\documentclass[reqno,12pt]{amsart}
\usepackage[utf8]{inputenc}
\usepackage{amsmath,amssymb,amsthm,mathrsfs,color,times,textcomp,verbatim,yfonts}
\usepackage{tikz}
\usepackage{tikz-3dplot}
\usepackage{graphicx}
\usepackage{subcaption}
\usepackage[export]{adjustbox}
\usepackage{esint}
\usepackage{xcolor}
\usepackage{array}
\usepackage[colorlinks=true]{hyperref}
\hypersetup{urlcolor=blue, citecolor=red, linkcolor=blue}
\usepackage[square,numbers]{natbib}
\usepackage{geometry}
\theoremstyle{plain}
\newtheorem{theorem}{Theorem}
\newtheorem{proposition}[theorem]{Proposition}

\usepackage{graphicx}
\theoremstyle{definition}
\newtheorem{definition}[theorem]{Definition}

\newtheorem{remark}[theorem]{Remark}

\title[nodal solutions to the Yamabe equation]{Planar doubling nodal solutions to the Yamabe equation with maximal rank}
\author{Yuanli Li}
\address{State Key Laboratory of Mathematical Sciences, Academy of Mathematics and Systems Science, Chinese Academy of Sciences, Beijing 100190, China.}
\email{liyuanli@amss.ac.cn}
\author{Liming Sun}
\address{State Key Laboratory of Mathematical Sciences, Academy of Mathematics and Systems Science, Chinese Academy of Sciences, Beijing 100190, China.}
\email{lmsun@amss.ac.cn}
\date{\today}

\begin{document}

\begin{abstract}
This article constructs two families of nodal solutions to the Yamabe equation, each concentrating along two planar circles. One family is conformally equivalent to the one previously obtained by Medina–Musso. The second family is a twisted variant of the first; it is new and is derived from ansatzes that are not Kelvin invariant, in contrast to a standard assumption in earlier works. In addition, in dimension 3 these solutions attain maximal rank. By means of a continuous family of conformal transformations, we then analyze the interaction of the two circles, which display a crossing phenomenon reminiscent, in some sense, of leap-frogging behavior in vortex dynamics.
\end{abstract}

\maketitle
%%%%%%%%%%%%%%%%%%%%%%%%%%%%%%%%%%%%%%%%%%%%%%%%%%%%%%%%%%
\section{Introduction}
Consider the problem
\begin{equation}\label{main-Yamabe}
    -\Delta u=\gamma|u|^{p-1}u\quad \text{in}\quad \mathbb{R}^n,\quad \gamma:=\frac{n(n-2)}{4},\ u\in\mathcal D^{1,2}(\mathbb{R}^n);
\end{equation}
where $n\geq3$, $p=\frac{n+2}{n-2}$, and $\mathcal D^{1,2}(\mathbb{R}^n)=\{u\in L^{p+1} (\mathbb{R}^n):\nabla u\in L^2(\mathbb{R}^n)\}$. It is easy to see that any solution $u$ is a critical point of the energy functional
\begin{align}\label{def:E}
    \mathcal{E}(u)=\frac12\int|\nabla u|^2-\frac\gamma{p+1}\int |u|^{p+1}.
\end{align}

When $u>0$, problem \eqref{main-Yamabe} arises in the classical Yamabe problem or extremal equation for Sobolev inequality. For positive or sign-changing $u$ problem \eqref{main-Yamabe} corresponds to the steady state of the energy-critical focusing nonlinear wave equation
\[\partial_{tt}u-\Delta u-\gamma|u|^{p-1}u=0,\quad  \text{on }\mathbb R\times\mathbb R^{n},\]
These are classical problems that have attracted the attention of many researchers, for instance \cite{Duy12,Duy11,Duy16,Kenig08}. 

The positive solutions to \eqref{main-Yamabe} are well understood, thanks to classical works by Aubin, Talenti and Caffarelli–Gidas–Spruck (see \cite{AUBIN1976,CGS1989,TALENTI1976}), in which they prove that all positive solutions can be written as 
\begin{align}\label{def:U}
    U_{\mu,\xi}(y)=\left(\frac{2\mu}{\mu^2+|y-\xi|^2}\right)^{\frac{n-2}2}.
\end{align}
These functions are called \textit{Aubin-Talenti bubbles}, which serves as building blocks in the following construction. \par

In contrast, much less is known about sign-changing solutions. In 1986, Ding established the existence of such solutions invariant under $O(2)\times O(n-2)$ using the Ljusternik-Schnirelman category method in \cite{DING1986}, but the work did not present any further information on the solution.

Since 2010, several semi-explicit nodal solutions have been constructed. del Pino-Musso-Pacard-Pistoia \cite{DELPINO10} build many nodal solutions on $\mathbb{S}^n$ which exhibit concentration patterns of their energy densities along special manifolds of $S^n$. By stereographic projection,  \cite{DELPINO10} yields many solutions on $\mathbb{R}^n$. The construction does not include the 3-dimensional case. In a subsequent paper \cite{DELPINO2011}, they focus on a particular case which concentrates on a great circle. Their ansatzes consist of the standard bubble $U$ (cf.\,\eqref{def:U}) surrounded by sufficiently large $k$ negative scaled copies of $U$ arranged along the vertices of a $k$-regular polygon in $x_1x_2$-plane.
As $k\to \infty$, the nodal solution concentrates at the unit circle in $x_1x_2$-plane.  It is called the \textit{de-singularization of equator} or named as \textit{crown solution} in \cite{DELPINO2011}. It adopts the inner-outer gluing method and is able to cover the 3-dimensional case. Musso and Wei \cite{MUSSO15} show that the crown solution is non-degenerate in the sense of Duychaerts-Kenig-Merle (see \cite{Duy16}). Such three dimensional crown solution plays an important role in the Brezis-Nirenberg problem (see \cite{sun2025brezis,sun2025small}).

The crown solution does not have maximal rank, see the definition below for being maximal-rank. Two subsequent works find the maximal-rank nodal solutions.  The first one is Medina-Musso-Wei \cite{MEDINA2019}, which uses the ansatzes with the configuration of grids on $S^1\times S^1$ , it is the \textit{de-singularization of two equators}. Such solution has maximal rank in $\mathbb{R}^4$. By placing negative bubbles on $S^1$ in each $x_{2i-1}x_{2i}$-coordinate plane $1\leq i\leq n/2$, \cite{MEDINA2019} is able to construct maximal-rank nodal solutions for any even dimensions.

The second one is by \citet{MEDINA2021}, which uses the configuration of vertices of prisms. More precisely, the ansatz is
\begin{align}\label{medina-1}
    v_{*}(y)=U(y)-\sum^{k}_{j=1}\mu^{-\frac{n-2}2{}}U\bigg(\frac{y-\bar\xi_j}{\mu}\bigg)-\sum^{k}_{j=1}\mu^{-\frac{n-2}{2}}U\bigg(\frac{y-{\underline{\xi}_j}}{\mu}\bigg),
\end{align}
where 
\[\bar\xi_j=(re^{i\frac{2\pi j}{k}},\tau,0,...,0);\quad\underline{\xi}_j=(re^{i\frac{2\pi j}{k}},-\tau,0,...,0),\quad 1\leq j\leq k\]
and $r^2+\tau^2+\mu^2=1$. Note that $\bar\xi_j$ ($\underline{\xi}_j$) are on a circle above (below) the $x_1x_2$-plane, where $\tau>0$ denotes the distance of them to the $x_1x_2$-plane. As $k\to \infty$, they show $r\to 1$ and $\tau\to 0$ and  the ansatz concentrates on the unit circle in $x_1x_2$-plane with multiplicity two, thus it is  call a \textit{doubling of the equator} in the $x_1x_2$-plane. Such configuration is of maximal rank in $\mathbb{R}^3$.  Combining the de-singularization of equator and doubling of equator, the work \cite{MEDINA2021} is able to obtain maximal-rank nodal solutions for any odd dimensions.

Now let us be more precise about maximal rank and non-degeneracy. Recall that equation \eqref{main-Yamabe} is invariant under the M\"obius transform. To be more specific, the M\"obius group $\text{Mob}(n)$ is generated by the following transforms:
\begin{itemize}
    \item translation, $T_a(x)=x+a,\text{ where } a\in \mathbb{R}^n$;
    \item scaling, $\Lambda_\lambda(x)=\lambda x,\text{ where } \lambda\in\mathbb R^+$;
    \item inversion (with respect to the unit ball), $J(x):x\to x/|x|^2$ ;
    \item orthogonal transform, $R_O(x)=Ox,\text{ where } O\in \mathcal{O}(n)$ the orthogonal group.
\end{itemize}
Also recall that the M\"obius group $\text{Mob}(n)$, has the dimension of $\frac12(n+1)(n+2)$. 

If $\Phi\in \text{Mob}(n)$ and $u\in \mathcal{D}^{1,2}(\mathbb{R}^n)$, denote $\Phi_*(u)=|\det \Phi'(x)|^{\frac{n-2}{2n}}u(\Phi(x))$. In particular, if $\Phi=J$, $J_*(u)$ has another name Kelvin transformation and denoted as $K(u)=|x|^{2-n}u(x/|x|^2)$. If $Q$ solves \eqref{main-Yamabe}, so does $\Phi_*(Q)$. Differentiating these family of solutions near $Q$ generates the vector space
\begin{equation*}
T_Q(\text{Orb}_Q(\text{Mob}(n)))=
\text{span}\left\{
\begin{aligned}
    &(2-n)x_jQ+|x|^2\partial_jQ-2x_jx\cdot \nabla Q,\ \partial_jQ,\ 1\leq j\leq n\\
    & x_i\partial_jQ-x_j\partial_iQ,\ 1\leq i<j\leq n,\ \frac{n-2}2Q+x\cdot\nabla Q
\end{aligned}
\right\}.
\end{equation*}
It is evident that $T_Q(\text{Orb}_Q(\text{Mob}(n)))\subset \mathcal{Z}_Q$
where $\mathcal{Z}_Q$ is the kernel of the linearized operator $L_Q:=-\Delta-\gamma p|Q|^{p-2}Q$ around $Q$,
\begin{align*}
\mathcal{Z}_Q:=\{f\in \mathcal{D}^{1,2}(\mathbb{R}^n): L_Q f=0\}.
\end{align*}
\begin{definition}
    For a solution $Q$, if 
    \begin{align*}
        \textbf{Rank}(Q):=\dim T_Q(\text{Orb}_Q(\text{Mob}(n)))=\frac{(n+1)(n+2)}2,
    \end{align*}
then it is said to be \textit{of maximal rank}. If $\mathcal{Z}_Q=T_Q(\text{Orb}_Q(\text{Mob}(n)))$, then $Q$ is said to be \textit{non-degenerate}.
\end{definition}

For instance, consider the case in which $Q=U$, the standard bubble, it is non-degenerate, but not of maximal rank. More precisely, $\mathcal{Z}_U=\text{span}\{Z_1,\cdots, Z_n,Z_{n+1}\}$ where 
\[Z_{n+1}=\frac{n-2}2U+x\cdot\nabla U,\quad Z_j=\partial_jQ,\ j=1,...,n,\]
they form an orthogonal basis for the solution space of the operator $\Delta+pU^{p-1}$. Thus $\textbf{Rank}(U)=n+1$.

%As one may have noticed before, this is exactly a set of basis for $T_U(\text{Orb}_U(\text{Mob}(n)))$. 

%In \cite{MEDINA2021, MEDINA2019}, authors proved that the two families of sign-changing solutions are of maximal rank in even and odd dimensions, respectively, while the solution constructed by \cite{DELPINO2011} is not of maximal rank, as was shown in \cite{MUSSO15}.
%Two subsequent works find the maximal-rank nodal solutions. The first one is \cite{MEDINA2019}, which uses the ansatzes with the configuration of grids on $S^1\cup S^1$ , it is the desingularization of the Clifford torus. The second one is \citet{MEDINA2021}, which uses the configuration of vertices of prisms, it is called the doubling of the equitorial, see \citet{DELPINO10, DELPINO2011, MEDINA2021, MEDINA2019}. In particular, \citet{DELPINO2011} constructed a solution with $D_k\times O({n-2})$ invariance, while \citet{MEDINA2019} constructed a solution with the invariance under $D_k\times D_l\times O(n-4)$, where $D_j$ is the dihedral group. These groups present as the subgroup of the conformal group, i.e., the M\"obius group.\par

The motivation of this paper is twofold. First, we aim to construct new nodal solutions of maximal rank, with particular emphasis on identifying configurations that apply uniformly to both odd and even dimensions. Such a unified framework has been notably absent from previous constructions and is crucial for a deeper structural understanding of these solutions. Second, all existing ansatzes are Kelvin invariant, a symmetry that plays a central role in the inner–outer gluing method. While this invariance has been essential in all known constructions, it remains an open question whether it can be relaxed, and exploring this limitation is key to extending the scope of the method.

%To construct a solution of maximal rank, when the dimension is even, one may consider the configuration given by \cite{MEDINA2019}, when the dimension is odd, one can use that in \cite{MEDINA2021}. 

% In this paper, we give a configuration that works for all dimensions $n\geq 3$. In fact, as noted earlier, these constructions rely on the assumption of Kelvin invariance. In this article, we present a new planar configuration that does not possess Kelvin invariance yet still achieves maximal rank.

Our main theorem gives an affirmative answer to both questions. To state the main theorem, we first define the ansatzes
\begin{align}\label{def:u*m}
    u_{*,m}=U(y)-\sum^{k}_{j=1}\mu_m^{-\frac{n-2}2{}}U(\frac{y-\bar\xi_{j,m}}{\mu_m})-\sum^{k}_{j=1}\lambda_m^{-\frac{n-2}{2}}U(\frac{y-{\hat\xi_{j,m}}}{\lambda_m})
\end{align}
where 
\begin{align}\label{main-xi}
    \bar\xi_{j,m}=(r_me^{i\frac{2\pi j}{k}},0,...,0);\quad\hat\xi_{j,m}=(R_me^{i\frac{\pi (2j+m)}{k}},0,...,0),\ m=0,1;
\end{align}
\begin{theorem}\label{thm:main}
For $k$ sufficiently large, there exists a finite-energy solution $u$ to the Yamabe equation, which is of the form 
\[u_m(y)=u_{*,m}(1+o_k(1))\]
with $o_k(1)\to 0$ uniformly as $k\to \infty$. 
The parameters $\mu_m,r_m,\lambda_m, R_m$ are determined through the following relations: there exists a fixed positive number $\eta<1$ and some $\eta<l_m,t_m<\eta^{-1}$ such that the $\mu_m$ and $d_m:=R_m-r_m$ 
\[\mu_m=\frac{l_m^{\frac2{n-2}}}{k^2},\ d_m=\frac{t_m}{k^{\frac{n-3}{n-2}}},\ n\geq4;\quad \mu_m=\frac{l^2_m}{k^2\log^2k},\ d_m=\frac{t_m}{{\log k}},\ n=3;\]
and 
% \begin{align*}
%     s
% \end{align*}
\begin{align}\label{main-K}
    \lambda_m =\frac{\mu_m }{\mu_m^2+r_m^2};\quad R_m=\frac{r_m}{\mu_m^2+r_m^2}.
\end{align}
% Moreover, there exists a fixed positive number $\eta<1$ and some $\eta<l_m,t_m<\eta^{-1}$ such that the $\mu_m$ and $d_m:=R_m-r_m$ can be determined by 
%Here and $d_m:=R_m-r_m$ satisfy the relation
%\[\mu_m=\frac{l_m^{\frac2{n-2}}}{k^2},\ d_m=\frac{t_m}{k^{\frac{n-3}{n-2}}},\ n\geq4;\quad \mu_m=\frac{l^2_m}{k^2\log^2k},\ d_m=\frac{t_m}{{\log k}},\ n=3;\]
%Here $\eta$ is a fixed positive number, and the residue uniformly converges to 0 on any compact subset of $\mathbb{R}^n$.\par
Moreover, we have the $\text{\textbf{Rank}}(u_m)=4n-2$. In particular, the solutions are of maximal rank when $n=3$.
\end{theorem}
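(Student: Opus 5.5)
We follow the inner--outer gluing scheme of del Pino--Musso--Pacard--Pistoia and Medina--Musso, adapted to an ansatz that is not Kelvin invariant. We work in the class of functions invariant under the symmetries the ansatz $u_{*,m}$ actually has:
\begin{itemize}
\item rotation by $2\pi/k$ in the $x_1x_2$-plane;
\item the reflection $x_2\mapsto -x_2$ (for $m=1$ composed with a rotation by $\pi/k$);
\item evenness in each of $x_3,\dots,x_n$, or full $O(n-2)$ invariance in those variables.
\end{itemize}
Kelvin invariance is not available. In its place we use relation \eqref{main-K}: $(\lambda_m,\hat\xi_{j,m})$ is exactly the image of $(\mu_m,\bar\xi_{j,m})$ under inversion, i.e. $K\big(\mu_m^{-\frac{n-2}2}U(\frac{\cdot-\bar\xi}{\mu_m})\big)$ is the bubble at $\hat\xi$ with scale $\lambda_m$ (for $m=0$; for $m=1$ the outer ring is additionally rotated by $\pi/k$). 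Thus the outer ring is the Kelvin image of the inner ring, and the configuration is parametrized by $(\mu_m,r_m)$, equivalently $(l_m,t_m)$.

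\textbf{Reduction.} We write $u=u_{*,m}+\phi$ with
\[
\phi=\psi+\sum_j\big(\bar\eta_j\bar\phi_j+\hat\eta_j\hat\phi_j\big),
\]
where $\bar\phi_j,\hat\phi_j$ are inner corrections living in the rescaled variables near $\bar\xi_j,\hat\xi_j$, cut off by $\bar\eta_j,\hat\eta_j$, and $\psi$ is an outer correction solving a linear problem around $U$. This requires the non-degeneracy of $U$ and of the single bubble, and weighted $L^\infty$ norms like those in \cite{DELPINO2011,MEDINA2021}. The symmetries reduce all $2k$ inner problems to two: one at $\bar\xi_1$ and one at $\hat\xi_1$.

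Each inner problem is solvable up to projections onto $Z_1,\dots,Z_{n+1}$. Symmetry kills the tangential and out-of-plane components. What remains are the radial translation $Z_1$ and the dilation $Z_{n+1}$ at each ring, giving four Lyapunov--Schmidt coefficients. A fixed-point argument in the weighted spaces then produces $\phi$ with $\phi=o_k(1)u_{*,m}$, using error estimates for $E=\Delta u_{*,m}+\gamma|u_{*,m}|^{p-1}u_{*,m}$ of the type
\[
\|E\|_{**}\lesssim k^{-\frac{n-2}{2}\cdot(\cdots)}
\]
in the regime $\mu_m\sim k^{-2}$.

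\textbf{Reduced equations.} Since we lack Kelvin invariance, the four coefficients cannot a priori be reduced to two. We handle this as follows. Kelvin transform maps solutions to solutions and maps the ansatz to itself, up to relabeling via \eqref{main-K} and, when $m=1$, a rotation by $\pi/k$, which is also a symmetry of the class. So the reduced energy $F(\mu,r)$ of the full four-parameter problem is invariant under the corresponding involution. Alternatively, we would directly compute the four coefficients and show that, at leading order, they depend only on $(l,t)$ through two independent combinations, with the other two being $\pm$ those. The expansion contains the following interactions:
\begin{itemize}
\item interaction with $U$, of order $\mu^{\frac{n-2}2}$;
\item intra-ring interaction, $\sim\sum_{j}(\mu/|\bar\xi_1-\bar\xi_j|)^{n-2}$, of order $\mu^{n-2}k^{n-2}$, with a $\log k$ in $n=3$;
\item inter-ring interaction, $\sim (\mu\lambda)^{\frac{n-2}2}\sum_j|\bar\xi_1-\hat\xi_j|^{2-n}$, which for $d\gg 1/k$ behaves like $k\,\mu^{n-2} d^{3-n}$, or like $k\mu\log(1/d)$ for $n=3$.
\end{itemize}
Balancing these three terms yields the stated scalings $\mu=l^{2/(n-2)}k^{-2}$, $d=t\,k^{-(n-3)/(n-2)}$ (respectively with logs when $n=3$). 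The leading reduced map in $(l,t)$ then has a nondegenerate zero, and a degree argument gives a solution in $\eta<l_m,t_m<\eta^{-1}$.

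\textbf{Main obstacle.} We expect the main obstacle to be precisely this: showing that the four reduced coefficients collapse consistently, with the correct non-degenerate leading order, despite the absence of Kelvin symmetry. We would first try the variational route, using invariance of $\mathcal E$ under $J_*$ composed with the rotation to identify the reduced functional as symmetric. This reduces the problem to a critical point of a two-variable function.

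\textbf{Rank.} We compute $T_{u_m}(\mathrm{Orb})$ by testing the $\tfrac12(n+1)(n+2)$ generators against the symmetry. A generator vanishes on $u_m$ iff the corresponding one-parameter subgroup fixes $u_m$. Since $u_m\approx u_{*,m}$ and the relevant subgroups are rigidly determined by the two rings, the stabilizer is exactly the group of Möbius maps preserving both circles: rotations in $x_3,\dots,x_n$, giving $O(n-2)$ of dimension $\frac{(n-2)(n-3)}2$. Then
\[
\mathbf{Rank}=\tfrac{(n+1)(n+2)}2-\tfrac{(n-2)(n-3)}2=4n-2,
\]
which equals $10$ for $n=3$, the maximal value. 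Linear independence of the remaining generators would be checked by evaluating them near a bubble of each ring, where $u_m$ is close to $u_{*,m}$.
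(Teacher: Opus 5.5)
\paragraph{The missing twisted Kelvin invariance.}
You observe yourself that $K$ followed by the rotation by $\pi/k$ maps $u_{*,1}$ to itself. Yet you never impose this on the correction $\phi$, and you then leave the passage from four reduced coefficients to two as an unresolved ``main obstacle''. The missing idea, which is exactly the paper's key step, is to build the twisted Kelvin invariance $\mathcal K=\mathcal T\circ K$ into the fixed-point scheme: $\psi=\mathcal K\psi$ for the outer part and $\hat\phi_1=\mathcal K\bar\phi_1$ for the inner parts. This is compatible with your other symmetries; for instance $\mathcal K^2=\mathcal T^2$ is the $2\pi/k$ rotation.

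Without it your scheme already fails at the outer problem. None of the symmetries you list removes the dilation element $Z_{n+1}$ from the kernel of $\Delta+p\gamma U^{p-1}$, so $\Delta\psi+p\gamma U^{p-1}\psi=h$ is not solvable as you set it up. With $\mathcal K$-invariance of $h$, one has $I(\lambda)=\lambda^{\frac{n-2}{2}}\int hU(\lambda y)=I(\lambda^{-1})$, hence $\int hZ_{n+1}=I'(1)=0$. At the inner level, $\hat\phi_1=\mathcal K\bar\phi_1$ makes the problem at $\hat\xi_1$ the $\mathcal K$-image of the one at $\bar\xi_1$. Only $\bar c_1=\bar c_{n+1}=0$ then remain: two equations in $(l,t)$.

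Neither of your fallbacks closes this:
\begin{itemize}
\item Agreement of four coefficients at leading order does not let you solve an overdetermined system exactly.
\item A symmetric-criticality argument would need a reduced energy on a larger family (independent $(\mu,r,\lambda,R)$ and a free scale for $U$), an equivariant reduction, and a proof that the reduced equations are its gradient. As written, $F(\mu,r)$ on the already symmetric two-parameter family yields only two conditions.
\end{itemize}
A minor point: the plain reflection $x_2\mapsto -x_2$ already preserves the $m=1$ configuration, whereas composing it with a $\pi/k$ rotation does not.

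\paragraph{The balancing heuristic.}
Your heuristic is also not correct. Balancing the three interaction sizes you list fixes $\mu\sim k^{-2}$, but for $n\ge 4$ it would force $d\sim k^{-1}$, contradicting $d\gg 1/k$.

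In the paper, $d$ is determined by the $Z_1$ equation through a cancellation. Once the $Z_{n+1}$ equation fixes $l$, the radial forces on $\bar\xi_1$ from $U$ and from its own ring cancel up to the relative factor $\frac{1-r^2}{1+r^2}\approx\frac d2$, a consequence of \eqref{main-K}. The leftover $O(d\,\mu^{n/2})$ is balanced by the radial derivative of the inter-ring interaction. That derivative has size $\mu^{\frac n2}\lambda^{\frac{n-2}2}k\,d^{2-n}$, and its sign comes from the $-d$ in $(\bar\xi_1-\hat\xi_j)_1$. The inter-ring term is lower order in the $Z_{n+1}$ equation.

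This gives $d^{n-1}\sim k^{3-n}$, i.e.\ $d\sim k^{-\frac{n-3}{n-1}}$, and $d\sim(\log k)^{-1/2}$ when $n=3$. That is the scaling encoded in the powers $k^{-n-\frac{n-3}{n-1}}$ and $k^{-3}\log^{-7/2}k$ of the paper's reduced $Z_1$ equation. The exponents printed in the statement appear to be a misprint, so your claim that ``balancing yields the stated scalings'' should not be taken at face value.

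\paragraph{Rank.}
Your rank count, $\dim\mathrm{Mob}(n)$ minus the dimension of the stabilizer (solving in the $O(n-2)$-invariant class), is at the same level of detail as the paper's appeal to Medina--Musso's appendix.
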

% {\color{blue} change $\hat \xi_j$ to $\hat \xi_j(s)$ 

% We need to picture to illustrate $m=0$ and $m=1/2$.
% }
%In the following context, we denote $d_m:=R_m-r_m$, and $\mu_m=k^{-2}\hat l_m^{\frac2{n-2}}$.

\begin{remark}
 From the relationship of $\lambda_m, \mu_m,R_m,r_m$, we obtain an equation for $r_m$:
\[r^3_m+d_mr^2_m+(\mu_m^2-1)r_m+d_m\mu^2_m=0.\]
Then we see that $r_m=1-\frac12(\mu_m^2+d_m)+O(\mu_m^3)+O(d_m^2)$. Thus $r_m^2+\mu_m^2<1$ and consequently $\lambda_m^2+R_m^2=1/(r_m^2+\mu_2)>1$. 
% If we denote $\lambda_m=k^{-2}\hat l_m^{\frac2{n-2}}$ for $n\geq 4$ and $\lambda_m=\hat l_m^2/(k^2\log^2k)$ if $n=3$, then
% \begin{equation}
%     \hat l_m=l_m(r_m^2+\mu_m^2)^{\frac{2-n}2}=l_mr_m^{2-n}(1+(\mu_m/r_m)^2)^{\frac{2-n}2}=l_m(1+o_k(1)).
% \end{equation}
\end{remark}
Let us make some discussion about the theorem. Note that we construct two families of nodal solutions to the Yamabe equation, $m=0$ and $m=1$. For both of them, they have two sets of highly concentrated negative bubbles, one locates at an inner circle of radius $r_m$ and the other one locates at outer circle  $R_m$. These two circles are on the same $x_1x_2$-plane. As $k\to \infty$, One can see that the ansatzes concentrate on the unit circle in $x_1x_2$-plane with multiplicity two. It is the \textit{planar doubling of the equator}. We emphasize that it is different from the configuration in \eqref{medina-1} whose two circles are not on the same plane. 
%In higher dimensions, one can combine this planar doubling in each $x_{2i-1}x_{2i}$-coordinate plan $1\leq i\leq \lfloor n/2\rfloor$ to get maximal rank nodal solutions, which unify the odd and even dimensional cases.

% The ansatzes of the solutions
% $$u_{m,*}(y)=U(y)-\sum^{k}_{j=1}\mu_m^{-\frac{n-2}2{}}U(\frac{y-\bar\xi_{j,m}}{\mu_m})-\sum^{k}_{j=1}\lambda_m^{-\frac{n-2}{2}}U(\frac{y-{\hat\xi_{j,m}}}{\lambda_m}),$$ We call these approximations the ansatzes of the solutions, which are shown in Figure \ref{fig:1}. 

Two families of nodal solutions have distinctive features. We point out that the ansatzes with $m=0$ are Kelvin invariant, the ansatzes with $m=1$ \textit{are not} because the extra rotation of angle $\pi/k$ (see Figure \ref{fig:1}). In fact, condition \eqref{main-K} exactly implies that two circles will be swapped to each other under the transform $\mathcal K$. Consequently, the bubbles on inner circle is mapped to ones on outer circle under Kelvin transformation when $m=0$, while not for $m=1$. Nevertheless, the $m=1$ cases retain certain symmetry. We define the rotation $$\mathcal{T}=\text{diag}\{e^{-i\frac{\pi}{k}},1,...,1\},\ \mathcal{T}f(x):=f(\mathcal{T}x),$$ also denote $K$ the standard Kelvin transform, then the solution \textit{is} invariant under the linear transform $\mathcal K:=\mathcal{T}\circ K=K\circ\mathcal{T}.$
%To ensure that such symmetry is satisfied, the relation of $\mu,\lambda, d, r$ is imposed on the approximate solution. 

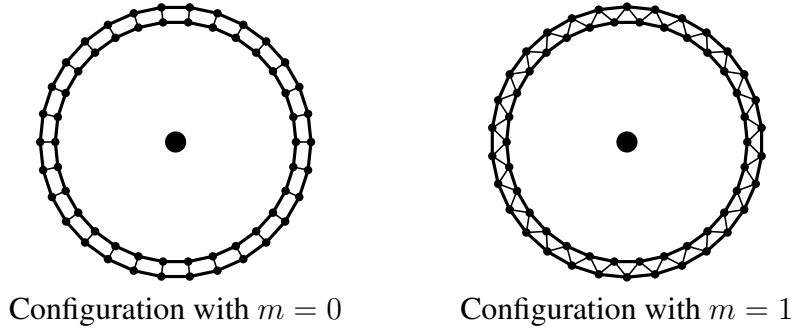
\begin{figure}
\begin{center}
\begin{tikzpicture}[scale=2.0, line cap=round, line join=round, thick]
  
  % ====== 左侧：顶点共线的同心十边形 ======
  \begin{scope}[xshift=-1.5cm]
    % 参数设置
    \def\n{30}                % 边数
    \def\angle{12}            % 十边形的中心角
    \def\rA{0.8}              % 内十边形半径
    \def\rB{0.9}              % 外十边形半径
    
    % 绘制中心点
    \coordinate (O) at (0,0);
    \fill[black] (O) circle (2pt);
    
    % 生成内十边形顶点
    \foreach \i in {1,...,\n} {
      \pgfmathsetmacro{\theta}{(\i-1)*\angle}
      \coordinate (A\i) at (\theta:\rA);
    }
    
    % 生成外十边形顶点
    \foreach \i in {1,...,\n} {
      \pgfmathsetmacro{\theta}{(\i-1)*\angle}
      \coordinate (B\i) at (\theta:\rB);
    }
    
    % 绘制内十边形（黑色实线）
    \draw[black, line width=1.2pt] (A1) 
      \foreach \i in {2,...,\n} {-- (A\i)}
      -- cycle;
    
    % 绘制外十边形（黑色实线）
    \draw[black, line width=1.2pt] (B1)
      \foreach \i in {2,...,\n} {-- (B\i)}
      -- cycle;
    
    % 绘制连接内、外十边形对应顶点的线（黑色细线）
    \foreach \i in {1,...,\n} {
      \draw[black, line width=0.6pt] (A\i) -- (B\i);
    }
    
    % 标记顶点
    \foreach \i in {1,...,\n} {
      \fill[black] (A\i) circle (0.8pt);
      \fill[black] (B\i) circle (0.8pt);
    }

    \node[above] at (0, \rB -2.2) {Configuration with $m=0$};
  \end{scope}
  
  % ====== 右侧：旋转的同心十边形 ======
  \begin{scope}[xshift=1.5cm]
    % 参数设置
    \def\n{30}                % 边数
    \def\angle{12}            % 十边形的中心角
    \def\rotateAngle{6}      % 旋转角度 = 2π/20弧度 = 18度
    \def\rA{0.8}              % 内十边形半径
    \def\rB{0.9}              % 外十边形半径
    
    % 绘制中心点
    \coordinate (O) at (0,0);
    \fill[black] (O) circle (2pt);
    
    % 生成内十边形顶点（未旋转）
    \foreach \i in {1,...,\n} {
      \pgfmathsetmacro{\theta}{(\i-1)*\angle}
      \coordinate (A\i) at (\theta:\rA);
    }
    
    % 生成外十边形顶点（旋转18度）
    \foreach \i in {1,...,\n} {
      \pgfmathsetmacro{\theta}{(\i-1)*\angle + \rotateAngle}
      \coordinate (B\i) at (\theta:\rB);
    }
    
    % 绘制内十边形（黑色实线）
    \draw[black, line width=1.2pt] (A1) 
      \foreach \i in {2,...,\n} {-- (A\i)}
      -- cycle;
    
    % 绘制外十边形（黑色实线）
    \draw[black, line width=1.2pt] (B1)
      \foreach \i in {2,...,\n} {-- (B\i)}
      -- cycle;
    
    % 绘制连接内、外十边形顶点（交替连接，形成五边形星形图案）
    \foreach \i in {1,...,\n} {
      \pgfmathsetmacro{\next}{mod(\i, \n) + 1}  % 下一个顶点的索引
      \draw[black, line width=0.6pt] (A\i) -- (B\i);
      \draw[black, line width=0.6pt] (A\next) -- (B\i);
    }
    
    % 标记顶点
    \foreach \i in {1,...,\n} {
      \fill[black] (A\i) circle (0.8pt);
      \fill[black] (B\i) circle (0.8pt);
    }

    \node[above] at (0, \rB- 2.2) {Configuration with $m=1$};
  \end{scope}
  
  % 添加标题
   % \node[above, align=center] at (1.5,0,4) {\textbf{Fig. 1  }};
  
\end{tikzpicture}
\end{center}
    \caption{The planar twin solutions}
    \label{fig:1}
\end{figure}

The construction can be extended to higher even dimensions to produce some more solutions of maximal rank. one can analogously place bubbles in the $x_{2i-1}x_{2i}$-plane, where $i\leq\lfloor n/2\rfloor$. One can get maximal-rank solution for even and odd dimensions at the same time by using this approach. 
%We will address this topic in the last section.

% \begin{remark}
% Nota bene, the solutions we constructed above are of maximal rank, but whether they are maximal is unknown. To be specific, we need to introduce the notion of non-degeneracy. A solution $Q$ to the Yamabe equation is said to be \textit{non-degenerate} if the kernel of the linearized operator at $Q$, namely $L=\Delta+\gamma p|Q|^{p-2}Q$, is spanned entirely by vectors from the space $S(Q)$.
% A solution is called maximal if it is non-degenerate and of maximal rank. It should be noted that the family of solutions constructed by the authors is indeed of maximal rank, but it is not certain whether they are maximal. In fact, whether the solutions constructed in the main theorem, as well as the maximal rank solutions constructed in section 7, are non-degenerate remains an open question. However, the authors conjecture that these solutions are maximal.

% \end{remark}

We also explore the conformal transformation of the solution constructed in Theorem \ref{thm:main}. By making certain translation and Kelvin transformation at point $a\,\mathbf{e}_3$ which belongs to
$x_3$-axis, we can get two families of new ansatzes whose two circles are above and below $x_1x_2$-plane respectively. These two circles will enlarge/shrink and ascend/descend along the movement of $a$ (cf. Figure \ref{fig:2}). As $a$ run from $-\infty$ to $+\infty$, two circles will exhibit a crossing phenomenon. This resembles the leap-frogging in vortex dynamics in some sense. 

% When $a=\pm 1$, the two circles have the same radius and are at the same distance to the $x_1x_2$-plane. One can see that the conformal counterpart of the family corresponds to $m=0$ has exact configuration to the one in \cite{MEDINA2021}. This suggests that the family of $m=0$ might be conformally equivalent to the one in \cite{MEDINA2021}. However, the construction in \cite{MEDINA2021} only provides the uniqueness of the nodal solution in its neighborhood, we are not sure the conformal counterpart of the family of $m=0$ lies in the neighborhood. At least, the family corresponds to $m=1$ can not be conformally equivalent to the one in \cite{MEDINA2021}. Whether the solutions in Theorem \ref{thm:main} are non-degenerate remain an open problem.

When $a=\pm 1$, the two circles have the same radius and are at the same distance from the $x_1x_2$-plane. Surprisingly, one observes that the conformal counterpart of the family corresponding to $m=0$ has exactly the same configuration as \eqref{medina-1} in \cite{MEDINA2021}. Since both the construction here and there are using contraction mapping theorem in a small neighborhood of $u_{*,0}$ and its conformal counterpart, when $k$ is large enough, two families of nodal solution are conformally equivalent.
%However, the construction in \cite{MEDINA2021} only ensures the uniqueness of the nodal solution in a neighborhood, and it is unclear whether the conformal change of the $m=0$ family lies inside this neighborhood. 
On the other hand, the family with $m=1$ cannot be conformally equivalent to the one in \cite{MEDINA2021} (cf.\,Theorem \ref{thm:2}). 

In fact, when considered in the sense of conformal equivalence, the motivation for constructing such an ansatz is even clearer. In the case where $a=\pm1$, the Kelvin invariance of the ansatz $m=0$ exactly corresponds to the evenness with respect to $x_3$ for the family of Musso-Medina. Meanwhile, the symmetry of the ansatz $m=1$ corresponds to a reflection across $x_1x_2$-plane and combined a rotation in that plane. Indeed, there is a similar  geometric transformation called rotoreflection (also known as improper rotation) in crystallography. 

% in the point group theory of crystals, there is a symmetry called rotoreflection, the counterpart of mirror reflection is considered to be retoref of crystals, lection, which corresponds to the novel symmetry mentioned above under the inverse map.
% The method we adopt here in the proof the main theorem is Lyapunov-Schmidt reduction, which is a classical finite-dimensional reduction. Following the process performed by \cite{DELPINO2011,MEDINA2021,MEDINA2019}, the process is clear. First, constructing a family of approximate solutions, known as ansatzes; then solving the small error terms by using linear theory developed by \cite{DELPINO2011} and fixed point theorem, and the problem is reduced to a equation; finally, solving the finite equation system by asymptotic expansion.

The method we adopt here in the proof the main theorem is inner-outer gluing, which has been established by previous work \cite{DELPINO2011,MEDINA2021,MEDINA2019}. This method entails separating the problem into two components: an inner problem and an outer problem. Solving each requires imposing discrete symmetries on the ansatzes, which have conventionally been chosen as even symmetry, Kelvin invariance, and discrete rotation invariance in the previous work. Our work shows that the Kelvin invariance can be relaxed to some extent. Determining whether the solutions in Theorem \ref{thm:main} are non-degenerate remains an open question.

In the following sections, the argument is organized as follows. In Section 2, we will give a formal calculation to demonstrate construction of ansatzes, and in Section 3, we analyzed the conformal properties, and compared that to the previous works. The rigorous proof starts from Section 4, where a first-step approximation is established. In Section 5, we establish the symmetry decomposition of the error term, which is needed for the following section. In Section 6, we find the parameter to finish the proof of main theorem.  
\section{Conformally equivalent counterparts of the solutions}

%As we have pointed out that, the two solutions $u_0,u_1$ constructed in Theorem \ref{thm:main} is based on two planar circles, while the solution $v_0$ constructed by \cite{MEDINA2021} and the solution $v_1$ as pointed out in \eqref{medina} are based on two circles not in the same plane. Although the configuration seems to be different, it might be possible that a conformal change of nodal solution in Theorem \ref{thm:main} can be the one in \cite{MEDINA2021}.  We must exclude this case. Therefore, 
Before diving into the proof of Theorem \ref{thm:main}, we shall exhibit some conformal changes of the solutions and explore the connection of them with the solution constructed by \cite{MEDINA2021} in this section. 
\begin{theorem}\label{thm:2}
     The family of solutions  $u_1$ constructed in Theorem \ref{thm:main} is not conformally equivalent to $u_0$ there or the one in \cite{MEDINA2021} when $k$ is large enough. 
     %That is, there is no M\"obius transform $\Phi$ such that $\Phi_*(u_1)=u_0$ or . %Neither $u_0$ and $u_1$ are conformally equivalent to $v_0$ constructed in \cite{MEDINA2021}.
\end{theorem}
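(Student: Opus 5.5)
The plan is to use conformal invariants built from the concentration set of the solution, i.e. the positions of the $2k$ highly concentrated negative bubbles, which a Möbius map must carry to each other. Suppose $\Phi\in\text{Mob}(n)$ satisfies $\Phi_*(u_1)=u_0$ (or $\Phi_*(u_1)=v$, the solution of \cite{MEDINA2021}).

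First we extract the configuration from the solution. By Theorem \ref{thm:main}, $u_m=u_{*,m}(1+o_k(1))$ uniformly. Hence the local maxima of $|u_m|$ of height comparable to $\mu_m^{-(n-2)/2}$ (resp. $\lambda_m^{-(n-2)/2}$) occur at points within $o(\mu_m)$ of $\bar\xi_{j,m}$ and $\hat\xi_{j,m}$. Since $\Phi_*$ maps a bubble to a bubble, and the concentration points are isolated on a scale much smaller than the spacing $\sim 1/k$, $\Phi$ must map the set $P_1$ of $2k$ bubble centers of $u_1$ onto the corresponding set $P_0$ (or the prism set of \eqref{medina-1}), up to errors $o(1/k)$ measured in the spherical metric. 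To make this robust one may instead work with the exact symmetry groups: each solution inherits the symmetries of its ansatz by the uniqueness of the fixed point in the symmetric class, and $\Phi$ would conjugate these groups.

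Next we compare invariants. The cleanest invariant is the set of cross-ratios, or equivalently the combinatorial structure of nearest neighbours. In $u_0$ (and in the prism of \cite{MEDINA2021}) each inner bubble $\bar\xi_{j,0}$ has its closest outer bubble $\hat\xi_{j,0}$ at distance $d_0\sim k^{-(n-3)/(n-2)}$, aligned radially. In $u_1$ the outer bubbles are rotated by $\pi/k$, so each inner bubble has two equidistant nearest outer bubbles. We turn this into a conformal statement as follows. For four points, the quantity $|a-b||c-d|/(|a-c||b-d|)$ is Möbius invariant. Fix an inner bubble $a$ and consider the cross-ratios with its outer neighbours. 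For $u_0$ there is a unique minimizing outer neighbour, with a definite gap to the next one of order $(\pi/k)^2/d$ relative. For $u_1$ there are two tied minimizers. Since $\Phi$ is an approximate bijection, and since the scales are comparable (they are determined by the bubble heights, which $\Phi$ transforms by $|\det\Phi'|$), such a tie cannot be mapped to a strict gap once the gap exceeds the $o(1/k)$ error.

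The main obstacle is this quantitative step. The location error $o(\mu_m)$ coming from $o_k(1)$ must be smaller than the gap in the invariants, and $\Phi$ could distort distances strongly if it sends a point near the circle to infinity. To handle both issues, I would first normalize $\Phi$ using the symmetry: the $\mathbb{Z}_k$ rotation group must be conjugated to itself, which forces $\Phi$ to preserve the $x_3\ldots x_n$-axis circle structure, i.e. to lie in the stabilizer family containing the translations and Kelvin maps at $a\mathbf{e}_3$ discussed above. Then $\Phi$ acts on the two circles explicitly, and the angular offset between the inner and outer polygons is preserved by all these maps. This offset is $0$ for $u_0$ and for \eqref{medina-1}, and $\pi/k$ for $u_1$. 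That contradiction proves Theorem \ref{thm:2}.
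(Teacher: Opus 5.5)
Your final step is the same as the paper's proof. A Möbius map that respects the rotational structure about $\Pi=\{x_1=x_2=0\}$ acts as $(\rho,\theta,z)\mapsto(h(\rho,z),\pm\theta+c)$, where $h$ is a hyperbolic isometry. It therefore turns both polygons by the same angle, so the offset is invariant: $\pi/k\equiv-\pi/k \pmod{2\pi/k}$ for $u_1$, and $0$ for $u_0$ and for the Medina--Musso prism. The paper's proof consists only of this observation. It tacitly assumes that $\Phi$ respects the axis, after recalling that every Möbius map is a similarity or an inversion composed with one.

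\textbf{The gap.} The weight of the argument therefore rests on the reduction to axis-preserving $\Phi$, and there your proposal has a gap. From $\Phi_*u_1=u_0$ you only get $\mathrm{Stab}(u_0)=\Phi^{-1}\mathrm{Stab}(u_1)\Phi$. This says that $\Phi^{-1}R_{2\pi/k}\Phi$ is \emph{some} order-$k$ symmetry of $u_0$, not that it is a rotation about $\Pi$. So ``the $\mathbb{Z}_k$ rotation group must be conjugated to itself'' is asserted, not derived.

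Your tie-versus-gap detour cannot replace this step at the precision you state. In $u_0$ the nearest and next-nearest outer neighbours of $\bar\xi_{1,0}$ are at distances $d$ and $\sqrt{d^2+4Rr\sin^2(\pi/k)}\approx d+2\pi^2/(k^2d)$. Since $kd\to\infty$, this gap is $o(1/k)$, so an $o(1/k)$ localization cannot see it. For an unnormalized $\Phi$, possibly with its pole near the circles, you have no better control.

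\textbf{A way to close it.} Use only the symmetries the construction imposes.
\begin{enumerate}
\item $\mathrm{Stab}(u_m)$ is compact, because a divergent sequence in $\text{Mob}(n)$ sends any fixed $u\neq0$ weakly to $0$. By Cartan's theorem it fixes a point of $\mathbb{H}^{n+1}$.
\item It contains $R_{2\pi/k}$, the reflections $x_i\mapsto-x_i$ ($3\le i\le n$), and $K$ (resp.\ $\mathcal{K}=\mathcal{T}\circ K$). In the upper half-space their common fixed set is the single point $(0,1)$, which corresponds to $U$, so this is the unique fixed point. The same holds for the Medina--Musso solution, using $x_3\mapsto-x_3$ and its Kelvin invariance.
\item Conjugacy then forces $\Phi$ to fix $(0,1)$. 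Hence $\Phi_*U=U$, and $\Phi$ is an isometry of $S^n$ after stereographic projection. This also removes your worry about distortion.
\item On $S^n$ the $2k$ spikes form one orbit of the stabilizer. So the global maximum points of the spherical representative of $|u_m|$ cluster within $o(\mu)$ of the spike centres.
\item Replace each cluster by its circumcentre. This gives exact $2k$-point sets that $\Phi$ carries onto each other. The same symmetries make them exactly a regular $k$-prism for $u_0$ and for the Medina--Musso solution, and a $k$-antiprism with offset exactly $\pi/k$ for $u_1$; the offset is forced by $\mathcal{K}$.
\item The polygon sides ($\approx2\pi/k$) are much shorter than the separation between the two polygons ($\approx d$, resp.\ $\approx2\tau$). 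So the nearest-neighbour graph is the union of the two $k$-cycles, and an isometry maps polygons to polygons.
\item An isometry preserves the number of closest vertices of the other polygon. This number is one for a prism and two for an antiprism.
\end{enumerate}
This makes your tie-versus-gap idea exact and gives a complete version of the conclusion the paper states.
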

\begin{proof}
    Recall that any M\"obius transform $\Phi$ on $\mathbb{R}^n$ is either a similarity transformation $\Phi(x)=\lambda O x+a$ or a transformation in the form $\Phi=\lambda O(x-a)/|x-a|^2+b$ for some $\lambda>0$, $O\in \mathcal{O}(n)$ and $a,b\in \mathbb{R}^n$, for instance see \cite[p. 75]{reshetnyak2013stability}. In either case, the conformal transformation will rotate the two circles with the same angle. Therefore $u_1$ can not be conformally equivalent to $u_0$ the one in \cite{MEDINA2021} when $k$ is large enough.
    %By suitably modifying $a,b$, we can assume that $O$ is the identity matrix.

    % Using the above fact, it is easy to show that $u_0$ is not conformally equivalent $u_1$, $v_0$ is not conformally equivalent to $v_1$.
\end{proof}
    
%     In the first case, since any similar transformation keeps the two planar circles of $u_m$ in a same plane, thus it can not be mapped to $v_m$. In the latter case, pay attention to the even symmetry on $x_2,\cdots, x_n$, the dihedral symmetry in $x_1x_2$-plane and radial symmetry in $x_3,\cdots, x_n$ of $u_{m,*}$ and $v_{m,*}$ and the configuration of $v_{m,*}$, then $a,b$ must be on the $x_3$-axis and $O$ belong to the dihedral group. Without loss of generality, we assume that $O$ is the identity matrix. We abuse the notation by denoting $a,b\in \mathbb{R}$. Thus the only possible M\"obius transform  
%     \begin{align*}
%         \Phi_*(u_m)=\lambda^{\frac{n-2}{2}}|x-a\mathbf{e}_3|^{2-n}u_m\Big(\frac{\lambda(x-a\mathbf{e}_3)}{|x-a\textbf{e}_3|^2}+b\mathbf{e}_3\Big)
%     \end{align*}
%     Note that $u_{m}$ is approximated by $u_{m,*}$ and $v_m$ is approximated by $v_{m,*}$, then for the standard bubble $U$ in the origin, one has $\Phi_*(U)$ is very near to $U$. Since $u_m$ and $v_m$ all take a non-degenerate local maximum at $0$, we must have $\Phi_*(U)=U$. This implies that $b=a$ and $\lambda=1+|a|^2$. Thus 
Next, we will explore the conformally equivalent counterparts of $u_m$ for $m=0,1$. In order to keep two circles being concentric, we consider a family of conformal transform parametric by the points on the $x_3$-axis, namely
$$ \Phi_a=T_{a}\circ\Lambda_{(1+a^2)} \circ J\circ T_{-a}$$
where $\Lambda, T,J$ are the scaling, translation and inversion defined in the introduction respectively.
% The idea of defining $\mathbb{I}_a$ comes from pulling the solution to $\mathbb{S}^n$ by stereographic projection, and then change the north pole and map the solution back. The scaling $\Lambda_{(1+a^2)}$ and the translation $T_{-K(a)}$ is used to keep the standard bubble at the origin after the transformation.
As a simple calculation demonstrates, for an Aubin-Talenti bubble $U_{\mu,\xi}$, we have 
\[J_*(U_{\mu,\xi})=K(U_{\mu,\xi})=U_{\bar \mu,\bar \xi}\quad \text{where}\quad \bar \mu=\frac{\mu}{\mu^2+|\xi|^2},\quad \bar \xi=\frac{\xi}{\mu^2+|\xi|^2}.\]

By applying the relationship, now we may consider more carefully about the geometric picture of the configurations. After the conformal transformation $\Phi_a$, we get a new configuration, say $u_{*,m,a}(y)=(\Phi_a)_*(u_{*,m})$,
\[u_{*,m,a}(y)=U(y)-\sum_{j=1}^n\mu_{m,a}^{2-n}U\bigg(\frac{y-\bar\xi_{j,m,a}}{\mu_{m,a}}\bigg)-\sum_{j=1}^n\lambda_{m,a}^{2-n}U\bigg(\frac{y-\hat\xi_{j,m,a}}{\lambda_{m,a}}\bigg),\]
where
\[\bar \xi_{j,m,a}=\frac{(\bar\xi_{j,m}-a\mathbf{e}_3)(1+a^2)}{\mu_m^2+r_m^2+a^2}+a\mathbf{e}_3,\ \hat \xi_{j,m,a}=\frac{(\hat\xi_{j,m}-a\mathbf{e}_3)(1+a^2)}{\lambda_m^2+R_m^2+a^2}+a\mathbf{e}_3,\]
and $\mathbf{e}_3=(0,0,1,0,...,0)^T$, also
\[\mu_{m,a}=\frac{\mu_m(1+a^2)}{\mu_m^2+r_m^2+a^2};\ \lambda_{m,a}=\frac{\lambda_m(1+a^2)}{R^2_m+\lambda^2_m+a^2}.\]

To be more precise, 

$\bullet $ The center bubble $U(y)$ remains the same under $\Phi_a$.

$\bullet $ The inner-layer satellite bubbles $\mu_m^{\frac{2-n}2{}}U\big(\mu_m^{-1}({y-\bar\xi_{j,m}})\big)$ are changed to new bubbles $\mu_{m,a}^{\frac{2-n}2}U\big(\mu_{m,a}^{-1}{(y-\bar\xi_{j,m,a}})\big)$. These bubbles will concentrate on new points
\begin{align*}
    \bar\xi_{j,m,a}=\frac{r_m(1+a^2)}{(\mu_m^2+r_m^2+a^2)}\bigg(\cos\frac{\pi(2j+m)}{k}\mathbf{e}_1+\sin\frac{\pi(2j+m)}{k}\mathbf{e}_2+\frac {a(\mu_m^2+r_m^2-1)}{r_m(1+a^2)}\mathbf{e}_3\bigg).
\end{align*}
which lie on a new circle. For simplicity, we refer such circle as the \textit{bar circle}.

$\bullet $ The outer-layer satellite bubbles $\lambda_m^{\frac{2-n}2}U\big(\lambda_m^{-1}({y-\hat\xi_{j,m}})\big)$ are changed to new bubbles $\lambda_{m,a}^{\frac{2-n}2}U\big(\lambda_{m,a}^{-1}{(y-\hat\xi_{j,m,a}})\big)$. These bubbles will concentrate on new points
\begin{align*}
    \hat\xi_{j,m,a}=\frac{R_m(1+a^2)}{(\lambda_m^2+R_m^2+a^2)}\bigg(\cos\frac{\pi(2j+m)}{k}\mathbf{e}_1+\sin\frac{\pi(2j+m)}{k}\mathbf{e}_2+\frac {a(\lambda_m^2+R_m^2-1)}{R_m(1+a^2)}\mathbf{e}_3\bigg).
\end{align*}
which lie on a new circle. For simplicity, we refer such circle as the \textit{hat circle}.

There are two special cases. The first one is when the bar circle and hat circle has the same radius, the second one is when the bar circle and hat circle has the same distance to the $x_1x_2$-plane. We want to find the exact $a$ when these two special cases happen. 

For the first one, one must have
\begin{align}\label{radius-two}
    \frac{r_m(1+a^2)}{\mu_m^2+r_m^2+a^2}=\frac{R_m(1+a^2)}{\lambda_m^2+R_m^2+a^2}.
\end{align}
Using \eqref{main-K}, we can show that the above holds only when $a=\pm 1$.
% \begin{align*}
%     \frac{\mu}{\mu^2+r^2}=\frac{\lambda}{\lambda^2+R^2},\quad \frac{r}{\mu^2+r^2}=\frac{R}{\mu^2+R^2}
% \end{align*}

% The length of its projection on $x_1x_2$ plane is
% $\bar r_m=r_m{(\mu^2_m+r^2_m+a^2)}^{-1}.$
% Similarly, for the outer layer, we also have 
% $\bar R_m=R_m(\lambda^2_m+R^2_m+a^2_m)^{-1}=R_m((\mu_m^2+r^2_m)^{-1}+a^2)^{-1}.$ So far, we can obtain some interesting cases. When $\bar r_m=\bar R_m$, we have
% \[a=\pm \sqrt{\frac{r_mR_m}{R_m-r_m}\bigg(\frac{\lambda_m^2+R_m^2}{R_m}-\frac{\mu_m^2+r_m^2}{r_m}\bigg)}=\pm1.\]

%We check the location of the central bubble, that is $\textbf{0}$. 
For the second case, one must have
\begin{align}\label{height-two}
    \frac{a(\mu_m^2+r_m^2-1)}{\mu_m^2+r_m^2+a^2}=-\frac{a(\lambda_m^2+R_m^2-1)}{\lambda_m^2+R_m^2+a^2}.
\end{align}
Using \eqref{main-K}, we can show that the above holds only when $a=\pm 1$ or 0.

When \( a = \pm 1 \), both circles have the same radius and are equidistant from the \( x_1x_2 \)-plane. In this case, the conformal counterpart of the family with \( m = 0 \) exactly matches the configuration described in \cite{MEDINA2021}, suggesting that this family may be conformally equivalent to the one in that reference. However, since the construction in \cite{MEDINA2021} only guarantees the uniqueness of the nodal solution within its neighborhood, it remains uncertain whether the conformal counterpart of the \( m = 0 \) family actually lies in that neighborhood.

% Thus the only possible case $(\Phi_a)_*(u_m)=v_m$ is when $a=\pm 1$.
% %Moreover, the reader should be careful to compare the case $a=\pm1$ to that constructed by \cite{MEDINA2021}. 
% However, a simple calculation using the previously given coordinates shows that the distance of the two layers, say $\tau_{m}'$, has the order $\tau_{m,*}\sim d_m\sim k^{\frac{3-n}{n-2}}$. However, according to the main theorem in \cite{MEDINA2021}, as well as the tautological calculation for the twin solution of that one, we see that the distance is $\tau_m\sim k^{\frac{3-n}{n-1}}$, which is not even of the same order with $\tau_m'$. Therefore, $u_m$ can not be conformally equivalent to $v_m$ when $k$ is large enough.

% If we want the average of two layers to be exactly 0, then
% \[\frac{2}{a^2+1}=\frac1{a^2+r_m^2+\mu_m^2}+\frac1{a^2+R_m^2+\lambda^2}=\frac1{a^2+r_m^2+\mu_m^2}+\frac1{a^2+\frac1{r_m^2+\mu_m^2}},\]
% then
% \begin{equation*}
% \begin{aligned}
% &\frac1{a^2+r_m^2+\mu_m^2}+\frac1{a^2+\frac1{r_m^2+\mu_m^2}}-\frac2{a^2+1}\\
% =\ &\frac{(1-a^2)(r_m^2+\mu_m^2-1)^2}{(1+a^2)(1+a^2(r_m^2+\mu_m^2))(a^2+r_m^2+\mu_m^2)}\geq0,
% \end{aligned}
% \end{equation*}
% the equality holds iff $|a|=1$, which shows that the central bubble is still the center of the configuration.

Indeed we can explore the animation of these moving ansatzes along when $a$ goes from $-\infty$ to $+\infty$ (see Figure \ref{fig:2} for case $m=0$).. Recall that we refer to the circle containing $\hat\xi_{i,m,a}$ as the hat circle and the one containing \(\bar\xi_{i,m,a}\) as the bar one. As $a$ increases from $-\infty$ to 0, it is easy to see from \eqref{radius-two} that the radius of bar circle is increasing and hat circle is decreasing, and they have the same radius when $a=-1$. If $a
$ increase from 0 to $+\infty$, then bar circle is shrinking and hat circle is expanding, and they have the same radius when $a=1$. 
%We need to have a careful investigation on the movement on $x_3$ direction of both circles as $a$ increases.

%We now need to investigate the motion of both circles as $a$ increases.
To describe the altitude of these two circles, we consider the functions $\langle \hat\xi_{i,m,a},\mathbf{e}_3\rangle$ and $\langle \bar\xi_{i,m,a},\mathbf{e}_3\rangle$ in \eqref{height-two}. It is easy to see that they reach the critical points when $a=\pm\hat a_*$ and $a=\pm\bar a_*$ respectively, where 
\[\bar a_*=\sqrt{r^2_m+\mu^2_m};\ \hat a_*=\sqrt{R^2_m+\lambda^2_m}.\] A simple comparison shows that $\hat a_*>1>\bar a_*>0$.
%\[\hat a_*>1>\bar a_*>0.\]

Therefore, the two circles approach the $x_1x_2$-plane when $a\to\pm\infty$.
When  $a$  increases from  $-\infty$ , the bar circle ascend from $x_1x_2$-plane and achieve its maximum altitude when $a=\bar a_*$, after that the bar circle will descend and reach the plane when $a=0$. While, the hat circle descend from $x_1x_2$-plane and achieve its lowest altitude when $a=-\hat a_*$, after that the hat circle start to ascend and reach the plane when $a=0$. When $a=-1$, the two circles has the same distance to the plane. When $a$ increase 0 to $+\infty$, the change of altitude of two circles are mirroring to the $a<0$ case.

%expands and the hat circle shrinks until $a=0$. As we have calculated, the bar circle descends and the hat circle rises when $a\leq-\hat a_*$. Then the hat circle, as well as the bar circle, descends when $-\hat a_*<a\leq-\bar a_*$. Specially, when $a = -1$ , they become equal in size, forming the configuration discussed earlier.  After passing through  $a = -\hat a_*$ , the bar circle begins to ascend. Later, when  $a$  reaches 0, we obtain the configuration in our main theorem, at which point the hat circle starts to expand while the bar circle starts to shrink, and the vertical motion directions  of both circles stay the same. At  $a = \bar a_*$ , the bar circle starts to descend again, and at  $a = \hat a_*$ , the hat circle starts to rise again, continuing until  $a$  tends to  $+\infty$ .
Notably, during the increase of $a$, the bar circle passes through the hat circle, creating a crossing phenomenon. If we connect the case $a=\pm\infty$, then the crossing phenomenon will repeat. This resembles the leap-frogging in vortex dynamics in some sense.

 \begin{figure}[ht]
 \centering
 \tdplotsetmaincoords{60}{135} % 设置视角（倾斜视图）

 \begin{tikzpicture}[tdplot_main_coords, line join=round, line cap=round]
  
   % 全局参数调整，使五个图适合一行
   \def\scaleFactor{0.6}
   \def\spacing{3.0}
  
   \begin{scope}[xshift=-1.5*\spacing cm , scale=\scaleFactor]
     % 参数设置
     \def\n{20}            % 边数
     \def\angle{18}        % 十边形的中心角
     \def\rBottom{2.0}     % 下底半径
     \def\rTop{1.5}        % 上底半径（稍小，形成棱台）
     \def\height{0}      % 高度（较小值实现扁平效果）
    
     % 绘制下底面（十边形）
     \foreach \i in {0,...,\n} {
       \pgfmathsetmacro{\theta}{\i*\angle}
       \pgfmathsetmacro{\x}{\rBottom*cos(\theta)}
       \pgfmathsetmacro{\y}{\rBottom*sin(\theta)}
       \coordinate (B\i) at (\x, \y, 0); % B表示底部
     }
    
     % 绘制上底面（十边形）
     \foreach \i in {0,...,\n} {
       \pgfmathsetmacro{\theta}{\i*\angle}
       \pgfmathsetmacro{\x}{\rTop*cos(\theta)}
       \pgfmathsetmacro{\y}{\rTop*sin(\theta)}
       \coordinate (T\i) at (\x, \y, \height); % T表示顶部
     }
    
     % 绘制侧棱
     \foreach \i in {0,...,\n} {
       \draw[black, thick] (B\i) -- (T\i);
     }
    
     % 绘制下底面边
     \foreach \i in {0,...,\n} {
       \pgfmathsetmacro{\next}{mod(\i+1,\n+1)}
       \draw[black, thick] (B\i) -- (B\next);
     }
    
     % 绘制上底面边
     \foreach \i in {0,...,\n} {
       \pgfmathsetmacro{\next}{mod(\i+1,\n+1)}
       \draw[black, thick] (T\i) -- (T\next);
     }
    
     % 标记顶点
      \foreach \i in {0,...,\n} {
       \draw[black, line width=0.6pt, fill=white] (B\i) circle (3pt);
       \draw[black, line width=0.6pt, fill=black] (T\i) circle (3pt);
     }
    
     \node[below, align=center] at (0,0,-1.5) {$a=-\infty$};
   \end{scope}

   % ====== 图1：扁平的十方棱台 ======
 \begin{scope}[xshift=-0.5*\spacing cm, scale=\scaleFactor]
     % 参数设置 - 与图1相反
     \def\n{20}            % 边数
     \def\angle{18}        % 十边形的中心角
     \def\rBottom{1.8}     % 下底半径（小）
     \def\rTop{2.0}        % 上底半径（大）
     \def\height{0.3}      % 高度（较小值实现扁平效果）
    
     % 绘制下底面（十边形）
     \foreach \i in {0,...,\n} {
       \pgfmathsetmacro{\theta}{\i*\angle}
       \pgfmathsetmacro{\x}{\rBottom*cos(\theta)}
       \pgfmathsetmacro{\y}{\rBottom*sin(\theta)}
       \coordinate (B\i) at (\x, \y, 0); % B表示底部
     }
    
     % 绘制上底面（十边形）
     \foreach \i in {0,...,\n} {
       \pgfmathsetmacro{\theta}{\i*\angle}
       \pgfmathsetmacro{\x}{\rTop*cos(\theta)}
       \pgfmathsetmacro{\y}{\rTop*sin(\theta)}
       \coordinate (T\i) at (\x, \y, \height); % T表示顶部
     }
    
     % 绘制侧棱
     \foreach \i in {0,...,\n} {
       \draw[black, thick] (B\i) -- (T\i);
     }
    
     % 绘制下底面边
     \foreach \i in {0,...,\n} {
       \pgfmathsetmacro{\next}{mod(\i+1,\n+1)}
       \draw[black, thick] (B\i) -- (B\next);
     }
    
     % 绘制上底面边
     \foreach \i in {0,...,\n} {
       \pgfmathsetmacro{\next}{mod(\i+1,\n+1)}
       \draw[black, thick] (T\i) -- (T\next);
     }
    
     % 标记顶点
      \foreach \i in {0,...,\n} {
       \draw[black, line width=0.6pt, fill=black] (B\i) circle (3pt);
       \draw[black, line width=0.6pt, fill=white] (T\i) circle (3pt);
     }
    
     \node[below, align=center] at (0,0,-1.5) {$a=-2$};
   \end{scope}

   % ====== 图2：扁平的十方棱柱 ======

   \begin{scope}[xshift=0.5*\spacing cm, scale=\scaleFactor]
     % 参数设置 - 与图2相同
     \def\n{20}            % 边数
     \def\angle{18}        % 十边形的中心角
     \def\radius{2.0}     % 底面半径
     \def\height{0.4}      % 高度（较小值实现扁平效果）
    
     % 绘制下底面（十边形）
     \foreach \i in {0,...,\n} {
       \pgfmathsetmacro{\theta}{\i*\angle}
       \pgfmathsetmacro{\x}{\radius*cos(\theta)}
       \pgfmathsetmacro{\y}{\radius*sin(\theta)}
       \coordinate (B\i) at (\x, \y, 0);
     }
    
     % 绘制上底面（十边形）
     \foreach \i in {0,...,\n} {
       \pgfmathsetmacro{\theta}{\i*\angle}
       \pgfmathsetmacro{\x}{\radius*cos(\theta)}
       \pgfmathsetmacro{\y}{\radius*sin(\theta)}
       \coordinate (T\i) at (\x, \y, \height);
     }
    
     % 绘制侧棱
     \foreach \i in {0,...,\n} {
       \draw[black, thick] (B\i) -- (T\i);
     }
    
     % 绘制下底面边
     \foreach \i in {0,...,\n} {
       \pgfmathsetmacro{\next}{mod(\i+1,\n+1)}
       \draw[black, thick] (B\i) -- (B\next);
     }
    
     % 绘制上底面边
     \foreach \i in {0,...,\n} {
       \pgfmathsetmacro{\next}{mod(\i+1,\n+1)}
       \draw[black, thick] (T\i) -- (T\next);
     }
    
     % 标记顶点
      \foreach \i in {0,...,\n} {
       \draw[black, line width=0.6pt, fill=black] (B\i) circle (3pt);
       \draw[black, line width=0.6pt, fill=white] (T\i) circle (3pt);
     }
    
     \node[below, align=center] at (0,0,-1.5) {$a=-1$};
   \end{scope}

   % ====== 图3：斜视的同心圆 ======
   \begin{scope}[xshift=1.5*\spacing cm, scale=\scaleFactor]
     % 参数设置
     \def\n{20}            % 边数
     \def\angle{18}        % 十边形的中心角
     \def\rBottom{2.0}     % 下底半径
     \def\rTop{1.5}        % 上底半径（稍小，形成棱台）
     \def\height{0}      % 高度（较小值实现扁平效果）
    
     % 绘制下底面（十边形）
     \foreach \i in {0,...,\n} {
       \pgfmathsetmacro{\theta}{\i*\angle}
       \pgfmathsetmacro{\x}{\rBottom*cos(\theta)}
       \pgfmathsetmacro{\y}{\rBottom*sin(\theta)}
       \coordinate (B\i) at (\x, \y, 0); % B表示底部
     }
    
     % 绘制上底面（十边形）
     \foreach \i in {0,...,\n} {
       \pgfmathsetmacro{\theta}{\i*\angle}
       \pgfmathsetmacro{\x}{\rTop*cos(\theta)}
       \pgfmathsetmacro{\y}{\rTop*sin(\theta)}
       \coordinate (T\i) at (\x, \y, \height); % T表示顶部
     }
    
     % 绘制侧棱
     \foreach \i in {0,...,\n} {
       \draw[black, thick] (B\i) -- (T\i);
     }
    
     % 绘制下底面边
     \foreach \i in {0,...,\n} {
       \pgfmathsetmacro{\next}{mod(\i+1,\n+1)}
       \draw[black, thick] (B\i) -- (B\next);
     }
    
     % 绘制上底面边
     \foreach \i in {0,...,\n} {
       \pgfmathsetmacro{\next}{mod(\i+1,\n+1)}
       \draw[black, thick] (T\i) -- (T\next);
     }
    
     % 标记顶点
      \foreach \i in {0,...,\n} {
       \draw[black, line width=0.6pt, fill=black] (B\i) circle (3pt);
       \draw[black, line width=0.6pt, fill=white] (T\i) circle (3pt);
     }
    
     \node[below, align=center] at (0,0,-1.5) {$a=0$};
   \end{scope}
  
   \end{tikzpicture}  
   \newline
 %/////////////////////////////////////////////////////////////////////////////////////////////////////////////  
 \begin{tikzpicture}[tdplot_main_coords, line join=round, line cap=round]
  
   % 全局参数调整，使五个图适合一行
   \def\scaleFactor{0.6}
   \def\spacing{3.0}

   % ====== 图4：同样的棱柱（与图2类似） ======
   \begin{scope}[xshift=-1*\spacing cm, scale=\scaleFactor]
     % 参数设置
     \def\n{20}            % 边数
     \def\angle{18}        % 十边形的中心角
     \def\radius{2.0}     % 底面半径
     \def\height{0.4}      % 高度（较小值实现扁平效果）
    
     % 绘制下底面（十边形）
     \foreach \i in {0,...,\n} {
       \pgfmathsetmacro{\theta}{\i*\angle}
       \pgfmathsetmacro{\x}{\radius*cos(\theta)}
       \pgfmathsetmacro{\y}{\radius*sin(\theta)}
       \coordinate (B\i) at (\x, \y, 0);
     }
    
     % 绘制上底面（十边形）
     \foreach \i in {0,...,\n} {
       \pgfmathsetmacro{\theta}{\i*\angle}
       \pgfmathsetmacro{\x}{\radius*cos(\theta)}
       \pgfmathsetmacro{\y}{\radius*sin(\theta)}
       \coordinate (T\i) at (\x, \y, \height);
     }
    
     % 绘制侧棱
     \foreach \i in {0,...,\n} {
       \draw[black, thick] (B\i) -- (T\i);
     }
    
     % 绘制下底面边
     \foreach \i in {0,...,\n} {
       \pgfmathsetmacro{\next}{mod(\i+1,\n+1)}
       \draw[black, thick] (B\i) -- (B\next);
     }
    
     % 绘制上底面边
     \foreach \i in {0,...,\n} {
       \pgfmathsetmacro{\next}{mod(\i+1,\n+1)}
       \draw[black, thick] (T\i) -- (T\next);
     }
    
     % 标记顶点
      \foreach \i in {0,...,\n} {
       \draw[black, line width=0.6pt, fill=white] (B\i) circle (3pt);
       \draw[black, line width=0.6pt, fill=black] (T\i) circle (3pt);
     }
    
     \node[below, align=center] at (0,0,-1.5) {$a=1$};
   \end{scope}
  
   % ====== 图5：倒置的棱台 ======
     \begin{scope}[xshift=0*\spacing cm, scale=\scaleFactor]
     % 参数设置
     \def\n{20}            % 边数
     \def\angle{18}        % 十边形的中心角
     \def\rBottom{2.0}     % 下底半径
     \def\rTop{1.8}        % 上底半径（稍小，形成棱台）
     \def\height{0.3}      % 高度（较小值实现扁平效果）
    
     % 绘制下底面（十边形）
     \foreach \i in {0,...,\n} {
       \pgfmathsetmacro{\theta}{\i*\angle}
       \pgfmathsetmacro{\x}{\rBottom*cos(\theta)}
       \pgfmathsetmacro{\y}{\rBottom*sin(\theta)}
       \coordinate (B\i) at (\x, \y, 0); % B表示底部
     }
    
     % 绘制上底面（十边形）
     \foreach \i in {0,...,\n} {
       \pgfmathsetmacro{\theta}{\i*\angle}
       \pgfmathsetmacro{\x}{\rTop*cos(\theta)}
       \pgfmathsetmacro{\y}{\rTop*sin(\theta)}
       \coordinate (T\i) at (\x, \y, \height); % T表示顶部
     }
    
     % 绘制侧棱
     \foreach \i in {0,...,\n} {
       \draw[black, thick] (B\i) -- (T\i);
     }
    
     % 绘制下底面边
     \foreach \i in {0,...,\n} {
       \pgfmathsetmacro{\next}{mod(\i+1,\n+1)}
       \draw[black, thick] (B\i) -- (B\next);
     }
    
     % 绘制上底面边
     \foreach \i in {0,...,\n} {
       \pgfmathsetmacro{\next}{mod(\i+1,\n+1)}
       \draw[black, thick] (T\i) -- (T\next);
     }
    
      \foreach \i in {0,...,\n} {
       \draw[black, line width=0.6pt, fill=white] (B\i) circle (3pt);
       \draw[black, line width=0.6pt, fill=black] (T\i) circle (3pt);
     }
    
     \node[below, align=center] at (0,0,-1.5) {$a=2$};
   \end{scope}
     \begin{scope}[xshift=1.*\spacing cm , scale=\scaleFactor]
     % 参数设置
     \def\n{20}            % 边数
     \def\angle{18}        % 十边形的中心角
     \def\rBottom{2.0}     % 下底半径
     \def\rTop{1.5}        % 上底半径（稍小，形成棱台）
     \def\height{0}      % 高度（较小值实现扁平效果）
    
     % 绘制下底面（十边形）
     \foreach \i in {0,...,\n} {
       \pgfmathsetmacro{\theta}{\i*\angle}
       \pgfmathsetmacro{\x}{\rBottom*cos(\theta)}
       \pgfmathsetmacro{\y}{\rBottom*sin(\theta)}
       \coordinate (B\i) at (\x, \y, 0); % B表示底部
     }
    
     % 绘制上底面（十边形）
     \foreach \i in {0,...,\n} {
       \pgfmathsetmacro{\theta}{\i*\angle}
       \pgfmathsetmacro{\x}{\rTop*cos(\theta)}
       \pgfmathsetmacro{\y}{\rTop*sin(\theta)}
       \coordinate (T\i) at (\x, \y, \height); % T表示顶部
     }
    
     % 绘制侧棱
     \foreach \i in {0,...,\n} {
       \draw[black, thick] (B\i) -- (T\i);
     }
    
     % 绘制下底面边
     \foreach \i in {0,...,\n} {
       \pgfmathsetmacro{\next}{mod(\i+1,\n+1)}
       \draw[black, thick] (B\i) -- (B\next);
     }
    
     % 绘制上底面边
     \foreach \i in {0,...,\n} {
       \pgfmathsetmacro{\next}{mod(\i+1,\n+1)}
       \draw[black, thick] (T\i) -- (T\next);
     }
    
     % 标记顶点
      \foreach \i in {0,...,\n} {
       \draw[black, line width=0.6pt, fill=white] (B\i) circle (3pt);
       \draw[black, line width=0.6pt, fill=black] (T\i) circle (3pt);
     }
    
     \node[below, align=center] at (0,0,-1.5) {$a=+\infty$};
   \end{scope}

 \end{tikzpicture}
     \caption{Clips of the movement of two circles under the transform $\Phi_a$ for various $a$. The bubbles of bar circle are denoted by filled dots, while that of hat circle are denoted by hallow dots. }
     \label{fig:2}
 \end{figure}
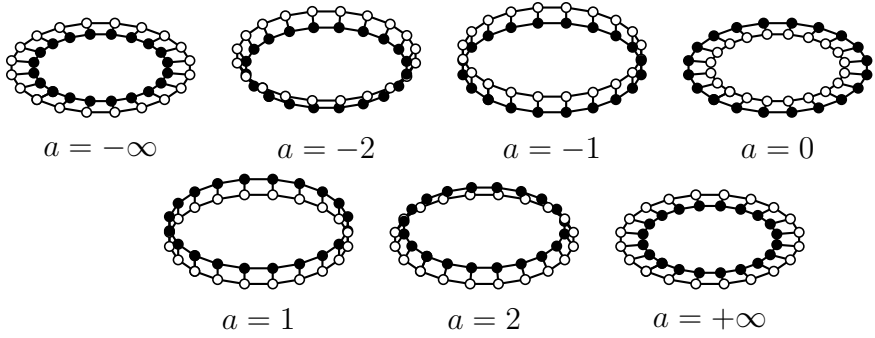

\section{A First Approximation}

From this section, we start to prove the main theorem. We will prove the case $m=1$ in detail and point out the difference and necessary modifications in the remarks for $m=0$. From now on, if there is no further explanation, we shall omit the subscript $m=1$, i.e. denoting $u_{*,1},\mu_1,\lambda_1,r_1,R_1,d_1,\bar\xi_{i,m},\hat\xi_{j,m}$ by $u_*,\mu,\lambda,r,R,d,\bar\xi_{i},\hat\xi_{j}$. 

We begin by performing a first-step approximation, which quantifies the error of the approximation. To analyze the error, we adopt the norms for $\frac{n}{2}<q<n$,
\[||h||_{**}=||(1+|y|)^{n+2-\frac{2n}{q}}h||_{L^q(\mathbb{R}^n)}.\]

From this section until Section 6, we define the following sets
\[\bar B_i:=B(\bar\xi_i,\frac{\delta}k);\ \hat B_i:=B(\hat\xi_i,\frac{\delta}k); \ \text{Int}:=\bigcup_{i=1}^{k}(\bar B_i\cup \hat B_i);\ \text{Ext}:={\mathbb{R}^n}\setminus\text{Int}.\]

We now consider the error of the approximation. The error function is defined as 
\begin{equation}
E:=\Delta u_*+\gamma|u_*|^{p-1}u_*.
\end{equation}

Using equation (1), which can be solved by $U$ and $\bar U_i$, $\hat U_i$, also the linearity of the Laplace operator, we obtain the following formula for $E$ in terms of $U$ and $\bar U_i$, $\hat U_i$, without their derivatives, i.e.,
{\small \begin{align*}
    \gamma^{-1}E:=&\Big(-U^{p}+\sum_{j=1}^{k}\bar U^{p}_j+\sum_{j=1}^{k}\hat U^{p}_j\Big)+\bigg|U-\sum_{j=1}^{k}\bar U_j-\sum_{j=1}^{k}\hat U_j\bigg|^{p-1}\bigg(U-\sum_{j=1}^{k}\bar U_j-\sum_{j=1}^{k}\hat U_j\bigg).
\end{align*}}
Now we investigate the error separately, in the inner domain, and the exterior domain.\\
$\bullet\ $\textit{The inner domain.} For the inner domain, on each $\bar B_i$ and $\hat B_i$, we have 
\[\bar U_j=\frac{\mu^{\frac{n-2}2}}{|x-\bar\xi_j|^{n-2}}+o\bigg(\frac{\mu^{\frac{n-2}2}}{|x-\bar\xi_j|^{n-2}}\bigg);\ \hat U_j=\frac{\lambda^{\frac{n-2}2}}{|x-\hat\xi_j|^{n-2}}+o\bigg(\frac{\lambda^{\frac{n-2}2}}{|x-\hat\xi_j|^{n-2}}\bigg);\ j\neq i.\]
Without loss of generality, we consider $\bar B_1$. The value of $E$ over $\bar B_1$ is dominated by $\bar U_1$, and the rest of the scaled bubbles present as small terms.  By the mean value theorem, we have
\begin{equation*}
\begin{aligned}
\gamma^{-1}E
%\ &\left(-U^{p}+\sum_{j=2}^{k}\bar U^{p}_j+\sum_{j=1}^{k}\hat U^{p}_j\right)+\\
%&+\left|U-\sum_{j=1}^{k}\bar U_j-\sum_{j=1}^{k}\hat U_j\right|^{p-1}\left(\left|U-\sum_{j=1}^{k}\bar U_j-\sum_{j=1}^{k}\hat U_j\right|\right)+\bar U_1^p.\\
=\ &p\left|-U_1+s\left(U-\sum_{j=2}^{k}\bar U_j-\sum_{j=1}^{k}\hat U_j\right)\right|^{p-1}\left(U-\sum_{j=2}^{k}\bar U_j-\sum_{j=1}^{k}\hat U_j\right)-\\
&-U^{p}+\sum_{j=2}^{k}\bar U^{p}_j+\sum_{j=1}^{k}\hat U^{p}_j.
\end{aligned}
\end{equation*}
Now we consider the change of variables, $\gamma^{-1}\tilde E(y):=\gamma^{-1}\mu^{\frac{n+2}{2}}E(\mu y+\bar\xi_1)$
\begin{equation*}
\begin{aligned}
&\gamma^{-1}\tilde E
=\ p\bigg|-U+s\mu^{\frac{n-2}2}\bigg(U(\mu y+\bar\xi_1)-\sum_{j=2}^{k}\bar U_j(\mu y+\bar\xi_1)-\\
&\ -\sum_{j=1}^{k}\hat U_j(\mu y+\bar\xi_1)\bigg)\bigg|^{p-1}\mu^{\frac{n-2}2}\left(U(\mu y+\bar\xi_1)-\sum_{j=2}^{k}\bar U_j(\mu y+\bar\xi_1)-\sum_{j=1}^{k}\hat U_j(\mu y+\bar\xi_1)\right)+\\
&\quad\quad\quad\quad\quad+\mu^{\frac{n+2}2}\bigg(-U^{p}(\mu y+\bar\xi_1)+\sum_{j=2}^{k}\bar U^{p}_j(\mu y+\bar\xi_1)+\sum_{j=1}^{k}\hat U^{p}_j(\mu y+\bar\xi_1)\bigg).\\
\end{aligned}
\end{equation*}
Using the asymptotic main term, we see that
\[|\gamma^{-1}\tilde E|\leq C\left(\frac{\mu^{\frac{n-2}2}}{1+|y|^4}+\mu^{\frac{n+2}{2}}\right).\]
Then by direct calculation, we have  
\[||(1+|y|)^{n+2-\frac{2n}q}\gamma^{-1}\tilde E||_{L^q(B_{(\lambda k)^{-1}\delta})}\leq Ck^{-n/q};\ n\geq4\]
and 
\[||(1+|y|)^{n+2-\frac{2n}q}\gamma^{-1}\tilde E||_{L^q(B_{(\lambda k)^{-1}\delta})}\leq Ck^{-1}\log^{-1}k;\ n=3.\]
The cases of $\hat B_j$ are nothing more than tautology of the previous argument.\\
$\bullet\ $\textit{The outer domain.} Now we turn to the outer domain. In fact, we have the following asymptotic formula on the outer domain for $1\leq j\leq k$,
\[\bar U_j=\frac{\mu^{\frac{n-2}2}}{|x-\bar\xi_j|^{n-2}}+o\left(\frac{\mu^{\frac{n-2}2}}{|x-\bar\xi_j|^{n-2}}\right);\ \hat U_j=\frac{\lambda^{\frac{n-2}2}}{|x-\hat\xi_j|^{n-2}}+o\left(\frac{\lambda^{\frac{n-2}2}}{|x-\hat\xi_j|^{n-2}}\right).\]
By invoking the pointwise estimate $|(a+b)^p-a^p|\leq C(pa^{p-1}b+b^p)$,
%\[|(a+b)^p-a^p|\leq C(pa^{p-1}b+b^p),\]
we have 
\[|\gamma^{-1}E|\leq C\bigg(\bigg(pU^{p-1}+\bigg |\sum_{j=1}^{k}\bar U_j+\sum_{j=1}^{k}\hat U_j\bigg|^{p-1}\bigg)\bigg(\sum_{j=1}^{k}\bar U_j+\sum_{j=1}^{k}\hat U_j\bigg)+\sum_{j=1}^{k}\bar U_j^p+\sum_{j=1}^{k}\hat U_j^p\bigg).\]
Moreover, the last two sums are small terms with respect to the first. Thus, we have
\begin{equation*}
\begin{aligned}
|\gamma^{-1}E|\leq & \ C\left(\frac1{(1+y^2)^2}+\left|\sum_{j=1}^k\left(\frac{\mu^{\frac{n-2}2}}{|y-\bar\xi_j|^{n-2}}+\frac{\lambda^{\frac{n-2}2}}{|y-\hat\xi_j|^{n-2}}\right)\right|^{\frac4{n-2}}\right)\times\\
& \qquad\qquad\qquad\times\left(\sum_{j=1}^k\left(\frac{\mu^{\frac{n-2}2}}{|y-\bar\xi_j|^{n-2}}+\frac{\lambda^{\frac{n-2}2}}{|y-\hat\xi_j|^{n-2}}\right)\right)\\
\leq&\ C\frac1{(1+y^2)^2}\sum_{j=1}^k\left(\frac{\mu^{\frac{n-2}2}}{|y-\bar\xi_j|^{n-2}}+\frac{\lambda^{\frac{n-2}2}}{|y-\hat\xi_j|^{n-2}}\right)\\
\end{aligned}
\end{equation*}
Now we are ready to integrate both sides of the inequality. Using the triangular inequality of norms, we have
\begin{align*}
&||(1+|y|)^{n+2-\frac{2n}q}E||_{L^{q}(\text{Ext})}\\
\leq\ &C\left\|\frac{(1+|y|)^{n+2-\frac{2n}q }}{(1+y^2)^2}\sum_{j=1}^k\left(\frac{\mu^{\frac{n-2}2}}{|y-\bar\xi_j|^{n-2}}+\frac{\lambda^{\frac{n-2}2}}{|y-\hat\xi_j|^{n-2}}\right)\right\|_{L^{q}(\text{Ext})}\\
\leq\ & C\sum_{j=1}^k\left\|\frac{(1+|y|)^{n+2-\frac{2n}q }}{(1+y^2)^2}\left(\frac{\mu^{\frac{n-2}2}}{|y-\bar\xi_j|^{n-2}}+\frac{\lambda^{\frac{n-2}2}}{|y-\hat\xi_j|^{n-2}}\right)\right\|_{L^{q}(\text{Ext})}.\\
\leq\ & C\left(\mu^{\frac{n-2}2}\sum_{j=1}^k\left\|\frac{(1+|y|)^{n-2-\frac{2n}q}}{|y-\bar\xi_j|^{n-2}}\right\|_{L^{q}(\bar B_{i}^C)}+\lambda^{\frac{n-2}2}\sum_{j=1}^k\left\|\frac{(1+|y|)^{n-2-\frac{2n}q}}{|y-\hat\xi_j|^{n-2}}\right\|_{L^{q}(\hat B_{i}^C)}\right)\\
\leq\ & C\Bigg(\mu^{\frac{n-2}2}k\left(\int_{\frac\delta k}^{1}\frac{s^{n-1}}{s^{(n-2)q}}\right)^{\frac1q}+\lambda^{\frac{n-2}2}k\left(\int_{\frac\delta k}^{1}\frac{s^{n-1}}{s^{(n-2)q}}\right)^{\frac1q}\Bigg)\leq Ck^{1-\frac nq}.
\end{align*}
Repeating the proof for the case in dimension three, we have 
\[||(1+|y|)^{n+2-\frac{2n}q}E||_{L^{q}(\text{Ext})}\leq C\log^{-1}k.\]

So far, we have finished the first-step approximation. By the previous calculation, the error turns out to be
\begin{equation}
    ||E||_{**}\leq Ck^{1-\frac nq},\ n\geq4;\ ||E||_{**}\leq C\log^{-1}k,\ n=3.
\end{equation}

\begin{remark}
    The calculation of first step approximation for $u_{0}$ by $u_{*,0}$ is the same as this section, the only change is that the inner and outer domains are adapted to the corresponding configurations, that is to consider the function $u_{*,0}$ on the sets
    \[\bar B_{i,0}:=B(\bar\xi_{i,0},\frac{\delta}k);\ \hat B_{i,0}:=B(\hat\xi_{i,0},\frac{\delta}k); \ \text{Int}_0:=\bigcup_{i=1}^{k}(\bar B_{i,0}\cup \hat B_{i,0});\ \text{Ext}_0:={\mathbb{R}^n}\setminus\text{Int}_0,\]
    and to repeat the process performed above.
    In fact, the results of the estimates are also identical. 
\end{remark}
%%%%%%%%%%%%%%%%%%%%%%%%%%%%%%%%%%%%%%%%%%%%%%%%%%%%%%%

\section{Gluing Scheme}
From now on, we denote the weighed infinity-norm
\[||g||_{*}:=||(1+|y|^{n-2})g||_{L^\infty(\mathbb{R}^n).}\]
In this section, we will follow the inner-outer gluing scheme, and solve the inner and outer problems respectively.\par

Our aim is to figure out the aforementioned error term of the approximate solution $u_*$, say $\phi$. That is, the function $u=u_*+\phi$ solves the problem in the space, then the function $\phi$ satisfies
\begin{equation}
    \Delta \phi+p\gamma|u_*|^{p-1}\phi+E+\gamma N(\phi)=0,
\end{equation}
with 
\[N(\phi):=|u|^{p-1}u-|u_*|^{p-1}u_*-p|u_*|^{p-1}\phi.\]\par
Let $\zeta(s)$ be a smooth decreasing function, with $\zeta(s)=1$ for $s<1$ and $\zeta(s)=0$ for $s>2$. Then we define 
\begin{align*}
    \hat\zeta_j(y)=\zeta{\left(\frac k{\delta}\left|\frac{y}{|y|^2}-\hat\xi_j\right|\right)};\quad \bar\zeta_j(y)=\zeta\left(\frac{k}{\delta}|y-\bar\xi_j|\right),\quad 1\leq j\leq k.
\end{align*}
% \begin{equation*}
% \left\{
% \begin{aligned}
%     \hat\zeta_j(y)=\zeta{\left(\frac k{\delta}\left|\frac{y}{|y|^2}-\hat\xi_j\right|\right)}&;\\
%     \bar\zeta_j(y)=\zeta\left(\frac{k}{\delta}|y-\bar\xi_j|\right)\ \ \ \ \ \ &;
% \end{aligned}
% \right.\ \text{for}\ j=1,...,k,
% \end{equation*}
It is easy to see that $\hat\zeta_j(y)=\mathcal{T}\bar\zeta_j(y/|y|^2)$.

We now consider such $\phi$ of the form
\[\phi=\sum_{j=1}^k(\bar\phi_j+\hat\phi_j)+\psi.\]
Then the inner and outer problems spell as follows
\begin{equation}\label{innereqbar}
    \Delta\bar\phi_j+p\gamma|u_*|^{p-1}\bar\zeta_j\bar\phi_j+\bar\zeta_j(p\gamma|u_*|^{p-1}\psi+E+\gamma N(\phi))=0,\ j=1,...,k;
\end{equation}
\begin{equation}\label{innereqhat}
    \Delta\hat\phi_j+p\gamma|u_*|^{p-1}\hat\zeta_j\hat\phi_j+\hat\zeta_j(p\gamma|u_*|^{p-1}\psi+E+\gamma N(\phi))=0,\ j=1,...,k;
\end{equation}
\begin{equation}\label{outereq}
\begin{aligned}
    &\Delta\psi+p\gamma U^{p-1}\psi+p\gamma\left(\left|u_*\right|^{p-1}-U^{p-1}\right)\left(1-\sum_{j=1}^{k}\left(\bar\zeta_j+\hat\zeta_j\right)\right)\psi\\
    &-p\gamma U^{p-1}\sum_{j=1}^{k}\left(\bar\zeta_j+\hat\zeta_j\right)\psi+p\gamma|u_*|^{p-1}\sum_{j=1}^k\left(1-\bar\zeta_j\right)\bar\phi_j+\\
    &+p\gamma|u_*|^{p-1}\sum_{j=1}^k\left(1-\hat\zeta_j\right)\hat\phi_j+\left((1-\sum_{j=1}^{k}\left(\bar\zeta_j+\hat\zeta_j\right)\right)\times\\
    &\qquad\qquad\qquad\qquad\qquad\qquad\qquad\qquad\times\left(E+\gamma N(\phi)\right)=0
\end{aligned}
\end{equation}

The first $2k$ equations are called inner problems, and the last one is called outer problem. In order to maintain the symmetry of $\phi$, we need to impose the following symmetry condition to the solutions of the inner problems.
\begin{equation}
    \bar\phi_j=\mathcal{T}^{2j-2}\bar\phi_1,\ j=2,...,k;
\end{equation}
\begin{equation}
\bar\phi_1(y_1,...,y_i,...,y_n)=\bar\phi_1(y_1,...,-y_i,...,y_n),\ 3\leq i\leq n.
\end{equation}
To obtain some analogy of geometric ``congruence" condition, we would like to connect the inner circles and the outer circle, namely.
\begin{equation}
    \hat\phi_1=\mathcal K\bar\phi_1.
\end{equation}

\begin{proposition}
There exist constants $k_{0}$, $C$ and $\rho_{0}$ such that, for all integer $k \geq k_{0}$ and positive real number $\rho < \rho_{0}$, if $\bar{\phi}_{j}$ and $\hat{\phi}_{j}$ satisfy the conditions (17)-(19), then there exists a unique solution $\psi = \Psi(\bar{\phi}_{1},\hat{\phi}_{1})$ of the inner problem (16) such that
\begin{equation}
    \psi=\mathcal{T}^{2j}\psi,\ j=1,...,k;
\end{equation}
\begin{equation}
\psi(y_1,...,y_i,...,y_n)=\psi(y_1,...,-y_i,...,y_n),\ 3\leq i\leq n;
\end{equation}
\begin{equation}
    \psi=\mathcal  K\psi
\end{equation}
and
\begin{equation}
\begin{aligned}
    &\ \|\psi\|_{*} \leq C\left(\|\bar{\bar{\phi}}_{1}\|_{*} +||\hat{\hat\phi}_1||_* + k^{1-\frac{n}{q}}\right)\  \text{if}\  n \geq 4, \\
    &\|\psi\|_{*} \leq C\left(\|\bar{\bar{\phi}}_{1}\|_{*}+ ||\hat{\hat\phi}_1||_* + \log^{-1}k\right) \ \text{if} \ n = 3.
\end{aligned}
\end{equation}
where $\bar{\bar\phi}_1=\mu^{\frac{n-2}2}\bar{\bar\phi}(\mu y+\bar\xi_1)$, $\hat{\hat\phi}=\lambda^{\frac{n-2}2}\hat\phi(\lambda x+\hat \xi_1)$.
Furthermore, the operator $\Psi$ satisfies
\[
\left\|\Psi(\bar{\bar\phi}_{1},\hat{\hat\phi}_1) - \Psi(\bar{\bar\phi}_{2},\hat{\hat{\phi}}_2)\right\|_{*} \leq C \big(\left\|\bar{\bar\phi}^{1} - \bar{\bar\phi}^{2}\right\|_{*}+\left\|\hat{\hat\phi}^{1} - \hat{\hat\phi}^{2}\right\|_{*}\big).
\]
\end{proposition}

\noindent\textbf{Proof.} We proceed via classical fixed point theory. To apply this method, we need two steps.\\
\textit{Step 1.} Linear theory. We start the proof by considering the linear equation
\begin{equation}
    \Delta \psi+p\gamma U^{p-1}\psi=h.
\end{equation}
with the function $h$ satisfying the symmetry condition (20), (21) as well as 
\[h(y)=|y|^{-n-2}h(\mathcal{T}y/|y|^2).\]
The linear theory has been well studied, see \cite{DELPINO2011}, Lemma 3.1. By invoking the linear theory here, all we need to check is the orthogonality with respect to the basis vectors of the kernel. For $j=1,...,n$, repeating the process as Lemma 4.1 in \cite{DELPINO2011} will lead to $\int hZ_j=0$. For $j=n+1$, the case is slightly different. Thus, we define 
\[I(\lambda)=\lambda^{\frac{n-2}{2}}\int h(y)U(\lambda y).\]
Making a substitute $y\mapsto \mathcal{T}y|y|^{-2}$, we see that $I(\lambda)=I(\lambda^{-1})$, hence \[I'(1)=\int h(y)Z_{n+1}(y)=0.\]
Invoking the linear theory and the symmetry of the solution, the uniqueness and existence are clear. From now on, we denote the (linear) solution operator as $T$, then by \cite{DELPINO2011}, Lemma 3.1, we also have boundedness of $T$, i.e.,
\[||T(h)||:=||\psi||_*\leq C||h||_{**}.\]
\textit{Step 2.} Fixed point argument.\par
We denote the ``potential" $V=V_1+V_2$, where
\[V_1(y):=p\gamma(|u_*|^{p-1}-U^{p-1})(1-\sum_{j=1}^k(\bar\zeta_j+\hat\zeta_j));\]
and
\[V_2(y)=p\gamma U^{p-1}\sum_{j=1}^k(\bar\zeta_j+\hat\zeta_j),\]
also denote
\[M(\psi)=(1-\sum_{j=1}^k(\bar\zeta_j+\hat\zeta_j))(E+\gamma N(\phi)).\]
Our fixed point problem turns out to be
\begin{equation}
\psi=-T\left(V\psi+p\gamma|u_*|^{p-1}\sum_{j=1}^k((1-\bar\zeta_j)\bar\phi_j+(1-\hat\zeta_j)\hat\phi_j)+M(\psi)\right)=\mathcal M(\psi).
\end{equation}
Since $T$, the solution operator, is a bounded linear operator, then we only need to check that the right-hand side term is a contracting map.\par
For $V_1$, we have 
\[V_1=C\left|U-s\sum_{j=1}^k(\bar U_j+\hat U_j)\right|^{p-2}\sum_{j=1}^k(\bar U_j+\hat U_j)(1-\sum_{j=1}(\bar\zeta_j+\hat\zeta_j))\]
Then using the estimates on Ext, we have
\[|V_1|\leq CU^{p-2}\sum_{i=1}^k\left(\frac{\mu^{\frac{n-2}2}}{|y-\bar\xi_i|^{n-2}}+\frac{\lambda^{\frac{n-2}2}}{|y-\hat\xi_i|^{n-2}}\right).\]
then
\[|V_1\psi|\leq C||\psi||_*U^{p-1}\sum_{i=1}^k\left(\frac{\mu^{\frac{n-2}2}}{|y-\bar\xi_i|^{n-2}}+\frac{\lambda^{\frac{n-2}2}}{|y-\hat\xi_i|^{n-2}}\right).\]
As was shown in section 3, we can calculate that
$$||V_1\psi||_{**}\leq \frac{C||\psi||_*}{k^{\frac nq-1}},\ n\geq4;\ ||V_1\psi||_{**}\leq \frac{C||\psi||_*}{\log k},\ n=3.$$
Also, direct calculation gives
\[||V_2\psi||_{**}\leq C\frac{||\psi||_*}{k^{\frac nq-1}}.\]
The second term can be estimated as follows
\[\left||u_*|^{p-1}\sum_{j=1}^k((1-\bar\zeta_j)\bar\phi_j+(1-\hat\zeta_j)\hat\phi_j)\right|\leq CU^{p-1}\sum_{j=1}^k(|\bar\phi_j|+|\hat\phi_j|),\]
we also have obtained the pointwise estimates for $\bar\phi_i$ and $\hat \phi_i$, by the definition of $\bar{\bar\phi}$, 
\[|\bar\phi_j|\leq C||\bar{\bar\phi}||_*\frac{\mu^{\frac{n-2}{2}}}{\mu^{n-2}+|y-\bar\xi_j|^{n-2}};\ |\hat\phi_j|\leq C||\hat{\hat\phi}||_*\frac{\lambda^{\frac{n-2}{2}}}{\lambda^{n-2}+|y-\hat\xi_j|^{n-2}}.\]
Combining these three estimates gives
\begin{equation}
\left\||u_*|^{p-1}\sum_{j=1}^k((1-\bar\zeta_j)\bar\phi_j+(1-\hat\zeta_j)\hat\phi_j)\right\|_{**}\leq
\left\{
\begin{aligned}
&Ck^{1-\frac{n}{q}}(||\bar{\bar\phi}_1||_{*}+||\hat{\hat{\phi}}_1||_*)\ \ n\geq4;\\
&C\log^{-1}k(||\bar{\bar\phi}_1||_*+||\hat{\hat{\phi}}_1||_*)\ \ n=3.
\end{aligned}
\right.
\end{equation}

Finally, using the norm estimate of $E$ on Ext, the pointwise estimates for $\hat\phi_i$ and the following pointwise estimate for the case in which $|\phi|\ll |u_*|$,
\[||u|^{p-1}u-|u_*|^{p-1}u_*-p|u_{*}|^{p-1}\phi|\leq C|u_*|^{p-2}\phi^2,\]
we have
\begin{equation}
||M(\psi)||_{**}\leq\left\{
\begin{aligned}
\frac C{k^{\frac nq-1}}(1+||\bar{\bar\phi}_1||_*^2+||\hat{\hat\phi}_1||_*^2)+C||\psi||_*^2,\ &\ n\geq4;\\
\frac C{\log k}(1+||\bar{\bar\phi}_1||_*^2+||\hat{\hat\phi}_1||_*^2)+C||\psi||_*^2,\ &\ n=3.
\end{aligned}
\right.
\end{equation}
If $|\psi_1|,|\psi_2|\leq\rho$, a tautological proof gives
\begin{equation}
    ||M(\psi_1)-M(\psi_2)||_{**}\leq C\rho||\psi_1-\psi_2||_*.
\end{equation}
Combining previous calculation, also the fact that $T$ is bounded, we see that $\mathcal M$ is a contracting map. Also, we have, from the fixed point problem (19) and the estimates above,
\begin{equation*}
\begin{aligned}
    &\ \|\psi\|_{*} \leq C\left(\|\bar{\bar{\phi}}_{1}\|_{*} +||\hat{\hat\phi}_1||_* + k^{1-\frac{n}{q}}\right)\  \text{if}\  n \geq 4, \\
    &\|\psi\|_{*} \leq C\left(\|\bar{\bar{\phi}}_{1}\|_{*}+ ||\hat{\hat\phi}_1||_* + \log^{-1}k\right) \ \text{if} \ n = 3,
\end{aligned}
\end{equation*}
which concludes the proof of the proposition.\qed

Having solved the outer problem, we now consider the solution $\psi$ as $\psi=\Psi(\bar{\bar\phi}_1,\hat{\hat\phi}_1)$, and the inner problems become
\begin{equation*}
    \Delta\bar\phi_1+p\gamma|u_*|^{p-1}\bar\zeta_1\bar\phi_1+\bar\zeta_1(p\gamma|u_*|^{p-1}\Psi(\bar{\bar\phi}_1,\hat{\hat\phi}_1)+E+\gamma N(\phi))=0,
\end{equation*}
\begin{equation*}
    \Delta\hat\phi_1+p\gamma|u_*|^{p-1}\hat\zeta_1\hat\phi_1+\hat\zeta_1(p\gamma|u_*|^{p-1}\Psi(\bar{\bar\phi}_1,\hat{\hat\phi}_1)+E+\gamma N(\phi))=0,
\end{equation*}
To simplify the notations, we rewrite them as
\begin{equation}
    \Delta\bar\phi_1+p\gamma|\bar U_1|^{p-1}\bar\zeta_1\bar\phi_1+\bar\zeta_1E+\gamma\bar{\mathcal{N}}(\bar\phi_1,\hat\phi_1,\phi)=0,
\end{equation}
\begin{equation}
    \Delta\hat\phi_1+p\gamma|\hat U_1|^{p-1}\hat\zeta_1\hat\phi_1+\hat\zeta_1E+\gamma\hat{\mathcal{N}}(\bar\phi_1,\hat\phi_1,\phi)=0;
\end{equation}
by introducing 
\[\bar{\mathcal{N}}(\bar\phi_1,\hat\phi_1,\phi):=p(|u_*|^{p-1}\bar\zeta_1-|\bar U_1|^{p-1})\bar\phi_1+\bar\zeta_1(p|u_*|^{p-1}\Psi(\bar{\bar{\phi}}_1,\hat{\hat\phi}_1)+N(\phi)),\]
\[\hat{\mathcal{N}}(\bar\phi_1,\hat\phi_1,\phi):=p(|u_*|^{p-1}\bar\zeta_1-|\hat U_1|^{p-1})\hat\phi_1+\hat\zeta_1(p|u_*|^{p-1}\Psi(\bar{\bar{\phi}}_1,\hat{\hat\phi}_1)+N(\phi)).\]
This equation is solved by the same method. If the solvable conditions, i.e., the orthogonal conditions, are satisfied, then we are done. We may first solve equations under the assumption that
\begin{equation}
    \bar c_{1}=\frac{\int_{\mathbb{R}^n}(\bar\zeta_1E+\gamma\bar{\mathcal{N}}(\bar\phi_1,\hat\phi_1,\phi))\bar Z_{1}}{\int_{\mathbb{R}^n}\bar U^{p-1}_1\bar Z_{1}^2}=0;\ 
\end{equation}
\begin{equation}
    \bar c_{n+1}=\frac{\int_{\mathbb{R}^n}(\bar\zeta_1E+\gamma\bar{\mathcal{N}}(\bar\phi_1,\hat\phi_1,\phi))\bar Z_{n+1}}{\int_{\mathbb{R}^n}\bar U^{p-1}_1\bar Z_{n+1}^2}=0.
\end{equation}
Since the orthogonality with $\bar Z_{1}$ and $\bar Z_{n+1}$ is non-trivial, which will be thoroughly computed in section 6.
\begin{proposition}
There exists the unique solution $\bar\phi_1$ to the inner problems that satisfies
\[\|\bar{\bar\phi}_1\|_*\leq Ck^{-\frac nq},\ n\geq4;\ \|\bar{\bar\phi}_1\|_*\leq Ck^{-1}\log^{-1}k,\ n=3;\]
and 
\[||\bar{\bar{\mathcal{N}}}(\bar\phi_1,\hat\phi_1,\phi)||_{**}\leq Ck^{-\frac{2n}q},\ n\geq4;\ ||\bar{\bar{\mathcal{N}}}(\bar\phi_1,\hat\phi_1,\phi)||_{**}\leq Ck^{-2}\log^{-2}k,\ n=3.\]
where 
\[\bar{\bar{\mathcal{N}}}(\bar\phi_1,\hat\phi_1,\phi)=\mu^{\frac{n+2}2}\bar{{\mathcal{N}}}(\bar\phi_1,\hat\phi_1,\phi)(\bar\xi_1+\mu y);\]
\[\hat{\hat{\mathcal{N}}}(\bar\phi_1,\hat\phi_1,\phi)=\mu^{\frac{n+2}2}\hat{{\mathcal{N}}}(\bar\phi_1,\hat\phi_1,\phi)(\hat\xi_1+\lambda y).\]
\end{proposition}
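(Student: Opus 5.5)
We will solve the inner problem for $\bar\phi_1$ by a fixed point argument in the rescaled variable; $\hat\phi_1$ is then forced by the congruence condition $\hat\phi_1=\mathcal K\bar\phi_1$. Set $\bar{\bar\phi}_1(y)=\mu^{\frac{n-2}2}\bar\phi_1(\bar\xi_1+\mu y)$. After rescaling, the equation for $\bar\phi_1$ reads
\[
\Delta\bar{\bar\phi}_1+p\gamma U^{p-1}\bar{\bar\phi}_1+\tilde\zeta_1\tilde E+\gamma\bar{\bar{\mathcal N}}(\bar\phi_1,\hat\phi_1,\phi)=0,
\]
where $\tilde\zeta_1(y)=\zeta(k\mu|y|/\delta)$ is supported in $|y|\le 2\delta/(k\mu)$ and $\tilde E$ is the rescaled error from Section 3. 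Before imposing the orthogonality conditions, we first solve the projected problem
\[
\Delta\bar{\bar\phi}_1+p\gamma U^{p-1}\bar{\bar\phi}_1+h=c_1U^{p-1}Z_1+c_{n+1}U^{p-1}Z_{n+1}.
\]
The right-hand side has only these two kernel directions because the imposed symmetries, namely evenness in $y_3,\dots,y_n$ and the reflection symmetry in $y_2$ inherited from the dihedral configuration, kill the components along $Z_2,\dots,Z_n$.

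For the linear problem we invoke the linear theory of \cite{DELPINO2011} (Lemma 3.1 and its projected version): for $\|h\|_{**}<\infty$ there is a unique solution, orthogonal to $U^{p-1}Z_1$ and $U^{p-1}Z_{n+1}$, with $\|\bar{\bar\phi}_1\|_*\le C\|h\|_{**}$. Denote this solution operator by $\bar T$. The nonlinear problem then becomes the fixed point problem
\[
\bar{\bar\phi}_1=\bar T\bigl(\tilde\zeta_1\tilde E+\gamma\bar{\bar{\mathcal N}}\bigr)
\]
on the ball $\|\bar{\bar\phi}_1\|_*\le Mk^{-n/q}$ (respectively $Mk^{-1}\log^{-1}k$ when $n=3$), with $\hat\phi_1=\mathcal K\bar\phi_1$ and $\psi=\Psi(\bar{\bar\phi}_1,\hat{\hat\phi}_1)$ supplied by the previous Proposition.

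The main estimates are the following.
\begin{enumerate}
\item Since $\tilde E$ is supported in the inner ball, the Section 3 computation gives $\|\tilde\zeta_1\tilde E\|_{**}\le Ck^{-n/q}$, respectively $Ck^{-1}\log^{-1}k$ for $n=3$.
\item For the potential term $p(|u_*|^{p-1}\bar\zeta_1-\bar U_1^{p-1})\bar\phi_1$, we note that on $\bar B_1$ one has $|u_*|^{p-1}-\bar U_1^{p-1}=O(\bar U_1^{p-2}\cdot\text{rest})$, and after rescaling the rest is bounded by $\mu^{\frac{n-2}2}\bigl(1+\sum_{j\neq1}\mu^{\frac{n-2}{2}}|\bar\xi_1-\bar\xi_j|^{2-n}+\cdots\bigr)$, which is small. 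The outer cut-off region contributes $U^{p-1}$ times a $(k\mu)^{\text{power}}$ factor. Together these give a bound of the form $o(1)\|\bar{\bar\phi}_1\|_*$.
\item The term $\bar\zeta_1p|u_*|^{p-1}\psi$ rescales to $\mu^{\frac{n-2}2}U^{p-1}\psi(\bar\xi_1+\mu y)$, whose $\|\cdot\|_{**}$ norm restricted to $|y|<2\delta/(k\mu)$ is $\le C\mu^{\frac{n-2}2}(k\mu)^{-\cdots}\|\psi\|_*$. Combined with $\|\psi\|_*\le C(\|\bar{\bar\phi}_1\|_*+\|\hat{\hat\phi}_1\|_*+k^{1-n/q})$ and $\mu\sim k^{-2}$, this contribution is of lower order, of size $k^{-2n/q}$ or smaller.
\item The nonlinear term $N(\phi)$ is quadratic, bounded by $|u_*|^{p-2}\phi^2$ (for $p\ge2$; use $|\phi|^p$ otherwise), and hence gives $C\|\bar{\bar\phi}_1\|_*^2+\dots\le Ck^{-2n/q}$.
\end{enumerate}

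To control $\hat{\hat\phi}_1$ by $\bar{\bar\phi}_1$, we use that the Kelvin transform $\mathcal K$ maps the bubble at $\bar\xi_1$ with scale $\mu$ to the bubble at $\hat\xi_1$ (up to the rotation $\mathcal T$) with scale $\lambda$, by \eqref{main-K}. Hence $\|\hat{\hat\phi}_1\|_*\le C\|\bar{\bar\phi}_1\|_*$, and the $\hat\phi_1$ equation is automatically satisfied by $\mathcal K$-invariance of $u_*$ and $E$.

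The Lipschitz estimates follow in the same way, using the Lipschitz bound on $\Psi$ from the previous Proposition. Therefore the map is a contraction for $k$ large, which yields existence and uniqueness. The bound on $\|\bar{\bar{\mathcal N}}\|_{**}$ then follows by collecting items 2–4 with $\|\bar{\bar\phi}_1\|_*\le Ck^{-n/q}$; each is $O(k^{-2n/q})$, and correspondingly $O(k^{-2}\log^{-2}k)$ when $n=3$.

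The main obstacle is item 3, the coupling through $\psi$. There the $k^{1-n/q}$ term in $\|\psi\|_*$ must be shown to lose enough through the factor $\mu^{\frac{n-2}2}$ and the small support so that it does not spoil the $k^{-2n/q}$ rate. This step needs the weighted computation to be done carefully, especially for $n=3$ with the logarithmic scalings.
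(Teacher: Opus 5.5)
\textbf{Verdict.} Your scheme is the paper's, but the coupling term (your item 3, the step you flagged) does not reach the stated $k^{-\frac{2n}q}$ rate, and the paper's own estimate falls short at the same place.

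\textbf{What matches the paper and works.} You use the same ingredients as the paper:
\begin{itemize}
\item linear theory for $\Delta+p\gamma U^{p-1}$ in the symmetry class; your projected problem with multipliers $c_1,c_{n+1}$ is the rigorous form of the paper's ``assume $\bar c_1=\bar c_{n+1}=0$'';
\item a contraction in rescaled variables on a ball of radius $\sim k^{-\frac nq}$;
\item the coupling $\hat\phi_1=\mathcal K\bar\phi_1$;
\item term-by-term bounds that correspond to the paper's $f_1,\dots,f_5$.
\end{itemize}
Items 1 and 2 and the contraction are fine. They give existence, uniqueness in the ball, and $\|\bar{\bar\phi}_1\|_*\le Ck^{-\frac nq}$. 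In item 2 you should record the quantitative factor $C\mu^{\frac n{2q}}\sim Ck^{-\frac nq}$ in front of $\|\bar{\bar\phi}_1\|_*$, not just $o(1)$, because the bound on $\bar{\bar{\mathcal N}}$ needs it.

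\textbf{Where it fails.} The conclusion you state in item 3 is false. For $n\ge4$ we have $(n-2)q>n$, so the weighted norm of $\mu^{\frac{n-2}2}U^{p-1}$ on $B(0,2\delta/(k\mu))$ is dominated by the edge of the support:
\[
\mu^{\frac{n-2}2}\big\|(1+|y|)^{n-2-\frac{2n}q}\big\|_{L^q(B(0,2\delta/(k\mu)))}\asymp \mu^{\frac{n-2}2}(k\mu)^{2-n+\frac nq}\asymp k^{-\frac nq}.
\]
This is the same factor as for $\tilde E$, so the small support gives no extra gain.
\begin{itemize}
\item Near $\bar\xi_1$ the only information you have is $|\psi|\le C\|\psi\|_*\le C(\|\bar{\bar\phi}_1\|_*+\|\hat{\hat\phi}_1\|_*+k^{1-\frac nq})$. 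Hence the coupling term has $\|\cdot\|_{**}$ norm of order $k^{-\frac nq}\cdot k^{1-\frac nq}=k^{1-\frac{2n}q}$, a factor $k$ worse than $k^{-\frac{2n}q}$.
\item For $n=3$ you get $\mu^{\frac12}\log^{-1}k\sim k^{-1}\log^{-2}k$ instead of $k^{-2}\log^{-2}k$.
\item For $n\ge4$ the $\psi^2$ part of item 4 has the same defect. It is of order $k^{-\frac nq}\|\psi\|_*^2\lesssim k^{2-\frac{3n}q}$, which exceeds $k^{-\frac{2n}q}$ because $q>\frac n2$.
\end{itemize}
Since $q<n$, both contributions are still $o(k^{-\frac nq})$. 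So the fixed point and the first estimate survive, but $\|\bar{\bar{\mathcal N}}\|_{**}\le Ck^{-\frac{2n}q}$ does not follow.

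\textbf{The paper has the same deficit.} Its summary bound $\|\bar{\bar{\mathcal N}}\|_{**}\le C\mu^{\frac n{2q}}(\|\bar{\bar\phi}_1\|_*+\|\hat{\hat\phi}_1\|_*+k^{1-\frac nq})$ is also of order $k^{1-\frac{2n}q}$. Following the paper will therefore not close this step.

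\textbf{What would fix it.} You have two options.
\begin{itemize}
\item Prove a sharper \emph{local} bound for $\psi$ on $B(\bar\xi_1,2\delta/k)$, of order $k^{-\frac nq}$ (resp.\ $k^{-1}\log^{-1}k$ for $n=3$), instead of the global $k^{1-\frac nq}$. One possible source is the Newton-potential representation of $\psi$ used in the symmetry decomposition.
\item Otherwise, state what is actually proved, $\|\bar{\bar{\mathcal N}}\|_{**}\le Ck^{1-\frac{2n}q}$ (resp.\ $Ck^{-1}\log^{-2}k$). You would then have to check that this weaker bound still suffices in the reduced equations used to balance the parameters.
\end{itemize}
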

\textbf{Proof.} We also divide the proof into two parts.\\
\textit{Step 1.} Linear theory.
Analogously, we invoke the lemma to solve the following linear equation,
\begin{equation}
    \Delta \bar\phi_1+p\gamma \bar U^{p-1}_1\bar\phi_1+\bar h=0 \ \text{in } \mathbb{R}^n;
\end{equation}
\begin{equation}
    \Delta \hat\phi_1+p\gamma \hat U^{p-1}_1\hat\phi_1+\hat h=0 \ \text{in } \mathbb{R}^n;
\end{equation}
where $\bar h,\hat h$ satisfies the symmetries (11)-(13). Also we impose the condition that
\[\hat h(y)=\mathcal K(\bar h)(y)=|y|^{-2-n}\bar h(\mathcal Ty/|y|^2).\]
That is, $\hat h=\mathcal K\bar h$.
\par
By the oddness of $\bar Z_i, 2\leq i\leq n$, we see that $\int_{\mathbb{R}^n}Z_i\bar h=0.$ We only need to check that these two equations are coupled.
In fact, under the assumptions that $c_1,c_{n+1}=0$, we see that the problem (34) has a solution $\bar\phi_1$. Applying the linear transform $\mathcal K$ on both sides of the equation, we have
\[|y|^4\Delta (\mathcal K\bar\phi_1)+p\gamma|y|^4 (\mathcal K\bar U_1)^{p-1}(\mathcal K\bar\phi_1)+|y|^{2-n}h(\mathcal Ty/|y|^2)=0.\]
If the second equation satisfies the condition of congruence, then we have, as was assumed, $\hat h=|y|^{-2-n}\bar h(\mathcal Ty/|y|^2)$, then the function $\mathcal K \bar\phi_1$ solves the second equation, vice versa. That is, the first equation has the unique solution iff the second one has the unique solution. Furthermore, the solution satisfies the norm estimate
\begin{equation}
||\bar{\bar\phi}_1||_{*}\leq C||\bar h||_{**};\ ||\hat{\hat\phi}_1||_{*}\leq C||\hat h||_{**}.
\end{equation}
From now on, we denote the solution operator as $T=(\bar T,\hat T)^{T}$.
\\
\textit{Step 2.} Fixed point argument.\par
We consider the space $X\subset\mathcal D^{1,2}(\mathbb R^n)\times\mathcal D^{1,2}(\mathbb R^n)$, where
\[X=\{(u,v)^T:\mathcal K u=v;\ u,v\in\mathcal D^{1,2}(\mathbb R^n),\ ||u||_{*}+||v||_{*}<\infty\}.\]
The fixed point problem is written as
\begin{equation}
\phi_1:=
\left(
\begin{aligned}
&\bar\phi_1\\
&\hat\phi_1
\end{aligned}
\right)
=
\left(
\begin{aligned}
&\bar T(\bar\zeta_1E+\gamma\bar{\mathcal N})\\
&\hat T(\hat\zeta_1E+\gamma\hat{\mathcal N})
\end{aligned}
\right)
=:
\left(
\begin{aligned}
&\bar{\mathcal M}(\bar\phi_1)\\
&\hat{\mathcal M}(\hat\phi_1)
\end{aligned} 
\right)
=:\mathcal M(\phi_1)
\end{equation}

We thus focus on the first equation. Now we define
\[f_1:=p\bar\zeta_1(|u_*|^{p-1}-U_1^{p-1})\bar\phi_1,\ f_2:=(\bar\zeta_1-1)\bar U_1^{p-1}\bar\phi_1,\]
\[f_3:=p\bar\zeta_1|u_*|^{p-1}\Psi(\bar\phi_1),\ f_4:=\bar\zeta_1N(\phi),\ f_5:=\bar\zeta_1 E,\]
then we deal with each $f_i$ respectively.\par
From now on, we denote $\tilde f(y)=C\lambda^{\frac{n+2}2}f(\bar\xi_1+\mu y)$.
For $f_1$, we now consider $\tilde f_1$. By the pointwise estimate around $\bar \xi_1$, i.e.,
\[\left||u_*|^{p-1}-\bar U_1^{p-1}\right|\leq CU_1^{p-2}|u_*-\bar U_1|\text{ in }\bar B_1,\]
using the summation, we have
\begin{equation}
|\tilde f_1|\leq C\mu^{\frac{n-2}2}U^{p-2}|\bar{\bar\phi}_1|\leq C\mu^{\frac{n-2}2}U^{p-1}||\bar{\bar\phi}_1||_{*}\text{ in } B(0,2(k\mu)^{-1}\delta).
\end{equation}
Then an analogous calculation as in Section 4, we have
\[\|\tilde f_1\|_{**}\leq C \mu^{\frac n{2q}}||\bar{\bar\phi}_1||_*.\]
For $\tilde f_2$, we have 
\begin{equation}
|\tilde f_2|\leq CU^p||\bar{\bar\phi}_1||_*,
\end{equation}
then similarly,
\[\|\tilde f_2\|_{**}\leq C \mu^{\frac n{2q}}||\bar{\bar\phi}_1||_*.\]
As for $f_3$, we have calculated in Propostion 5 that
\[\|\psi\|_{*} \leq C\left(\|\bar{\bar{\phi}}_{1}\|_{*} +||\hat{\hat\phi}_1||_* + k^{1-\frac{n}{q}}\right),\]
then
\[\|\tilde f_3\|_{**} \leq C\mu^{\frac n{2q}}\left(\|\bar{\bar{\phi}}_{1}\|_{*} +||\hat{\hat\phi}_1||_* + k^{1-\frac{n}{q}}\right).\]
For $f_4$, directly writing down the main term
\[|\tilde f_4(y)|\leq CU^{p-2}(y)(|\bar{\bar \phi}_1(y)|^2+\mu^{n-2}|\psi(\bar\xi_1+\mu y)|^2),\ |y|\leq \frac{\delta}{\mu k},\]
and integrating on both sides gives
\[||\tilde f_4||_{**}\leq C\mu^{\frac n{2q}}(\|\bar{\bar\phi}_1\|_*+k^{1-\frac nq})\]
And for $f_5$, directly applying the result of Section 4, we have 
\[||\tilde f_5||_{**}\leq C\mu^{\frac n{2q}}.\]
In summary, we have 
\[\|\bar{\bar{\mathcal N}}\|_{**} \leq C\mu^{\frac n{2q}}\left(\|\bar{\bar{\phi}}_{1}\|_{*} +||\hat{\hat\phi}_1||_* + k^{1-\frac{n}{q}}\right).\]
Meanwhile, for the second equation, the process is almost the same. Computation shows 
\[\|\bar{\bar{\mathcal N}}\|_{**} \leq C\lambda^{\frac n{2q}}\left(\|\bar{\bar{\phi}}_{1}\|_{*} +||\hat{\hat\phi}_1||_* + k^{1-\frac{n}{q}}\right).\]
Moreover, we can also show that the map is contracting on a sufficiently small ball at the origin by a tautological proof, from which we also have, combining the previous calculation,
\[||\bar{\bar\phi}_1||_{*}\leq Ck^{-\frac{2n}{q}}, ||\hat{\hat\phi}_1||_*\leq Ck^{-\frac{2n}{q}}.\]
Repeating the process for dimension 3, we also have the following results,
\[||\bar{\bar\phi}_1||_{*}\leq Ck^{-1}\log^{-1}k;\ ||\bar{\bar{\mathcal{N}}}(\bar\phi_1,\hat\phi_1,\phi)||_{**}\leq Ck^{-2}\log^{-2}k,\]
which finishes the proof.\qed
\begin{remark}
To prove the case corresponding to $m=0$, one needs to replace the parameter of $u_{*,1}$, then repeating the proof of Proposition 6 and 7, to give the outer and inner solutions $\bar\phi_{0,1},\hat\phi_{0,1},\psi_{0}$. The only two things that deserves attention are that the condition (11) should be replaced with $\bar\psi_{0}=K\bar\psi_{0}$, i.e., the authentic Kelvin transform, and the relationship of $\hat h$ and $\bar h$ in the first step in the proof of Proposition 8 is adapted to  $$\hat h(y)=K(\bar h)(y)=|y|^{-2-n}\bar h(y/|y|^2),$$ since $u_{*,0}$ is Kelvin invariant.
\end{remark}
\section{Symmetry Decomposition}
Prior to the final reduction, it is necessary to present the symmetry decomposition of $E$, $\bar\phi_1$, as well as $\psi$. This step is essential for computing $\bar c_1$,  as the existing symmetry alone is insufficient to directly derive the required formula.
In this context, the following asymptotic expansion up to the linear term will also be useful. 
\begin{equation}
\begin{aligned}
{\bar U}_j(\mu y+\bar \xi_1)=\frac{(2\mu)^\frac{n-2}{2}}{|\bar\xi_1-\bar\xi_j|^{n-2}}\left(1-\frac{(n-2)(y,\bar\xi_1-\bar\xi_j)}{|\bar\xi_1-\bar\xi_j|^2}\mu+O\left(\frac{\mu^2(1+|y|^2)}{|\bar\xi_1-\bar\xi_j|^2}\right)\right);\\
{\hat U}_j(\mu y+\bar \xi_1)=\frac{(2\lambda)^\frac{n-2}{2}}{|\bar\xi_1-\hat\xi_j|^{n-2}}\left(1-\frac{(n-2)(y,\bar\xi_1-\hat\xi_j)}{|\bar\xi_1-\hat\xi_j|^2}\mu+O\left(\frac{\mu^2(1+|y|^2)}{|\bar\xi_1-\hat\xi_j|^2}\right)\right);\\
U(\mu y+\bar\xi_1)=U(\bar\xi_1)\left(1-(n-2)\frac{(y,\bar\xi_1)}{1+|\bar\xi_1|^2}\mu+O\left(\frac{\mu^2|y|^2}{1+|\xi_1|^2}\right)\right).\ \ \ \ \ \ \ \ \ \ 
\end{aligned}    
\end{equation}
We see that the constant term is, of course, even with respect to $y_i$.
The following are the symmetry decomposition theorems. Indeed, their proofs follow almost the same arguments as those for Propositions 2.2, Proposition 4.7,4.8,4.9 in \cite{MEDINA2021}.
\begin{proposition}
For the error operator, we have the following decomposition
\[E(\bar\xi_1+\mu y)=E^s(\bar\xi_1+\mu y)+E^*(\bar\xi_1+\mu y).\]
where $E^s(\bar\xi_1+\mu y)$ is even with respect to $y_i$, and $E^*(\bar\xi_1+\mu y)$ satisfies, for sufficiently large $k$,
\begin{equation}
|\mu^{\frac{n+2}2}E^*(\bar\xi_1+\mu y)|\leq\left\{
\begin{aligned}
    C\frac{\mu^{\frac{n-2}2}}{k(1+|y|^3)},\ \ \ \ \ \ \ \ \ \ \ &n\geq4;\\
    C\frac{\mu^{\frac{1}2}}{k\log^3k(1+|y|^3)},\ &n=3;
\end{aligned}
\right.
\ \ \ \ \ |y|<\frac{\delta}{\mu k}.
\end{equation}
\end{proposition}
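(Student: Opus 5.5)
Near $\bar\xi_1$ the point $\bar\xi_1=(r,0,0,\dots,0)$ lies on the $y_1$-axis. The reflection $y_2\mapsto -y_2$ preserves both polygons: the inner one because its vertices are $re^{i2\pi j/k}$, the outer one because $\{e^{i\pi(2j+1)/k}\}$ is invariant under conjugation. The ansatz is also even in $y_3,\dots,y_n$. So in the blown-up variable $y$ the function $\tilde E(y):=\mu^{\frac{n+2}2}E(\bar\xi_1+\mu y)$ is already even in $y_2,\dots,y_n$, and the only direction with no symmetry is $y_1$, the radial direction. I will therefore set
\[E^s(\bar\xi_1+\mu y):=\tfrac12\big(E(\bar\xi_1+\mu y)+E(\bar\xi_1+\mu y^\sharp)\big),\qquad y^\sharp=(-y_1,y_2,\dots,y_n),\]
and $E^*:=E-E^s$, the part that is odd in $y_1$. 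Then $E^s$ is even in every $y_i$ by construction, and it remains to bound $\tilde E^*(y)=\frac12(\tilde E(y)-\tilde E(y^\sharp))$ on $|y|<\delta/(\mu k)$.

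For the bound I expand $\tilde E$ as in the first-approximation section. Writing $W:=U-\sum_{j\ge2}\bar U_j-\sum_j\hat U_j$ evaluated at $\bar\xi_1+\mu y$, we get
\[\gamma^{-1}\tilde E=p\,U(y)^{p-1}\mu^{\frac{n-2}2}W+O\Big(\mu^{n-2}U^{p-2}W^2+\mu^{\frac{n+2}2}(\cdots)\Big).\]
The factor $U(y)^{p-1}$ is even. Hence $\tilde E^*$ is governed by the odd part of $\mu^{\frac{n-2}2}W(\bar\xi_1+\mu y)$, and I compute this odd part with the linear expansions of $U$, $\bar U_j$, $\hat U_j$ displayed before the statement. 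The zeroth-order terms are even and cancel. The linear terms contribute $\mu\, y_1$ times a coefficient $A$, given by
\[A=-(n-2)\Big[\tfrac{U(\bar\xi_1)r}{1+r^2}-\sum_{j\ge2}\tfrac{(2\mu)^{\frac{n-2}2}(\bar\xi_1-\bar\xi_j)_1}{|\bar\xi_1-\bar\xi_j|^{n}}-\sum_j\tfrac{(2\lambda)^{\frac{n-2}2}(\bar\xi_1-\hat\xi_j)_1}{|\bar\xi_1-\hat\xi_j|^{n}}\Big].\]
The quadratic remainders are even in $y$ up to odd cubic corrections. So for $|y|\lesssim 1$ the odd part is $\mu^{\frac{n-2}2}\mu|A||y_1|$ plus lower-order terms. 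Once multiplied by $U^{p-1}\sim(1+|y|)^{-4}$ and combined with the region $|y|\gg1$, where I instead bound the odd part crudely by differences of the Newtonian tails, this gives a profile of the form $(1+|y|^3)^{-1}$.

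The main obstacle is the size of $A$. The sums over $j$ are lattice sums, and they are dominated by the near neighbours at distance $\sim 1/k$ (same circle) and $\sim d$ (the other circle). First, $(\bar\xi_1-\bar\xi_j)_1=r(1-\cos\frac{2\pi(j-1)}k)$ is quadratic in the angle, so $\sum_j\mu^{\frac{n-2}2}(\bar\xi_1-\bar\xi_j)_1|\bar\xi_1-\bar\xi_j|^{-n}\sim\mu^{\frac{n-2}2}k^{n-2}\cdot O(1)$. Second, the $\hat\xi_j$ sum has $(\bar\xi_1-\hat\xi_j)_1\approx -d$, and comparing it with an integral gives $\lambda^{\frac{n-2}2}k\,d^{-(n-2)}\cdot O(1)$. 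With the scalings $\mu\sim k^{-2}$ and $d\sim k^{-\frac{n-3}{n-2}}$, both contributions, together with $U(\bar\xi_1)=O(1)$, are $O(k)$ at most. Then $\mu|A|\lesssim \mu k\sim k^{-1}\cdot(\mu k^2)$, and this yields $\mu^{\frac{n-2}2}\mu|A|\le C\mu^{\frac{n-2}2}/k$ for $n\ge4$, up to the bounded factors $l,t$. I expect the delicate point to be exactly this balance: the leading $O(1)$ parts of these sums must be bounded (not cancelled) uniformly, with the Riemann-sum error controlled. For $n=3$ I will redo the same computation with $\mu\sim(k\log k)^{-2}$ and $d\sim1/\log k$; there the $\hat\xi$ sum carries a logarithm, which produces the extra $\log^{-3}k$ factor in the statement. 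I will follow the corresponding computation in Proposition 2.2 of \cite{MEDINA2021}, replacing their vertical offset $\tau$ by the planar radial offset $d$.
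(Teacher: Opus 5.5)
Your argument is correct, but your decomposition differs from the paper's. The paper defines $E^s$ by freezing $U$, $\bar U_j$, $\hat U_j$ at the constant terms of their expansions at $\bar\xi_1$. Then $\mu^{\frac{n+2}2}E^s(\bar\xi_1+\mu y)$ depends on $y$ only through $U(y)$, so it is radial. Every $y$-dependent correction goes into $E^*$, and the estimate is deferred to Medina--Musso \cite{MEDINA2021}.

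You instead take the exact parity projection in $y_1$, $y^\sharp=(-y_1,y_2,\dots,y_n)$. This uses the invariance of both polygons under $x_2\mapsto -x_2$, which you verified correctly for $m=1$. The two choices trade off as follows:
\begin{itemize}
\item The paper's $E^s$ is even without any symmetry of the configuration. But its $E^*$ is not odd: it contains even second-order terms of size $\mu^{\frac{n-2}2}U^{p-1}(y)(\mu k|y|)^2$. These essentially saturate the bound near $|y|\sim\delta/(\mu k)$ and must be estimated separately.
\item Your $E^*$ is exactly odd, so $\int E^sZ_1=0$ holds exactly.
\end{itemize}

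Your estimate also simplifies. Write $W=U-\sum_{j\ge2}\bar U_j-\sum_j\hat U_j$ and $\tilde E^*(y)=\mu^{\frac{n+2}2}E^*(\bar\xi_1+\mu y)$. The bound then follows from two facts valid on the whole ball:
\begin{itemize}
\item First, $|\tilde E^*(y)|\le CU^{p-1}(y)\,\mu^{\frac{n-2}2}|W(\bar\xi_1+\mu y)-W(\bar\xi_1+\mu y^\sharp)|$, plus the harmless odd part of the $\mu^{\frac{n+2}2}(\cdots)$ terms. This holds because $\mu^{\frac{n-2}2}|W|\le U/2$ there once $\delta$ is small.
\item Second, $|W(\bar\xi_1+z)-W(\bar\xi_1+z^\sharp)|\le 2|z_1|\sup_{B(\bar\xi_1,\delta/k)}|\partial_1W|$.
\end{itemize}
Since $|y|U^{p-1}(y)\lesssim(1+|y|)^{-3}$, this gives the profile directly. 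It replaces both your Taylor analysis for $|y|\lesssim1$ and the vague ``crude'' step for $|y|\gg1$.

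In this formulation the relevant quantity is $\sup_{B(\bar\xi_1,\delta/k)}|\partial_1W|$, not $A=\partial_1 W(\bar\xi_1)$. In fact $A=O(1)$ in every dimension, since both lattice sums you estimate are $O(1)$. The sup is dominated by the two nearest inner neighbours and is $\lesssim\mu^{\frac{n-2}2}k^{n-1}$. That is $O(k)$ for $n\ge4$ and $O(k/\log k)$ for $n=3$. Multiplying by $\mu$ gives exactly the stated factors $k^{-1}$ and $(k\log^3k)^{-1}$.

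Your explanation for $n=3$ is off, and I would correct it:
\begin{itemize}
\item The $\hat\xi$ sum contributes only $\lambda^{\frac12}k/d=O(1)$; the $\log(1/d)$ appears only in a lower-order piece.
\item A bound $O(k)$, as you use for $n\ge4$, would give $\mu^{\frac12}/(k\log^2k)$, one logarithm short.
\item The missing $\log^{-1}k$ comes from $\mu^{\frac12}\sim(k\log k)^{-1}$ in the nearest-neighbour gradient, not from the outer circle.
\end{itemize}
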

In fact, the symmetric part $E^s$ is given by replacing $E$ with its constant part in the asymptotic expansion, and then we denote the rest of the part as $E^*$. 

\begin{proposition}
For the inner solution $\bar\phi_1$, we have the following pointwise estimate
\begin{equation}
|\bar{\bar\phi}_1(y)|\leq C\frac{\mu^{\frac{n-2}2}}{k(1+|y|)^a},\ a=\left\{
\begin{aligned}
2,\ n\geq5;\\
1,\ n=4;\\
\alpha,\ n=3,
\end{aligned}
\right.
\end{equation}
where $\alpha\in(0,1)$. We also have the following decomposition for $\bar{\bar\phi}_1$
\[\bar{\bar\phi}_1=\bar{\bar\phi}_1^s+\bar{\bar\phi}_1^*.\]
where $\bar{\bar\phi}_1^s$ is even with respect to $y_i$, and $\bar{\bar\phi}_1^*$ satisfies, for sufficiently large $k$,
\begin{equation}
|\bar{\bar\phi}_1^*(y)|\leq\left\{
\begin{aligned}
    C\frac{\mu^{\frac{n-2}2}}{k(1+|y|)},\ \ \ \ \ \ \ \ \ \ \ \ \ &n\geq4;\\
    C\frac{\mu^{\frac{1}2}}{k\log^3k(1+|y|^\alpha)},\ &n=3,\alpha\in(0,1);
\end{aligned}
\right.
\end{equation}
\end{proposition}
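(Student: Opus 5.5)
The plan follows the scheme of Propositions 4.7--4.9 in \cite{MEDINA2021}, adapted to the planar configuration. We treat the inner problem for $\bar\phi_1$ as a fixed point of the map $\bar{\mathcal M}$ from the previous proposition, and run the contraction in a smaller, weighted class. This gives the pointwise bound first and then the splitting. We work with the rescaled function $\bar{\bar\phi}_1(y)=\mu^{\frac{n-2}2}\bar\phi_1(\mu y+\bar\xi_1)$, which solves
\[
\Delta \bar{\bar\phi}_1+p\gamma U^{p-1}\bar{\bar\phi}_1+\tilde f_1+\dots+\tilde f_5=0 \quad\text{in }\mathbb{R}^n,
\]
with the $\tilde f_i$ as in the proof of the previous proposition. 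The orthogonality conditions $\bar c_1=\bar c_{n+1}=0$ are assumed; the odd modes are automatic by symmetry.

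\emph{Pointwise bound.} We first show that the right-hand side $\tilde h=\sum\tilde f_i$ satisfies
\[
|\tilde h(y)|\le C\mu^{\frac{n-2}2}k^{-1}(1+|y|)^{-a-2}
\]
on $|y|<2\delta/(\mu k)$.
\begin{itemize}
\item For $\tilde f_5$ this follows from the inner estimate of Section 3: $|\tilde E|\le C\mu^{\frac{n-2}2}(1+|y|)^{-4}$. The factor $1/k$ comes from the contribution of the neighbouring bubbles, which we sum using $|\bar\xi_1-\bar\xi_j|\sim |j|/k$ and $|\bar\xi_1-\hat\xi_j|\gtrsim d+|j|/k$.
\item The terms $\tilde f_1,\tilde f_2,\tilde f_3,\tilde f_4$ are absorbed using the bounds on $\Psi$ and $N$ already established.
\end{itemize}
We then invert the linear operator with a weighted $L^\infty$ linear theory: the classical barrier argument for $\Delta+p\gamma U^{p-1}$ outside a large ball, combined with the $\|\cdot\|_*$ estimate and the orthogonality conditions. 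A decay $(1+|y|)^{-a-2}$ of the source yields decay $(1+|y|)^{-a}$ of the solution, as long as $a<n-2$. This gives $a=2$ for $n\ge5$, $a=1$ for $n=4$, and only $a=\alpha<1=n-2$ for $n=3$. Running the contraction in the space with norm $\sup (1+|y|)^a|\phi|$ gives the first estimate.

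\emph{Decomposition.} We split the source using Proposition 9 and the Taylor expansions displayed before it. Write $\tilde h=\tilde h^s+\tilde h^*$. Here $\tilde h^s$ collects the even parts: $\tilde E^s$ together with the terms built from the constant coefficients in the expansions of $\bar U_j,\hat U_j,U$ near $\bar\xi_1$. The remainder $\tilde h^*$ collects the linear-in-$y$ terms $\mu(y,\bar\xi_1-\bar\xi_j)/|\bar\xi_1-\bar\xi_j|^2$ and the higher-order terms.

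We then define $\bar{\bar\phi}_1^s$ as the solution of the linear problem with source $\tilde h^s$. This solution is even in each $y_i$ by the uniqueness in the linear theory, since evenness is preserved. We set $\bar{\bar\phi}^*_1=\bar{\bar\phi}_1-\bar{\bar\phi}^s_1$. Then $\bar{\bar\phi}^*_1$ solves the linear equation with source $\tilde h^*$ plus the nonlinear coupling from $N$ and $\Psi$. The coupling is quadratic or already of size $\mu^{\frac n{2q}}\|\cdot\|$, so it can be absorbed.

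For $\tilde h^*$, Proposition 9 gives $|\tilde h^*|\le C\mu^{\frac{n-2}2}k^{-1}(1+|y|)^{-3}$. The weighted linear theory with decay exponent $3$ for the source then yields $(1+|y|)^{-1}$ decay of the solution. In $n=3$ the exponent is capped at $\alpha$, and the extra factor $\log^{-3}k$ comes from $\mu^{1/2}\sim (k\log k)^{-1}$ and $d\sim\log^{-1}k$.

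\emph{Main obstacle.} The main obstacle is the estimate of the odd part coming from the interaction with the outer circle. In the planar configuration $\bar\xi_1-\hat\xi_j$ has a radial component of size $d$, which is much larger than the tangential spacing $1/k$. Summing over $j$, the term $\sum_j \lambda^{\frac{n-2}2}\mu\, y\cdot(\bar\xi_1-\hat\xi_j)/|\bar\xi_1-\hat\xi_j|^n$ is not obviously $O(\mu^{\frac{n-2}2}/k)$. The approach is to use the dihedral symmetry to cancel the tangential components pairwise. For the radial component in the $y_1$ direction, we compare it against $\mu\,\partial_r$ of the leading constant term. The relation \eqref{main-K} between $(\lambda,R)$ and $(\mu,r)$ should make the inner and outer contributions balance to the stated order, and this is exactly where the reduced equation for $\bar c_1$ in Section 6 originates. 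If this cancellation fails at the claimed order, we would instead keep the $y_1$-odd radial piece explicitly in the $E^*$ bookkeeping and only claim the weaker bound with $d$ in place of $k^{-1}$.
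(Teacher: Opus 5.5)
Your outline matches the cited Medina--Musso scheme. Two steps fail as written, and the ``main obstacle'' you single out is not a real obstacle.

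\textbf{The first pointwise estimate.} Your claimed bound $|\tilde h|\le C\mu^{\frac{n-2}{2}}k^{-1}(1+|y|)^{-a-2}$ is not justified. Section 3 only gives $|\tilde E|\le C\mu^{\frac{n-2}{2}}(1+|y|)^{-4}$, with no factor $k^{-1}$, and the neighbouring bubbles cannot supply it. On $|y|<\delta/(\mu k)$ the leading part of $\tilde E$ is $p\gamma U^{p-1}(y)\,\mu^{\frac{n-2}{2}}H(\bar\xi_1+\mu y)$, where $H=U-\sum_{j\ne1}\bar U_j-\sum_j\hat U_j$. By the summation formula of Section 6, the inner ring contributes a constant of order $l$, i.e.\ of order one, to $H(\bar\xi_1)$. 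The $y$-independent part can only become small through cancellation against $U(\bar\xi_1)$, that is, through the choice of $l$ in Section 6. You never invoke this, and it is not available at this stage, where $(l,t)$ ranges over a compact set. Worse, the even quadratic Taylor term is dominated by the two nearest neighbours: $|D^2H(\bar\xi_1)|\sim\mu^{\frac{n-2}{2}}k^{n}\sim k^2$. So $\tilde E$ contains an essentially degree-two-harmonic piece of size $\mu^{\frac{n-2}{2}}k^{-2}|y|^2U^{p-1}(y)$. For $n\ge5$, where you need source decay $(1+|y|)^{-4}$, this piece exceeds $\mu^{\frac{n-2}{2}}k^{-1}(1+|y|)^{-4}$ as soon as $|y|\gg\sqrt k$. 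The $\ell=2$ mode is untouched by the orthogonality conditions, and in that mode the source decays only like $|y|^{-2}$. Its response in $\bar{\bar\phi}_1$ therefore tends to a nonzero constant, of order $\mu^{\frac{n-2}{2}}k^{-2}$, times the spherical harmonic on $1\ll|y|\ll\delta/(\mu k)$. Hence no barrier argument can produce $\mu^{\frac{n-2}{2}}k^{-1}(1+|y|)^{-2}$ there. Your scheme honestly yields $C\mu^{\frac{n-2}{2}}(1+|y|)^{-a}$ for the full function; the extra factor $k^{-1}$ is available only for the $y_1$-odd part.

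\textbf{The decomposition.} You put all of $\tilde f_1,\dots,\tilde f_4$ into the equation for $\bar{\bar\phi}_1^*$. However, $\tilde f_3\approx p\gamma U^{p-1}\mu^{\frac{n-2}{2}}\psi(\bar\xi_1+\mu y)$ is linear in $\psi$, not quadratic. It contains the even piece $p\gamma U^{p-1}\mu^{\frac{n-2}{2}}\psi(\bar\xi_1)$, which is controlled only by $\|\psi\|_*\lesssim k^{1-\frac nq}\gg k^{-1}$. A bound of type $\mu^{\frac n{2q}}\|\cdot\|$ in $\|\cdot\|_{**}$ gives nothing pointwise. The clean route is the following. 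Take $\bar{\bar\phi}_1^*$ to be the $y_1$-odd part of $\bar{\bar\phi}_1$; this works because $\Delta+p\gamma U^{p-1}$ commutes with $y_1\mapsto-y_1$. Its source is then the odd part of the whole right-hand side. Bound that source using the decomposition of $E$ together with the decomposition of $\psi$ stated right after; the latter exists precisely for this purpose.

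\textbf{The ``main obstacle.''} No cancellation and no use of \eqref{main-K} is needed. The odd part of $H(\bar\xi_1+\mu y)$ is $O(\mu|y|)$ on the whole ball, because every radial coefficient is $O(1)$:
\begin{itemize}
\item from $U$, trivially;
\item from the inner ring, because $(\bar\xi_1-\bar\xi_j)_1=|\bar\xi_1-\bar\xi_j|^2/(2r)$ reduces the coefficient to $\mu^{\frac{n-2}{2}}\sum_{j\ne1}|\bar\xi_1-\bar\xi_j|^{2-n}=O(1)$;
\item from the outer ring, because by \eqref{sumB}, $\lambda^{\frac{n-2}{2}}\sum_j d\,|\bar\xi_1-\hat\xi_j|^{-n}\sim\lambda^{\frac{n-2}{2}}kd^{2-n}=O(1)$ at the chosen scale of $d$.
\end{itemize}
Hence the odd part of $\tilde E$ is $O(\mu^{\frac n2}|y|U^{p-1})$. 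Since $\mu\lesssim k^{-2}$, this is already better than the $k^{-1}$ you need. The near-balance of these three $O(1)$ coefficients is what the $\bar c_1$ equation of Section 6 measures; the pointwise bound does not require it. Your fallback, with $d$ in place of $k^{-1}$, would not prove the statement in any case.
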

The proofs for the two propositions is a simple repeat of that proven in \cite{MEDINA2021}. However, the following is different, due to the essential differences between our configuration and that of \cite{MEDINA2021}.
\begin{proposition}
    For the outer solution $\psi$, we have the following decomposition for ${\psi}$
\[\psi(\bar\xi_1+\mu y)=\psi^s(y)+\psi^*(y),\ \ y\in B(0,\frac\delta{\mu k})\]
where $\psi^s$ is even with respect to $y_1$, and $\psi^*$ satisfies
\begin{equation*}
|\psi^*(y)|\leq C\left(||\bar{\bar\phi}_1||_*+||\psi||_*+do_{k}(1)\right)\mu|y|(1+|y|).
\end{equation*}
\end{proposition}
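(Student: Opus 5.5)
We plan to split $\psi$ near $\bar\xi_1$ by a Taylor expansion in the $y_1$-direction (the radial direction at $\bar\xi_1=(r,0,\dots,0)$ after rotation). Concretely, we set
\[
\psi^s(y):=\tfrac12\big(\psi(\bar\xi_1+\mu y)+\psi(\bar\xi_1+\mu \tilde y)\big),\qquad \tilde y=(-y_1,y_2,\dots,y_n),
\]
and $\psi^*:=\psi(\bar\xi_1+\mu y)-\psi^s(y)$. Then $\psi^s$ is automatically even in $y_1$, and the task reduces to bounding the odd part
\[
\psi^*(y)=\tfrac12\big(\psi(\bar\xi_1+\mu y)-\psi(\bar\xi_1+\mu\tilde y)\big).
\]
Since $\psi^*(0)=0$ and it vanishes on $\{y_1=0\}$, we will write
\[
\psi^*(y)=\tfrac12\int_{-1}^{1}\mu y_1\,\partial_1\psi(\bar\xi_1+\mu(ty_1,y'))\,dt ,
\]
so it suffices to bound $\partial_1\psi$ in the ball $B(\bar\xi_1,2\delta/k)$. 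Two contributions appear there: a part of size $\mu|y|$, and a part coming from how far $\partial_1\psi(\bar\xi_1)$ is from zero.

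\emph{Step 1: gradient bound.} We will use the outer equation \eqref{outereq}, which on $B(\bar\xi_1,2\delta/k)$ reads $\Delta\psi=g$. The right-hand side $g$ collects the potential terms $V\psi$, the coupling $|u_*|^{p-1}(1-\bar\zeta_j)\bar\phi_j$, and $(1-\sum\zeta)(E+N)$. Note that $\bar\zeta_1\equiv1$ on $B(\bar\xi_1,\delta/k)$, so the singular part of the error is cut away there. Interior elliptic estimates on balls of radius comparable to $1/k$, combined with the bounds already obtained in the proof of Proposition 6 for $V\psi$, $E$ and $N$ in $\|\cdot\|_{**}$, should give
\[
|\nabla\psi|\le C\big(\|\psi\|_*+\|\bar{\bar\phi}_1\|_*+o_k(1)\big)
\]
near $\bar\xi_1$. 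This yields the linear-in-$\mu|y|$ factor.

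\emph{Step 2: quadratic factor $(1+|y|)$.} To get the extra factor we will Taylor expand one more order. Because $|y|$ ranges up to $\delta/(\mu k)$, we need a Hessian bound $|D^2\psi|\le C(\cdots)$, giving a term of the form $\mu^2|y|^2$. Since $\mu\le \mu|y|\cdot$(bounded) only when $|y|\ge1$, we combine the two expansions so that the total is bounded by $\mu|y|(1+|y|)$ times the constant. The Hessian bound is again obtained from local $W^{2,q}$ and Schauder estimates, using that $U$ and the bubbles $\hat U_j$, $\bar U_j$ for $j\ne1$ are smooth near $\bar\xi_1$.

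\emph{Step 3: the first-order term $\partial_1\psi(\bar\xi_1)$.} This is the main obstacle. In \cite{MEDINA2021}, evenness in $x_3$ made the relevant derivative vanish. Here the radial direction $y_1$ has no such reflection symmetry, since the two circles have different radii. We will use the invariance $\psi=\mathcal K\psi$ from Proposition 6: $\mathcal K$ maps $\bar\xi_1$ to (a rotation of) $\hat\xi_1$, and it reverses the radial derivative up to an explicit factor. Differentiating the identity $\psi(y)=|y|^{2-n}\psi(\mathcal Ty/|y|^2)$ along the radial line through $\bar\xi_1$ relates $\partial_r\psi(\bar\xi_1)$ to $\partial_r\psi$ at the point $\mathcal T\bar\xi_1/|\bar\xi_1|^2$. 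That point lies on the circle of radius $1/r$, at distance $O(d)$ from the unit circle. Averaging the two expressions and using the Lipschitz bound from Step 1 over a radial segment of length $O(d)$, we expect
\[
|\partial_1\psi(\bar\xi_1)|\le C\,d\,o_k(1)+C\mu\big(\|\psi\|_*+\|\bar{\bar\phi}_1\|_*\big).
\]
Multiplied by $\mu|y_1|$, this gives the $d\,o_k(1)\,\mu|y|$ term in the statement.

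If this symmetry argument is too lossy, the fallback is to estimate $\partial_r\psi$ directly through the representation $\psi=-T(\cdots)$, differentiating the Green kernel against the source, which is nearly symmetric under $r\mapsto 1/r$ modulo $O(d)$.
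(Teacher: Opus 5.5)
Your even/odd splitting in $y_1$ is a legitimate (indeed cleaner) choice of $\psi^s$. Reducing matters to $\partial_1\psi(\bar\xi_1)$ plus a remainder also matches the paper: there $\psi^*$ is governed by the dipole coefficient $\int B_1W$, with $B_1(x)=-(x-\bar\xi_1)_1|x-\bar\xi_1|^{-n}$, which is essentially $\partial_1\psi(\bar\xi_1)$.

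\textbf{The gap is Step 3.} That step carries the whole content of the proposition, namely the bound $d\,o_k(1)$ for the contribution of $E$. The map $\mathcal K$ does not fix $\bar\xi_1$. Differentiating $\psi=\mathcal K\psi$ along the ray through $\bar\xi_1$ gives only
\[
\partial_r\psi(\bar\xi_1)=(2-n)r^{1-n}\psi(q)-r^{-n}\partial_r\psi(q),\qquad q=\mathcal T\bar\xi_1/|\bar\xi_1|^2 .
\]
Here $q$ lies within $O(\mu^2)$ of an outer centre, on the ray at angle $-\pi/k$ rather than on the ray through $\bar\xi_1$. So the identity says the radial derivative at the inner bubble is minus the one at the outer bubble, modulo a value term. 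It holds whatever the size of $\partial_r\psi(\bar\xi_1)$: it transfers information between the two circles but never constrains $\partial_r\psi(\bar\xi_1)$ itself.

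To conclude, you would need $\partial_r\psi(\bar\xi_1)\approx\partial_r\psi(q)$, i.e. an $o_k(1)$ Lipschitz bound for $\nabla\psi$ on a region of width $\sim d$ containing both bubble cores. But by the identity, $\partial_r\psi(\bar\xi_1)-\partial_r\psi(q)=(1+r^n)\partial_r\psi(\bar\xi_1)+O(|\psi(q)|)$. Such a Lipschitz bound therefore already contains the $d\,o_k(1)$ estimate you are trying to prove, and nothing in Steps 1--2 supplies it. Incidentally, in \cite{MEDINA2021} the evenness in $x_3$ exchanges the two circles exactly as $\mathcal K$ does here, so it does not make the relevant derivative vanish there either. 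That is why the analogous decomposition of $\psi$ is needed in that paper too.

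\textbf{What the paper does instead.} The missing ingredient is the explicit estimate $\int B_1\Lambda E=o_k(1)\,d$, with $\Lambda=1-\sum_i(\bar\zeta_i+\hat\zeta_i)$. The paper replaces $E$ on $\mathrm{supp}\,\Lambda$ by $\tilde E=-pU^{p-1}\sum_j(\bar U_j+\hat U_j)-\sum_j(\bar U_j^p+\hat U_j^p)$ and splits the integral into four pieces:
\begin{itemize}
\item $I_1$, the inner bubbles, including $\bar U_1$ outside $\bar B_1$. These are controlled by sign properties of spherical averages of $B_1$, Fubini, the two-point formula $\int (x-a)_1|x-a|^{-p}|x-b|^{-q}\,dx=C\,(a-b)_1|a-b|^{n-p-q}$, and the geometric fact $(\bar\xi_1-\bar\xi_j)_1=2r\sin^2\frac{\pi(j-1)}{k}$.
\item $I_2$, the cutoff corrections.
\item $I_3$, the outer circle, which contributes $k^{3-n}d^{4-n}=(kd)^{3-n}d=o(d)$ because $kd\to\infty$.
\item $I_4$, the $p$-th power terms.
\end{itemize}
Your fallback, differentiating the Green kernel against the source, is this route. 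What makes it work, however, is these estimates, not an approximate $r\mapsto 1/r$ symmetry of the source.

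\textbf{Steps 1--2 are also not correct as stated.}
\begin{itemize}
\item Generic interior estimates on balls of radius $\sim 1/k$ only give $|\nabla\psi|\lesssim k\sup|\psi|+\cdots$, losing a factor $k$.
\item In $B(\bar\xi_1,\delta/k)$ the outer equation reads $\Delta\psi=-p\gamma|u_*|^{p-1}\big(\sum_{j\neq1}\bar\phi_j+\sum_j\hat\phi_j\big)$, with $|u_*|^{p-1}\approx\bar U_1^{p-1}\sim\mu^{-2}$ near $\bar\xi_1$. So $\psi$ is not smooth at scale $1/k$ there, and $D^2\psi$ can be as large as $\mu^{-2}\|\bar{\bar\phi}_1\|_*$ in the core.
\end{itemize}
A bound through $\sup|D^2\psi|$ therefore fails for $|y|\lesssim 1$. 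You would have to use that this concentrated part is, to leading order, radial about $\bar\xi_1$ and so drops out of the odd part.
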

\begin{proof}
We consider the inner problem \eqref{outereq}. Using the Newton potential, the function $\psi(\bar\xi_1+\mu y)$ can be written as
\begin{equation}
    \psi(\bar\xi_1+\mu y)=\int |\mu y+\bar\xi_i-x|^{2-n}W(\psi)(x)dx,
\end{equation}
where 
\[W(\psi)=p\gamma|u_*|^{p-1}((1-\bar\zeta_j)\bar\phi_j+(1-\hat\zeta_j)\hat\phi_j)+M(\psi)+V\psi-p\gamma U^{p-1}\psi.\]
For a convolution operator, if the kernel itself is even (resp., odd), then the convolution is also even (resp., odd). Therefore, we need to make a decomposition for the Newton kernel via Taylor expansion. That is, to be more specific,
\[|\mu y+\bar\xi_i-x|^{2-n}=A(x,y)+(n-2)\mu y_1B_1(x)+B_2(x,y),\]
where
\[A(x,y):=\frac{1}{|x-\bar\xi_1|^{n-2}}\left(1-\frac{n-2}{2}\frac{\mu^2|y|^2+2\mu{\sum_{i=2}^ny_i(x-\bar\xi_1)_i}}{|x-\bar\xi_1|^2}\right);\]
\[B_1(x):=-\frac{(x-\bar \xi_1)_1}{|x-\bar\xi_1|^{n}};\  B_2:=O\bigg(\frac{\mu^2|y|^2+2\mu{(y_i,x-\bar\xi_1)}}{|x-\bar\xi_1|^{n+2}}\bigg).\]
Then we define
\[\psi^s(y)=\int A(x,y)W(\psi)(x)dx,\ \psi^*(y)=\int ((n-2)y_1B_1+B_2)(x,y)W(\psi)(x)dy,\]
where $\psi^s$ is even with respect to $y_1$. 

Now we focus on the point-wise estimate of $\psi^*$. Calculating as in Proposition 4.9 in \cite{MEDINA2021}, we have
\[\bigg|\int B_{1}(x)(p\gamma|u_*|^{p-1}((1-\bar\zeta_j)\bar\phi_j+(1-\hat\zeta_j)\hat\phi_j)+V\psi-p\gamma U^{p-1}\psi)\bigg|=||\psi||_{*}+||\bar{\bar\phi}||_*),\]
\[\bigg|\int B_{1}(x)\bigg(1-\sum_{i=1}^k(\bar\zeta_i+\hat\zeta_i)\bigg)N(\phi)\bigg|\leq C(||\bar{\bar\phi}_1||_*+||\psi_1||_*),\]
and 
\[\int B_{2}(x,y)W(\psi)(x)dx=o_k(1)\mu d|y|^2.\]
We only need to show that the remaining terms $\int B_1(1-\sum_{i=1}^k\bar\zeta_i-\sum_{i=1}^k\hat\zeta_i)E=o_k(1)d$.

On the support of $\Lambda=1-\sum_{i=1}^k\bar\zeta_i-\sum_{i=1}^k\hat\zeta_i$, we have shown that 
\begin{align*}
    |E-\tilde E|&\leq CU^{p-2}\Big(\sum_{j=1}^k(\bar U_j+\hat U_j)\Big)^2+C\Big(\sum_{j=1}^k(\bar U_j+\hat U_j)\Big)^p\\
    &\leq CU^{p-2}\sum_{j=1}^k(\bar U_j^2+\hat U_j^2)+C\sum_{j=1}^k(\bar U_j^p+\hat U_j^p)
\end{align*}
where 
\begin{align*}
    \tilde E=-pU^{p-1}\sum_{j=1}^k(\bar U_j+\hat U_j)-\sum_{j=1}^k(\bar U_j^p+\hat U_j^p).
\end{align*}
% We will estimate the main term $\int B_1(1-\sum_{i=1}^k\bar\zeta_i-\sum_{i=1}^k\hat\zeta_i)\tilde E=B_{11}+B_{12}$ in detail, where 
% \[B_{11}:=\int B_{1}(x,y)\bigg(1-\sum_{i=1}^k\bar\zeta_i\bigg)\tilde E,\quad B_{12}:=\sum_{i=1}^k\int B_{1}(x,y)\hat\zeta_i\tilde E,\]
We rewrite $\int B_1(1-\sum_{i=1}^k\bar\zeta_i-\sum_{i=1}^k\hat\zeta_i)\tilde E=\sum_{i=1}^4I_i$ where
\begin{align*}
    &I_1:=\sum_{j=1}^k\int B_1\bigg(1-\bar\zeta_j\bigg)pU^{p-1}\bar U_j,\quad I_2:=-\sum_{j=1}^k\int B_1\Big(\sum_{i=1,i\neq j}^k\bar \zeta_i+\sum_{i=1}^k\hat \zeta_i\Big)pU^{p-1}\bar U_j
\end{align*}
\begin{align*}
    I_3:=\int B_1\Lambda pU^{p-1}\sum_{j=1}^k\hat U_j,\quad I_4=\int B_1 \Lambda\sum_{j=1}^k(\bar U_j^p+\hat U_j^p)
\end{align*}
% \begin{align*}
%     & I_5:=\int B_1\bigg(1-\sum_{i=1}^k(\bar\zeta_i+\hat \zeta_i)\bigg)\sum_{j=1}^k\bar U_j^p,\quad I_6:=\int B_1\bigg(1-\sum_{i=1}^k(\bar\zeta_i+\hat \zeta_i)\bigg)\sum_{j=1}^k\hat U_j^p
% \end{align*}
% \begin{align*}
%     &I_1:=\sum_{j=1}^k\int B_1\bigg(1-\bar\zeta_j\bigg)pU^{p-1}\bar U_j,\quad I_2:=\int B_1\bigg(1-\sum_{i=1}^k\hat\zeta_i\bigg)pU^{p-1}\sum_{j=1}^k\hat U_j\\
%     &I_3:=-\sum_{j=1}^k\int B_1\Big(\sum_{i=1,i\neq j}^k\bar \zeta_i+\sum_{i=1}^k\hat \zeta_i\Big)pU^{p-1}\bar U_j,\quad I_4:=-\int B_1\sum_{i=1}^k\bar \zeta_ipU^{p-1}\sum_{j=1}^k\hat U_j\\
%     & I_5:=\int B_1\bigg(1-\sum_{i=1}^k(\bar\zeta_i+\hat \zeta_i)\bigg)\sum_{j=1}^k\bar U_j^p,\quad I_6:=\int B_1\bigg(1-\sum_{i=1}^k(\bar\zeta_i+\hat \zeta_i)\bigg)\sum_{j=1}^k\hat U_j^p
% \end{align*}
%The first term is given by 
%\[\bigg|\int B_{1}\bigg(1-\sum_{i=1}^k\bar\zeta_i\bigg)U^{p-1}\bar U_j\bigg|\leq\bigg|\int B_1(1-\bar\zeta_j)U^{p-1}\bar U_j\bigg|+\sum_{l=1,j\neq j}^k\bigg|\int B_{1}\bar\zeta_lU^{p-1}\bar U_j\bigg|.\]
Let us deal with $I_1$ first.
When $j=1$, standard estimates yield
\begin{align*}
    0\leq \int B_1(1-\bar \zeta_1)U^{p-1}\bar U_1\leq \mu^{\frac{n-2}{2}}\int_{|x-\bar\xi_1|>\delta/k}\frac{1}{|x-\bar\xi_1|^{2n-3}} dx=o_k(1)d.
\end{align*}
For $j>1$, by the decay property of $U$ and $\bar U_j$, it is easy to see that the following holds for any $\rho>0$,
\[\int_{\partial B(\bar \xi_j,\rho)}B_1U^{p-1}\bar U_j\geq0,\quad \int_{\partial B(\bar \xi_j,\rho)}B_1 U^{p-1}>0.\]
Thus the Fubini theorem implies that
\begin{align*}
0\leq \int B_1(1-\zeta_j)U^{p-1}\bar U_j\leq \int B_1U^{p-1}\bar U_j\leq \int B_1U^{p-1}\frac{\mu^{\frac{n-2}2}}{|x-\bar\xi_j|^{n-2}},
\end{align*}
And again, by Fubini theorem, and the fact that\(\int_{\partial B(0,\rho)}B_1{|x-\bar\xi_j|^{2-n}}>0\) , we have
\[\int B_1U^{p-1}\frac{\mu^{\frac{n-2}2}}{|x-\bar\xi_j|^{n-2}}\leq C\int B_1\frac{\mu^{\frac{n-2}2}}{|x-\bar\xi_j|^{n-2}}\leq C\mu^{\frac{n-2}{2}}\int\frac{(x-\bar\xi_1)_1}{|x-\bar \xi_1|^{n}|x-\bar \xi_j|^{n-2}} \]
Applying the following fact that
\begin{align*}
    \int_{\mathbb{R}^n}\frac{(x-a)_1}{|x-a|^p|x-b|^q}dx=C(p,q,n)\frac{(a-b)_1}{|a-b|^{p+q-n}},\quad \text{if }p+q>n+1\ \ \ \ \ \ \ \ \ \ \ \\
   \left|\int_{B(a, \rho)}\frac{(x-a)_1}{|x-a|^p|x-b|^q}dx\right|\leq C(p,q,n,\rho)(|\log|a-b||+1),\quad \text{if }p+q=n+1,\ 
\end{align*}
for any $\rho\gg|a-b|$, $0<p\leq n$, $0\leq q<n$ and $a\neq b$, then
\begin{align*}
    \bigg|\int B_1(1-\bar\zeta_j)U^{p-1}\bar U_j\bigg|\leq\begin{cases}
        C\mu^{\frac{n-2}2}|\bar\xi_1-\bar\xi_j|^{2-n}|(\bar\xi_1-\bar\xi_j)_1|&\text{if }n\geq 4\\
        C\mu^{\frac{1}2}|\log|\bar\xi_1-\bar\xi_j||&\text{if }n=3.
    \end{cases}
\end{align*}

Using the summation formula, we have 
\begin{equation*}
    \bigg|\sum_{j=2}^k\int B_1(1-\bar\zeta_j)U^{p-1}\bar U_j\bigg|\leq
    \begin{cases}
        C\log^{-1}k,&n=3\\
        Ck^{-1},&n=4\\
        Ck^{-2}\log \log k,& n=5\\
        Ck^{-2},& n\geq6
    \end{cases}
    =o_k(1) d.
\end{equation*}
Thus we have proved $|I_1|=o_k(1) d$. 

Consider $I_2$. First when $j=1$, one has 
\begin{align*}
    \left|\int B_1\Big(\sum_{i=2}^k\bar \zeta_i+\sum_{i=1}^k\hat \zeta_i\Big) U^{p-1}\bar U_1\right|\leq \mu^{\frac{n-2}{2}}\int_{|x-\bar\xi_1|>\delta/k}\frac{1}{|x-\bar\xi_1|^{2n-3}} dx=o_k(1)d.
\end{align*}
Second, when $j>1$
% \begin{align*}
%    0\leq  \int B_1\sum_{i=1,i\neq j}^k\bar \zeta_i U^{p-1}\bar U_j\leq \int B_1U^{p-1}\bar U_j\leq C\mu^{\frac{n-2}{2}}\int \frac{1}{|x-\bar\xi_1|^{n-1}|x-\hat\xi_j|^{n-2}}
% \end{align*}
\begin{align*}
\left|\int B_1\bar \zeta_i U^{p-1}\bar U_j\right|&\leq \begin{cases}C\mu^{\frac{n-2}{2}}|\bar \xi_i-\bar \xi_1|^{1-n}|\bar \xi_i-\bar \xi_j|^{2-n}k^{-n},&\text{if } i\neq 1\\
C\mu^{\frac{n-2}{2}}|\bar \xi_1-\xi_j|^{2-n}k&\text{if }i=1
\end{cases}\\
    \left|\int B_1\hat \zeta_i U^{p-1}\bar U_j\right|&\leq C\mu^{\frac{n-2}{2}}|\hat \xi_i-\bar \xi_1|^{1-n}|\hat \xi_i-\bar \xi_j|^{2-n}|B(\hat \xi_i,\delta/k)|
\end{align*}
Summing with respect to $j$, then $i$, yields
\begin{align*}
    |I_2|\leq o_k(1) d +\begin{cases}
        C\mu^{\frac{n-2}{2}}k^{n-3},&\text{if }n\geq 4\\
        C\mu^{\frac{1}{2}}\log k,&\text{if } n=3
    \end{cases}=o_k(1) d 
\end{align*}
Let us consider $I_3$. For $I_3$, we have
\begin{align*}
    |I_3|\leq C\sum_{j=1}^k\int \frac{1}{|x-\bar\xi_1|^{n-1}}\frac{\lambda^{\frac{n-2}{2}}}{|x-\hat \xi_1|^{n-2}}.
\end{align*}
% \[\bigg|\int B_{1}(x,y)\bigg(1-\sum_{i=1}^k\bar\zeta_i\bigg)U^{p-1}\hat U_j\bigg|\leq\int |B_1|U^{p-1}\hat U_j\leq C\mu^{\frac n2}|y|\int \frac{1}{|x-\bar\xi_1|^{n-1}|x-\hat\xi_j|^{n-2}}\]
% For $n\geq4$, standard calculation gives
% \begin{equation*}
% \bigg|\int B_1U^{p-1}\hat U_j\bigg|\leq C\mu^{\frac n2}|y|\int \frac{U^{p-1}}{|x-\bar\xi_1|^{n-1}|x-\hat\xi_j|^{n-2}}\leq\left\{
% \begin{aligned}
%     C\mu^{\frac32}\log|\bar\xi_1-\hat\xi_j||y|,\ n=3;\\
%     C\mu^{\frac n2}|\bar\xi_1-\hat\xi_j|^{3-n}|y|, \ \ n\geq4,
% \end{aligned}\right.
% \end{equation*}
Applying the following fact that
\begin{align*}
    \int_{\mathbb{R}^n}\frac{1}{|x-a|^p|x-b|^q}dx=\frac{C(p,q,n)}{|a-b|^{p+q-n}} ,\quad \text{if }p+q>n,\ \ \ \ \ \ \ \ \ \ \ \ \ \\
    \left|\int_{B(a,\rho)}\frac{1}{|x-a|^p|x-b|^q}dx\right|\leq C(p,q,\rho,n)(|\log |a-b||+1),\quad \text{if }p+q=n,
\end{align*}
where $\rho\gg|a-b|$, $0<p\leq n$, $0\leq q<n$ and $a\neq b$, 
%for any $0\leq p,q<n$ and $a\neq b$
%where the second inequality is due to a standard dilation argument shown below. 
% \textcolor{blue}{Consider the general model, say
% \[J(a,b)=\int\frac{1}{|x-a|^p|x-b|^q}dx,\ p+q> n,\ 0<p,q< n,\ a,b\in B_2.\]
% A substitution $x=|a-b| z+a$ gives
% \[J(a,b)=|a-b|^{n-p-q}\int\frac{1}{|z|^p|z+e(a,b)|^q}dz;\ e(a,b)=\frac{a-b}{|a-b|}.\]
% A direct application of the convolution theorem of Fourier transform, we have 
% \[\int\frac{1}{|z|^p|z+e(a,b)|^q}dz=C(p,q,n)\frac{1}{|e(a,b)|^{p+q-n}}=C(p,q,n).\]
%  Consider the truncated model, say
% \[J_T(a,b)=\int_{B_2}\frac{1}{|x-a|^p|x-b|^q}dx,\ p+q= n,\ 0< p,q< n,\ a,b\in B_2.\]
% A substitution $x=|a-b| z+a$ gives
% \[J_T(a,b)=\int_{B_{\frac{2}{|a-b|}}(a)}\frac{1}{|z|^p|z+e(a,b)|^q}dz\leq C(n,p,q)|\log|a-b||,\]
% and we also have
% \[\bigg|\int_{\mathbb R^n\setminus B_2}\frac{U^{p-1}}{|x-a|^p|x-b|^q}dx\bigg|\leq C.\]}
%For $n=3$, using the Riemann sum, one has
%\[\bigg|\sum_{j=2}^k \log|\bar\xi_1-\hat\xi_j|\bigg|\leq Ck\int_{0}^1|\log (d^2+4\sin^2 \pi s)|ds\leq Ck,\]
%also for $n\geq 4$,
% we obtain
% \begin{align*}
%     \bigg|\int B_{1}(x,y)\bigg(1-\sum_{i=1}^k\bar\zeta_i\bigg)U^{p-1}\hat U_j\bigg|\leq \begin{cases}
%         C\mu^{\frac n2}|\bar\xi_1-\hat\xi_j|^{3-n}|y|&\text{if }n\geq 4,\\
%         C\mu^{\frac32}\log|\bar\xi_1-\hat\xi_j||y|&\text{if }n=3.
%     \end{cases}
% \end{align*}
we have
\begin{align*}
    |I_3|\leq C\sum_{i=1}^k\mu^{\frac{n-2}{2}}|\bar\xi_1-\hat\xi_j|^{3-n}\text{ if  }n\geq 4,\quad |I_3|\leq C\sum_{i=1}^k\mu^{\frac{n-2}{2}}\log |\bar\xi_1-\hat\xi_j|\text{ if  }n=3
\end{align*}
Then a simple summation using \eqref{sumB} gives 
\begin{equation*}
    |I_3|\leq
    \begin{cases}
    C\log^{-1}k,&\text{if } n=3\\
    Ck^{-1}\log\log k,&\text{if }n=4\\
    Ck^{3-n}d^{4-n},&\text{if } n\geq5
    \end{cases}
    =o_k(1)d.
\end{equation*}

For $I_4$, one can use the estimate the pointwise relationships on the support of $\Lambda$,
\[\hat U_j^p\leq\frac{\lambda^{\frac{n+2}{2}}}{|x-\hat\xi_j|^{n-1}},\ \bar U_j^p\leq\frac{\mu^{\frac{n+2}{2}}}{|x-\bar\xi_j|^{n-2}}.\]
This is reduced to the previous analysis since 
\begin{align*}
    |I_4|\leq C\int_{|x-\bar\xi_1|>\delta/k}\frac{\mu^{\frac{n+2}{2}}}{|x-\bar \xi_1|^{2n-3}}+C\int \frac{1}{|x-\bar \xi_1|^{n-1}}\Big(\frac{\mu^{\frac{n+2}{2}}}{|x-\bar \xi_j|^{n-2}}+\frac{\lambda^{\frac{n+2}{2}}}{|x-\hat \xi_j|^{n-2}}\Big)
\end{align*}
Finally, $\int B_1\Lambda |E-\tilde E|$ can be estimated via similar approach for $I_1,I_2,I_3,I_4$.
\end{proof}
%%%%%%%%%%%%%%%%%%%%%%%%%%%%%%%%%%%%%%%%%%%%%%%%%%%%%

\section{Balancing the parameters}
We need to adjust the value of the coefficients to meet the condition of solvability. In fact, this problem is thus reduced to solving a finite-dimensional problem, namely
\begin{equation}
\int (\bar\zeta_1 E+\gamma\bar{\mathcal N}(\bar\phi_1,\hat\phi_1,\phi))\bar Z_1=0;\quad\int (\bar\zeta_1 E+\gamma\bar{\mathcal N}(\bar\phi_1,\hat\phi_1,\phi))\bar Z_{n+1}=0.
\end{equation}

To solve the system, deriving the asymptotic formulae for both integrals is necessary, which spell as
\begin{equation}
    \int (\bar\zeta_{1} E+\gamma\bar{\mathcal N}(\bar\phi_1,\hat\phi_1,\phi))\bar Z_{n+1}=\left\{
\begin{aligned}
k^{2-n}l((-C_{1,n}l+C_{3,n})+\Theta_{k}(l,t)),\ \ \ \ \ \ \ \ \ &n\geq4;\\
(k\log k)^{-1}l((-C_{1,3}l+C_{3,3})+\Theta_{k}(l,t)),\ &n=3,
\end{aligned}
\right.
\end{equation}
\begin{equation}
 \int (\bar\zeta_1 E+\gamma\bar{\mathcal N}(\bar\phi_1,\hat\phi_1,\phi))\bar Z_1=\left\{
\begin{aligned}
   k^{-n-\frac{n-3}{n-1}}tl^{\frac n{n-2}}((C_{4,n}-C_{5,n}t^{1-n}\hat l)+\Theta(l,t)),\ n\geq4;\\
   k^{-3}\log^{-\frac72}tl^3((C_{4,3}-C_{5,3}t^{-2}\hat l)+\Theta(l,t)),\ \ n=3;
\end{aligned}
\right.
\end{equation}
where $\Theta_k$ are generic functions that uniformly converge to 0 as $k\to\infty$. Invoking the fixed point theorem of subsets of $\mathbb{R}^2$, also the remark 11 at the end of this section, the main theorem follows. 

Now we turn to the proof of the aforementioned asymptotic formulae. In the proofs given below, an more precise version of summing formula, given by \cite{MEDINA2021} in Appendix B, is frequently used, that is
\begin{equation}
\sum_{j=2}^k\bigg|\frac{\bar\xi_1-\bar\xi_j}{r}\bigg|^{2-n}=\left\{
\begin{aligned}
A_nk^{n-2}(1+O(\sigma_k)),\ \ \ n\geq4;\\
A_3k\log k(1+O(\sigma_k)),\ n=3,
\end{aligned}\right.
\ \ \sigma_k=\left\{
\begin{aligned}
    k^{-2},\ \ \ \ \ \ \ \ \ n\geq6;\\
    k^{-2}\log k,\ n=5;\\
    k^{-1},\ \ \ \ \ \ \ \ \ n=4;\\
    \log^{-1}k,\ \ \ \ n=3,
\end{aligned}\right.
\end{equation}
where $A_n$ is a constant that only depends on $n$. Moreover, a similar summing formula is also introduced here,
\begin{equation}\label{sumB}
\sum_{j=1}^k|\bar\xi_1-\hat\xi_j|^{2-n}=\left\{
\begin{aligned}
B_{n,r,R}kd^{3-n}(1+O(\omega_k)),\ \ \ \ \ \ \ \ \ \ \ \ \ \ \ n\geq4;\\
B_{3,r,R}k\log \Big(\frac\pi d\Big)(1+O(\log^{-1}d)),\ n=3,
\end{aligned}
\right.
\ \ \ \omega_k=\left\{
\begin{aligned}
(dk)^{-2},\ n\geq5;\\
d,\ \ \ \ \ \ \ \ \ \ n=4.
\end{aligned}
\right.
\end{equation}
where $B_{n,r,R}$ is a constant that only depends on $n,r,R$.
We start from the easier one.\\
\textit{Proof of Formula} (44). For (44), we are ready to directly derive the formula.
Now assume $n\geq4$. The numerator is
\[\int (\bar\zeta_1 E+\gamma\bar{\mathcal N}(\bar\phi_1,\hat\phi_1,\phi))\bar Z_{n+1}=\int E\bar Z_{n+1}-\int(1-\bar\zeta_1)E\bar Z_{n+1}+\gamma\int\bar{\mathcal N}(\bar\phi_1,\hat\phi_1,\phi)\bar Z_{n+1}.\]
The first term, as was treated in Section 3, can be divided as follows:
\[\int E\bar Z_{n+1}=\left(\int_{\bar B_1}+\sum_{j\neq1}\int_{\bar B_j}+\sum_{j=1}^k\int_{\hat B_j}+\int_{\text{Ext}}\right)E\bar Z_{n+1}.\]
We will show that the first term is the main term. After changing the variables as in Section 3, we arrive at a easier form for computation. By denoting 
\[V(y)=-\sum_{j\neq1}\bar{\bar U}_j-\sum_{j=1}^{k}\bar{\hat U}_j+\bar{\bar U}\]
where
\[\bar{\bar U}_j(y)=\mu^{\frac{n-2}2}\bar U_j(\mu y+\bar\xi_1);\ \bar{\hat U}_j(y)=\mu^{\frac{n-2}2}\hat U_j(\mu y+\bar\xi_1);\ \bar{\bar U}=\mu^{\frac{n-2}2}U(\mu y+\bar\xi_1),\]
we have 
\begin{equation}
\begin{aligned}
&\quad\gamma^{-1}\int_{\bar B_1}E\bar Z_{n+1}=\gamma^{-1}\mu^{\frac{n-2}{2}}\int_{B(0,\frac{\delta}{k\mu})}E(\mu y+\bar\xi_1) Z_{n+1}\\
&=p\sum_{j\neq1}\int_{B(0,\frac{\delta}{k\mu})}U^{p-1}\bar{\bar U}_jZ_{n+1}+p\sum_{j=1}^{k}\int_{B(0,\frac{\delta}{k\mu})}U^{p-1}\bar{\hat U}_jZ_{n+1}\\
&\quad -p\int_{B(0,\frac{\delta}{k\mu})}U^{p-1}\bar{\bar U}Z_{n+1}+p\int_{B(0,\frac{\delta}{k\mu})}((U+sV)^{p-1}-U^{p-1})VZ_{n+1}\\
&\quad +\sum_{j\neq1}\int_{B(0,\frac{\delta}{k\mu})}\bar{\bar U}_j^pZ_{n+1}+\sum_{j=1}^{k}\int_{B(0,\frac{\delta}{k\mu})}\bar{\hat U}_j^pZ_{n+1}+\int_{B(0,\frac{\delta}{k\mu})}\bar{\bar U}^pZ_{n+1}.
\end{aligned}
\end{equation}
We will show that the main terms are the first and the third terms. To simplify the notations, we denote $I_1:=-\int U^{p-1}Z_{n+1}$, then a simple calculation shows that $$\int_{B(0,\frac\delta{\mu k})} U^{p-1}Z_{n+1}=-I_1+O(k^{-2}),$$
and we have
\begin{equation}
\begin{aligned}
\sum_{j=2}^k\int_{B(0,\frac{\delta}{k\mu})}U^{p-1}\bar{\bar U}_jZ_{n+1}=-2^{\frac{n-2}2}\mu^{n-2}I_1\bigg(\sum_{j=2}^k\frac{1+\mu^2|\bar\xi_1-\bar\xi_j|^{-2}O(\log k)}{|\bar\xi_1-\bar\xi_j|^{n-2}}\bigg);\\
\sum_{j=1}^k\int_{B(0,\frac{\delta}{k\mu})}U^{p-1}\bar{\hat U}_jZ_{n+1}=-C_2(\mu\lambda)^{\frac{n-2}2}\bigg(\sum_{j=1}^k\frac{1+\mu^2|\bar\xi_1-\hat\xi_j|^{-2}O(\log k)}{|\bar\xi_1-\hat\xi_j|^{n-2}}\bigg);\ \\
\int_{B(0,\frac{\delta}{k\mu})}U^{p-1}\bar{\bar U}Z_{n+1}=-\mu^\frac{n-2}2U(\bar\xi_1)(I_1+O(k^{-2}))\bigg(1+\mu^2O(\log k)\bigg).\ \ \ \ \ \ \ 
\end{aligned}
\end{equation}
These three formulae are direct results from the asymptotic formulae and the summation formulae. Nota bene, the second term reads
\begin{equation*}
\begin{aligned}
&\bigg|\sum_{j=1}^k\int_{B(0,\frac{\delta}{k\mu})}U^{p-1}\bar{\hat U}_jZ_{n+1}\bigg|=C_2(\mu\lambda)^{\frac{n-2}2}\bigg(\sum_{j=1}^k\frac{1+\mu^2|\bar\xi_1-\hat\xi_j|^{-2}O(\log k)}{|\bar\xi_1-\hat\xi_j|^{n-2}}\bigg)\\
=&\ Ck^{5-2n}d^{3-n}+o(k^{5-2n}d^{3-n})=o(k^{2-n}).
\end{aligned}
\end{equation*}
Therefore, only the first and the third terms are the main terms. We now derive finer expression of the coefficients for both expansions. For the first term
\begin{align*}
&\ \ \sum_{j\neq1}\int_{B(0,\frac{\delta}{k\mu})}U^{p-1}\bar{\bar U}_jZ_{n+1}=-2^{\frac{n-2}2}\mu^{n-2}I_1\sum_{j\neq1}\frac{1+\mu^{2}|\bar\xi_1-\bar\xi_j|^{-2}O(k\log k)}{|\bar\xi_1-\bar\xi_j|^{n-2}}\\
=&-2^{\frac{n-2}2}A_nI_1l^2k^{2-n}r^{n-2}(1+O(k^{-2}\log k))(1+O(\sigma_k))\\
=&-(1+O(k^{-2}\log k)+O(\sigma_k))
\end{align*} 
The third is 
\begin{align*}
&\int_{B(0,\frac{\delta}{k\mu})}U^{p-1}\bar{\bar U}Z_{n+1}=-\mu^\frac{n-2}2U(\bar\xi_1)I_1(1+\mu^2(1+|\bar\xi_1|^2)^{-1}\Theta_{k}(l,t))\\
=&-U(\bar\xi_1)lI_1k^{2-n}(1+O(k^{-2}))(1+O(k^{-4}\log k))=-U(\bar\xi_1)lI_1k^{2-n}(1+O(k^{-2})).
\end{align*}
We denote
\[C_{1,n}=2^{\frac{n-2}2}A_nI_1l^2,\ C_{3,n}=U(\textbf{e}_1)lI_1.\]
A simple application of the summation we used before, as well as the main term of the asymptotic formulae, we have 
\small{
\begin{equation*}
\sum_{j\neq1}\int_{B(0,\frac{\delta}{k\mu})}\bar{\bar U}_j^pZ_{n+1}=O(k^{-n});\ 
\sum_{j=1}^{k}\int_{B(0,\frac{\delta}{k\mu})}\bar{\hat U}_j^pZ_{n+1}=O(k^{-n});\ 
\int_{B(0,\frac{\delta}{k\mu})}\bar{\bar U}^pZ_{n+1}=O(k^{-n}).
\end{equation*}}
Applying the three formulae above yields
\begin{equation}
p\bigg|\int_{B(0,\frac{\delta}{k\mu})}((U+sV)^{p-1}-U^{p-1})VZ_{n+1}\bigg|=O(k^{-n}).
\end{equation}
For the last term, we process as in \cite{MEDINA2021}. Using the norm estimates in Section 4, and the fact that
\[|Z_{n+1}|\leq CU,\]
also H\"older inequality, we arrive at 
\[\left|\int_{\text{Ext}}E\bar Z_{n+1}\right|=O(k^{1-n});\ 
\left|\sum_{j\neq1}\int_{\bar B_i}E\bar Z_{n+1}\right|=O(k^{1-n});\ 
\left|\sum_{j=1}^k\int_{\hat B_j}E\bar Z_{n+1}\right|=O(k^{1-n}).\]
Combining the previous calculation, we arrive at
\begin{equation}
    \int E\bar Z_{n+1}=k^{2-n}l((-C_{1,n}l+C_{3,n})+\Theta_{k}(l,t)),
\end{equation}
which is, as we will prove below, the main term of the second integration.
For the second term, we have
\[\left|\int(\bar\zeta_1-1)E\bar Z_{n+1}\right|\leq C\left|\int_{\bar B_1^C} E\bar Z_{n+1}\right|.\]
Dividing the domain into 3 different parts as in Section 3, and plugging in the calculation of the last part of the first term, we arrive at 
\[\left|\int(\bar\zeta_1-1)E\bar Z_{n+1}\right|=O(k^{1-n})\Theta_k(l,t).\]
For the third term, using the decomposition in the proof for Proposition 6, we arrive at
\[\left|\int \bar{\mathcal N}\bar Z_{n+1}\right|\leq Ck^{3-n-\frac nq}=o(k^{1-n})\Theta_{k}(l,t).\]

Now that we have concluded the proof for $n\geq4$, we will deal with the cases in which $n=3$. The idea is to use the estimates for 3D instead of the one we used before, and the process repeats the one we performed before. The only difference lies in the last term, where we only need to apply H\"older inequality after changing variables to obtain the result, that is 
\[\left|\int \bar{\mathcal N}\bar Z_{n+1}\right|\leq C||\bar{\bar {\mathcal N}}||_{**}\bigg(\int \frac{1}{1+|y|^{2n}}\bigg)^{1-\frac1q}\leq Ck^{-2}\log^{-2}k.\]

So far, we may be able to solve $l$ from the expansion. In fact, a simple calculation gives the solution 
\[l=2^{\frac{2-n}2}U(\bar\xi_1)A_n^{-1}r^{{2-n}}(1+o(1)).\]
\noindent\textit{Proof of Formula} (45).\par
For the formula, the proof highly relies on the symmetry decomposition.
The numerator is
\[\int (\bar\zeta_1 E+\gamma\bar{\mathcal N}(\bar\phi_1,\phi))\bar Z_1=\int E\bar Z_1-\int(1-\bar\zeta_1)E\bar Z_1+\gamma\int\bar{\mathcal N}(\bar\phi_1,\phi)\bar Z_1.\]
The first term, as was treated in section 3, can be divided as follows:
\[\int E\bar Z_{1}=\left(\int_{\bar B_1}+\sum_{j\neq1}\int_{\bar B_j}+\sum_{j=1}^k\int_{\hat B_j}+\int_{\text{Ext}}\right)E\bar Z_{1}.\]
Proceeding as in the previous part, we have 
\begin{equation}
\left|\int_{\text{Ext}}E\bar Z_{1}\right|=o(k^{-1-n});\ 
\left|\sum_{j\neq1}\int_{\bar B_i}E\bar Z_{1}\right|=O(k^{-1-n});\ 
\left|\sum_{j=1}^k\int_{\hat B_j}E\bar Z_{1}\right|=o(k^{-1-n}).
\end{equation}
Now we turn to the main term. By variable changing, we have
\begin{equation}
\begin{aligned}
\quad\gamma^{-1}\int_{\bar B_1}E\bar Z_{1}=&\gamma^{-1}\lambda^{\frac{n-2}{2}}\int_{B(0,\frac{\delta}{k\mu})}E(\mu y+\bar\xi_1) Z_{1}\\
=&\ \ p\sum_{j\neq1}\int_{B(0,\frac{\delta}{k\mu})}U^{p-1}\bar{\bar U}_jZ_{1}+p\sum_{j=1}^{k}\int_{B(0,\frac{\delta}{k\mu})}U^{p-1}\bar{\hat U}_jZ_{1}\\
&-p\int_{B(0,\frac{\delta}{k\mu})}U^{p-1}\bar{\bar U}Z_{1}+p\int_{B(0,\frac{\delta}{k\mu})}((U+sV)^{p-1}-U^{p-1})VZ_{1}\\
&+\sum_{j\neq1}\int_{B(0,\frac{\delta}{k\mu})}\bar{\bar U}_j^pZ_{1}+\sum_{j=1}^{k}\int_{B(0,\frac{\delta}{k\mu})}\bar{\hat U}_j^pZ_{1}+\int_{B(0,\frac{\delta}{k\mu})}\bar{\bar U}^pZ_{1}.
\end{aligned}
\end{equation}
In fact, unlike the previous calculation, all the first three terms here are the main terms, as we will check next.
We first notice that for a $L^2$ function $g$ and a vector $v$, we have
\[\int_{B_a}(y,v)Z_1(y)g(y)=\int_{B_a}v_1y_1Z_1(y)g(y).\]
Also, we have
\[(\bar\xi_1-\bar\xi_n)_1=r-r\cos{\frac{2\pi(j-1)}{k}}=2r\sin^2\frac{\pi(j-1)}{k},\]
\[\left|\bar\xi_1-\bar\xi_n\right|^2=\left|r-r\exp{\frac{2i\pi(j-1)}{k}}\right|^2=4r^2\sin^2\frac{\pi(j-1)}{k}.\]
From now on, we denote $I_2:=-\int U^{p-1}Z_1y_1$, and we have 
\[-\int_{B(0,\frac{\delta}{k\mu})}U^{p-1}Z_1y_1=I_2+O(k^{-2})\]
We process the first and the third term first, for some reasons that will be clear below. For the first term, we have 
\begin{equation*}
\begin{aligned}
&\sum_{j\neq1}\int_{B(0,\frac{\delta}{k\mu})}U^{p-1}\bar{\bar U}_jZ_{1}\\
=\ &\sum_{j\neq1}\int_{B(0,\frac{\delta}{k\mu})}U^{p-1}Z_1\frac{2^\frac{n-2}{2}\mu^\frac{n-2}{2}\mu^\frac{n-2}{2}}{|\bar\xi_1-\bar\xi_j|^{n-2}}\left(-\frac{(n-2)(y,\bar\xi_1-\bar\xi_j)\mu}{|\bar\xi_1-\bar\xi_j|^2}+O\left(\frac{\mu^2(1+|y|^2)}{|\bar\xi_1-\bar\xi_j|^2}\right)\right)\\
=\ &\sum_{j\neq1}\int_{B(0,\frac{\delta}{k\mu})}-(n-2)U^{p-1}Z_1y_1\frac{2^\frac{n-2}{2}\mu^{n-1}}{|\bar\xi_1-\bar\xi_j|^{n-2}}\left(\frac1{2r}+O\left(\frac{\mu^2(1+|y|^2)}{|\bar\xi_1-\bar\xi_j|^2}\right)\right)\\
=\ &(n-2)2^{\frac{n-4}2}r^{n-3}A_nI_2l^{\frac{n}{n-2}}lk^{-n}(1+O(k^{-2}\log k)+O(\sigma_k)).
\end{aligned}
\end{equation*}
The last term directly reads
\begin{align*}
\int_{B(0,\frac{\delta}{k\mu})}U^{p-1}\bar{\bar U}Z_{1}&=\frac{(n-2)\mu^{\frac n2} r}{1+r^2}U(\bar\xi_1)\int_{B(0,\frac{\delta}{k\mu})} -U^{p-1}y_1Z_{n+1}(1+O(k^{-4}\log k))\\
&=\frac{(n-2)\mu^{\frac n2} rI_2}{1+r^2}U(\bar\xi_1)(1+O(k^{-2})+O(k^{-4}\log k))\\
&=\frac{(n-2)l^{\frac n{n-2}} rI_2}{1+r^2}U(\bar\xi_1)k^{-n}(1+O(k^{-2})).
\end{align*}
Recall that the value of $l$ has been calculated before, then it is easy to verify that the difference between the coefficients of $l^{\frac n{n-2}}k^{-n}$ is
$$\frac{(n-2)2^{\frac{n-2}2}r^{n-2}A_nI_2l}{2r}-\frac{(n-2) rI_2}{1+r^2}U(\bar\xi_1)=\frac{n-2}{2r}U(\bar\xi_1)I_2\frac{1-r^2}{1+r^2}(1+o(1)).$$
By invoking Remark 3, we see that $(1-r^2)/(1+r^2)\sim d/2$. That is, we have 
\[\sum_{j\neq1}\int_{B(0,\frac{\delta}{k\mu})}U^{p-1}\bar{\bar U}_jZ_{1}-\int_{B(0,\frac{\delta}{k\mu})}U^{p-1}\bar{\bar U}Z_{1}=C_{4,n}l^{\frac{n}{n-2}}tk^{-n-\frac{n-3}{n-1}}(1+o(1)),\]
where \(C_{4,n}=\frac{n-2}{4}U(\textbf{e}_1)I_2\)
For the second term, the cosine law gives
\[(\bar\xi_1-\hat\xi_n)_1=r-R\cos{\frac{2\pi(j-1)}{k}}=2R\sin^2\frac{\pi(j-1)}{k}-d,\]
\[\left|\bar\xi_1-\hat\xi_n\right|=\left|r-R\exp{\frac{2i\pi(j-1)}{k}}\right|^2=d^2+4Rr\sin^2 \frac{\pi(j-1)}{k}.\]
Plugging these quantities into the expansion, we have
\begin{equation*}
\begin{aligned}
&\sum_{j=1}^k\int_{B(0,\frac{\delta}{k\mu})}U^{p-1}\bar{\hat U}_jZ_{1}\\
=\ &\sum_{j=1}^k\int_{B(0,\frac{\delta}{k\mu})}U^{p-1}Z_1\frac{2^\frac{n-2}{2}\lambda^\frac{n-2}{2}\mu^\frac{n-2}{2}}{|\bar\xi_1-\hat\xi_j|^{n-2}}\left(-\frac{(n-2)(y,\bar\xi_1-\hat\xi_j)\mu}{|\bar\xi_1-\hat\xi_j|^2}+O\left(\frac{\mu^2(1+|y|^2)}{|\bar\xi_1-\hat\xi_j|^2}\right)\right)\\
=\ &-\sum_{j=1}^k\int_{B(0,\frac{\delta}{k\mu})}(n-2)U^{p-1}Z_1y_1\frac{2^\frac{n-2}{2}\lambda^\frac{n-2}{2}\mu^\frac{n}{2}}{(d^2+4Rr\sin^2 \frac{\pi(j-1)}{k})^{\frac{n-2}2}}\times\\
&\qquad\qquad\qquad\qquad\qquad\qquad\qquad\times\left(\frac{2R\sin^2\frac{\pi(j-1)}{k}-d}{d^2+4Rr\sin^2 \frac{\pi(j-1)}{k}}+O\left(\frac{\mu^2(1+|y|^2)}{|\bar\xi_1-\hat\xi_j|^2}\right)\right)\\
=\ &-C_{5,n}\mu^{\frac n2}\lambda^{\frac{n-2}2}kd^{2-n}+o(k^{-n-\frac{n-3}{n-1}})=-C_{5,n}t^{2-n}l^{\frac{n}{n-2}}\hat lk^{-n-\frac{n-3}{n-1}}+o(k^{-n-\frac{n-3}{n-1}}).
\end{aligned}
\end{equation*}
The last equality is obtained via the formula \eqref{sumB}. Extra caution is required to determine the sign of this term, as the dominant term in the brackets tends to $-d(d^2+4Rr\sin^2((j-1)\pi/k))^{-1}$ as $d\to0$, which is negative. 

Similar to the calculation above, we may write down the higher order terms,
\small{
\begin{equation*}
\sum_{j\neq1}\int_{B(0,\frac{\delta}{k\mu})}\bar{\bar U}_j^pZ_{1}=O(k^{-n-1});\ 
\sum_{j=1}^{k}\int_{B(0,\frac{\delta}{k\mu})}\bar{\hat U}_j^pZ_{1}=O(k^{-n-1});\ 
\int_{B(0,\frac{\delta}{k\mu})}\bar{\bar U}^pZ_{1}=O(k^{-n-1}).
\end{equation*}
}
Applying the three formulae above yields
\begin{equation*}
p\bigg|\int_{B(0,\frac{\delta}{k\mu})}((U+sV)^{p-1}-U^{p-1})VZ_{1}\bigg|=O(k^{-n-1}).
\end{equation*}

Combining the previous proof, and repeating the process for the case $n=3$, we arrive at
\begin{equation}
 \int E\bar Z_1=\left\{
\begin{aligned}
   &k^{-n-\frac{n-3}{n-1}}tl^{\frac n{n-2}}((C_{4;6,n}-C_{5,n}t^{1-n}\hat l)+\Theta(l,t)),\ n\geq4;\\
   &k^{-3}\log^{-\frac72}tl^3((C_{4;6,3}-C_{5,3}t^{-2}\hat l)+\Theta(l,t)),\ \ n=3.
\end{aligned}
\right.
\end{equation}

For the second term, we have
\[\left|\int(\bar\zeta_1-1)E\bar Z_{n+1}\right|\leq C\left|\int_{\bar B_1^C} E\bar Z_{n+1}\right|.\]
By separating the domain, and plugging in (52), we have 
\[\left|\int_{\bar B_1^C} E\bar Z_{n+1}\right|\leq O(k^{-1-n}),\]
which gives the desired estimate.

We now proceed to handle the final term by applying the results from Section 5. Following the approach in Section 4, we divide $\bar{\mathcal N}$ into four parts
\[\bar{\mathcal N}(\bar\phi_1,\phi)=f_1+f_2+f_3+f_4.\]
We assume that $n\geq 5$ first. 
Assuming first that $n\geq 5$, we estimate $f_1$. By a change of variable and a pointwise estimate of $f_1$, we obtain
\[\left|\int f_1(y)\bar Z_1(y)\right|\leq C\int_{B_{\delta (k\mu)^{-1}}}U^{p-2}|Z_1||\bar{\bar\phi}_1|.\]
Also, in this region, we have $\sqrt \mu\leq C|y|^{-1}$, we may rewrite the estimate for $\bar{\bar\phi}_1$ as 
\[|\bar{\bar\phi}_1|\leq C \mu^{\beta}(1+|y|^{n-2\beta})\]
where $\beta:=\frac32-\frac1{n-1}+\epsilon$, and $\epsilon$ is a small positive number. Plugging the estimate into the formula, we have
\[\left|\int f_1(y)\bar Z_1(y)\right|\leq Ck^{2-n-2\epsilon}\int\frac{|y_1|}{(1+|y|)^{n+6-2\beta}}\leq Ck^{2-n-2\beta}.\]
Therefore, this is a higher order term compared to the main term.

Analogously, we have 
\[\left|\int f_2(y)\bar Z_1(y)\right|\leq C\int_{B^C_{2\delta (k\mu)^{-1}}}U^{p-2}|Z_1||\bar{\bar\phi}_1|\leq Ck^{-(n+2)}.\]

The rest relies on the decomposition given in section 5. For $f_3$, the pointwise estimates and decomposition, also $\|\cdot\|_{**}$-norm estimates of $\psi$ and $\phi$, yields
\begin{equation*}
\begin{aligned}
&\left|\int f_3(y)\bar Z_1(y)\right|\leq C\mu^{\frac{n-2}2}\left|\int_{B_{\delta (k\mu)^{-1}}} U^{p-1}\psi(\bar\xi_1+\mu y)Z_1\right|\\
\leq&~C\mu^{\frac{n-2}2}\left|\int_{B_{\delta (k\mu)^{-1}}} U^{p-1}\psi^s(y)Z_1\right|+C\mu^{\frac{n-2}2}\left|\int_{B_{\delta (k\mu)^{-1}}} U^{p-1}\psi^*(y)Z_1\right|\\
=&~C\mu^{\frac{n-2}2}\left|\int_{B_{\delta (k\mu)^{-1}}} U^{p-1}\psi^*(y)Z_1\right|\\
\leq&~C\mu^\frac n2(\|\bar{\bar\phi}_1\|_{*}+\|\psi\|_*+do(1))\int U^{p-1}y_1Z_1\leq Ck^{1-n-\frac nq}
\end{aligned}
\end{equation*}
where $\epsilon$ is a positive number related to $q$ and $n$.\par
For $f_4$, since $\phi<<u_*$, we have 
\(f_4\leq C\bar\zeta_1|u_*|^{p-2}\phi^2.\) Thus the estimate is given by
\begin{equation*}
\begin{aligned}
&\left|\int f_4(y)\bar Z_1(y)\right|\leq \int CU_1^{p-2}\bar\zeta_1\left|\sum_{j=1}^k(\bar\phi_j+\hat\phi_j)+\psi\right|^2\bar Z_1\\
\leq\ &C\left\|\bar\zeta_1\bigg(\frac{y-\bar\xi_1}{\mu}\bigg)(\bar{\bar\phi}_1+\bar\psi)\right\|_*\int _{B_{\delta (k\mu)^{-1}}}CU^{p-1}\left|\bar{\bar\phi}_1+\bar\psi\right|Z_1\\
\leq\ &Ck^{-\frac nq}\int _{B_{\delta (k\mu)^{-1}}}U^{p-1}\left(|\bar{\bar{\phi}}_1^*|+\mu^{\frac{n-2}2}|\psi^*|\right) Z_1\leq Ck^{1-n-\frac nq}
\end{aligned}
\end{equation*}
Therefore, by choosing proper $q$, we have $\left|\int f_4(y)\bar Z_1(y)\right|=o(k^{1-n-\frac nq})$.
Now that we have concluded the proof for $n\geq5$, we will deal with the cases in which $n=3,4$.
For $f_3,f_4$, the proof is identical to that presented in \cite{MEDINA2021}. The differences are that different estimates, i.e., those for the lower dimensions, as well as the summation formulae, shall be used instead of the one we used in the previous proof, while the proof for symmetry decomposition is also used. We omit it here.

Finally, we only need to show that the solution achieves maximal rank in dimension 3. The calculation is simply a repeat of the calculation in \cite{MEDINA2021}, Appendix B. In fact, the proof shows that 
\[\dim T_u(\text{Orb}_Q(\text{Mob}(n)))=4n-2.\]

\begin{remark}
The final proof for the $u_{*,0}$ case only needs a tautological process with the by substituting the parameter of $u_{*,1}$ by that of $u_{*,0}$, and the process are identical. 
\end{remark}

\section*{Acknowledgment}

The research of L. Sun is partially supported by Strategic Priority Research Program of the Chinese Academy of Sciences (No.\,XDB0510201), CAS Project for Young Scientists in Basic Research Grant (No.\,YSBR-031), National Key R\&D Program of China (No.\,2022YFA1005601), NSFC China (No.\,12471115).

\bibliographystyle{plainnat}
\bibliography{ref}

\end{document}